	\theoremstyle{plain}
	\newtheorem{theorem}{Theorem}[section]
	\newtheorem{lemma}[theorem]{Lemma}
	\newtheorem{proposition}[theorem]{Proposition}
	\theoremstyle{definition}
	\newtheorem{definition}[theorem]{Definition}
	\newtheorem{example}[theorem]{Example}
\newcommand{\Av}{\operatorname{Av}}
\newcommand{\Inv}{\operatorname{Inv}}
\newcommand{\terms}{\operatorname{terms}}
\newcommand{\st}{\operatorname{st}}
\newcommand{\K}{\mathbb{K}}
\newcommand{\includesash}[2]{\raisebox{#1}{\includegraphics{#2}}}
\newcommand{\nobox}{\begin{picture}(5,15) \put(0,1){\textbardbl} \end{picture}}
\title[Hopf Algebra of Sashes]{Combinatorial Realization of the \\
Hopf Algebra of Sashes}
\author{Shirley Law}
\thanks{This research was partially funded by NSF grant DMS-1101568.}
\begin{document}

\begin{abstract}
A general lattice theoretic construction of Reading constructs Hopf subalgebras of the Malvenuto-Reutenauer Hopf algebra (MR) of permutations. 
The products and coproducts of these Hopf subalgebras are defined extrinsically in terms of the embedding in MR. 
The goal of this paper is to find an intrinsic combinatorial description of a particular one of these Hopf subalgebras.  
This Hopf algebra has a natural basis given by permutations that we call Pell permutations. 
The Pell permutations are in bijection with combinatorial objects that we call sashes, that is, tilings of a 1 by n rectangle with three types of tiles: black 1 by 1 squares, white 1 by 1 squares, and white 1 by 2 rectangles.  
The bijection induces a Hopf algebra structure on sashes. 
We describe the product and coproduct in terms of sashes, and the natural partial order on sashes. 
We also describe the dual coproduct and dual product of the dual Hopf algebra of sashes.
\end{abstract}
\maketitle

\section{Introduction}
\label{intro}

The focus of this research is on combinatorial Hopf algebras: Hopf algebras such that the basis elements of the underlying vector space are indexed by a family of combinatorial objects.
For each $n \geq 0$, let $O_n$ be a finite set of ``combinatorial objects''.
We define a graded vector space over a field $\K$, such that for each grade $n$ the basis vectors of the vector space are indexed by the elements of $O_n$.
That is, the graded vector space is: $\K[O_{\infty}]=\bigoplus_{{n}\ge0}\K[O_{n}]$.
For simplicity, we refer to a basis element of this vector space by the combinatorial object indexing it.
There is a more sophisticated approach for defining combinatorial Hopf algebras.
For more information see~\cite{AgBSottile}.

Let $S_{n}$ be the group of permutations of the set of the first $n$ integers $[{n}] = \left\{ 1, 2, \ldots, {n} \right\}$.  
Also define $[{n},{n}'] = \left\{ {n}, {n}+1, \ldots, {n}' \right\}$ for ${n}' \geq {n}$.  
For $x = x_1 x_2 \cdots x_{n} \in S_{n}$, an \emph{inversion} of $x$ is a pair $(x_i, x_j)$ where $i<j$ and $x_i>x_j$, and the \emph{inversion set of $x$} is the set of all such inversions.  
The weak order is the partial order on $S_{n}$ with $x\leq x'$ if and only if the inversion set of $x$ is contained in the inversion set of $x'$.   
The weak order is a lattice.  
The inverse $x^{-1}$ of a permutation $x \in S_{n}$ is the permutation $x^{-1}=y= y_1 \cdots y_n \in S_{n}$ such that $y_i = j$ when $x_j = i$.  

Let $T$ be a set consisting of integers $t_1 < t_2 < \cdots < t_{n}$.  
Given a permutation $x \in S_{n}$, the notation $( x )_{T}$ stands for the permutation of $T$ whose one-line notation has $t_j$ in the $i^{th}$ position when $x_i=j$.
On the other hand, given a permutation $x$ of $T$, the \emph{standardization}, $\st(x)$, is the unique permutation $y \in S_n$ such that $(y)_T=x$.

Now let $T$ be a subset of $[{n}]$.  For $x \in S_{n}$, the permutation $x|_T$ is the permutation of $T$ obtained by removing from the one-line notation for $x$ all entries that are not elements of $T$.

\begin{example} 
Let $x = 31254$, $T_1 = \{ 2,3,6,8,9 \}$, and $T_2 = \{ 2,3,5 \}$.  
Then, $(x)_{T_1} = 62398$ and thus $\st(62398)=31254$.
Also, $x|_{T_2} = 325$.
\end{example}

The Malvenuto-Reutenauer Hopf algebra MR is a graded Hopf Algebra denoted by $(\K[S_\infty],\bullet,\Delta)$.  
Let $\K[S_{\infty}]=\bigoplus_{{n}\ge0}\K[S_{n}]$ be  a graded vector space.  
Let $x = x_1 x_2 \cdots x_p \in S_p$ and $y = y_1 y_2 \cdots y_q \in S_q$.  
Define $y' = y'_1 \cdots y'_q$ to be $( y )_{[p+1,p+q]}$  so that $y'_i = y_i+p$.  
A \emph{shifted shuffle} of $x$ and $y'$ is a permutation $z \in S_{n}$ where $n=p+q$, $z|_{[p]} = x$ and $z|_{[p+1,n]} = y'$.  
The product of $x$ and $y$ in MR is the sum of all the shifted shuffles of $x$ and $y$.
Equivalently, 
\begin{equation}
x \bullet y = \sum [x\cdot y', y'\cdot x]
\end{equation}
where $x\cdot y'$ is the concatenation of the permutations $x$ and $y'$, and $\sum[x\cdot y', y'\cdot x]$ denotes the sum of all the elements in the weak order interval $[x\cdot y', y'\cdot x]$.
A paper by Loday and Ronco~\cite{LR} introduces the representation of Hopf algebra operations in terms of sums over weak order intervals.  
The coproduct in MR is:
\begin{equation}
\Delta(x) = \sum_{i=0}^p \st(x_1\cdots x_i) \otimes \st(x_{i+1}\cdots x_p)
\end{equation}
where $\st(x_1\cdots x_0)$ and $\st(x_{p+1}\cdots x_p)$ are both interpreted as the empty permutation $\emptyset$.

Define the map $\Inv: S_{n} \to S_{n}$ by $\Inv(x) = x^{-1}$ and extend the map linearly to a map $\Inv: \K S_{\infty} \to \K S_{\infty}$.  
MR is known to be self dual~\cite{mr} and specifically $\Inv$ is an isomorphism from $(\K[S_\infty],\bullet,\Delta)$ to the graded dual Hopf algebra $(\K[S_\infty],\Delta^*, m^*)$.  
Let $x \in S_p$, $y \in S_q$, and $z \in S_{n}$, where $p+q=n$.  
Given a subset $T$ of $p$ elements of $[n]$, $T^C$ denotes the complement of $T$ in $[n]$.  
The dual product is given by:
\begin{equation} 
\label{MRdualprod} 
\Delta^*(x \otimes y) = \Inv(x^{-1} \bullet y^{-1}) = \sum_{\substack{T \subseteq [n], \\ |T|=p}} (x)_T \cdot (y)_{T^C}, 
\end{equation}
and the dual coproduct is:
\begin{equation} 
\label{MRdualcoprod} 
m^*(z) = (\Inv \otimes \Inv)(\Delta(z^{-1})) = \sum_{i=0}^{n} z|_{[i]} \otimes \st(z|_{[i+1,n]}) 
\end{equation}
where $z|_{[0]}$ and $z|_{[n+1,n]}$ are both interpreted as the empty permutation $\emptyset$.

Now that we have explicitly described both the Hopf algebra of permutations and the dual Hopf algebra of permutations, we will present a family of Hopf subalgebras that are defined by a particular pattern-avoidance condition.  
This family of Hopf algebras is defined by Reading~\cite{latcong}.

For some $k \geq 2$, let $V \subseteq [2, k-1]$ such that $|V|=j$ and let $V^C$ be the complement of $V$ in $[2, k-1]$.  
A permutation $x \in S_{n}$ \emph{avoids} the pattern $V(k1)V^C$ if for every subsequence $x_{i_1} x_{i_2} \cdots x_{i_k}$ of $x$ with $i_{j+2} = i_{j+1}+1$, the standardization $\st(x_{i_1} x_{i_2} \cdots x_{i_k})$ is not of the form $v(k1)v'$ for any permutation $v$ of the set $V$ and any permutation $v'$ of $V^C$.  
In the notation of Babson and Steingrimsson~\cite{permpat} avoiding $V(k1)V^C$ means avoiding all patterns of the form $v_1 - \cdots - v_j - k 1 - v'_1 - \cdots - v'_{k-j-2}$, where $v_1 \cdots v_j$ is a permutation of $V$ and $v'_1 \cdots v'_{k-j-2}$ is a permutation of $V^C$.

Let $U$ be a set of patterns of the form $V(k1)V^C$, where $|V|$ and $k$ can vary.  
Define $\Av_n$ to be the set of permutations in $S_n$ that avoid all of the patterns in $U$. 
We define a graded Hopf algebra $(\K[\Av_\infty],\bullet_{\Av},\Delta_{\Av})$ as a graded Hopf subalgebra of MR.  
Let $\K[\Av_n]$ be a vector space, over a field $\K$, with basis vectors indexed by the elements of $\Av_{n}$, and let $\mathbb{K}[\Av_{\infty}]$ be the graded vector space $\bigoplus_{{n}\ge0}\K[\Av_{n}]$.  
The product and coproduct on $\K[\Av_\infty]$ are described below.

We define a map $\pi_\downarrow: S_n \to \Av_n$ recursively.
If $x\in \Av_n$ then define $\pi_\downarrow(x)=x$.
If $x \in S_{n}$, but $x \notin \Av_{n}$, then $x$ contains an instance of a pattern $V(k1)V^C$ in $U$.  
That is, there exists some subsequence $x_{i_1} x_{i_2} \cdots x_{i_k}$ of $x$, where $i_{j+2} = i_{j+1}+1$ and $j = |V|$, such that $st(x_{i_1} x_{i_2} \cdots x_{i_k}) = v k 1 v'$ for some permutations $v$ and $v'$ of $V$ and $V^C$.  
Exchange $x_{i_{j+1}}$ and $x_{i_{j+2}}$ in $x$ to create a new permutation $x'$, calculate $\pi_\downarrow(x')$ recursively and set $\pi_\downarrow(x)=\pi_\downarrow(x')$.
The recursion must terminate because an inversion of $x$ is destroyed at every step, and because the identity permutation is in $\Av_n$.
The map $\pi_\downarrow$ is well-defined as explained in~\cite[Remark 9.5]{latcong}.  We emphasize that the definition of $\pi_\downarrow$ is dependent on $U$.  

The map $\pi_\downarrow$ defines an equivalence relation with permutations $x, x' \in S_{n}$ equivalent if and only if $\pi_\downarrow(x) = \pi_\downarrow(x')$.  
The set $\Av_{n}$ is a set of representatives of these equivalence classes. 
This equivalence relation is a lattice congruence on the weak order.
Therefore the poset induced on $\Av_n$ by the weak order is a lattice (also denoted by $\Av_n$) and the map $\pi_\downarrow$ is a lattice homomorphism from the weak order to $\Av_n$.
The congruence classes defined by $\pi_\downarrow$ are intervals, and $\pi_\downarrow$ maps an element to the minimal element of its congruence class.
Let $\pi^\uparrow$ be the map that takes an element to the maximal element of its congruence class.

The following proposition is a special case of \cite[Proposition~2.2]{latcong}.  
The congruence on $S_n$ defined by $\pi_\downarrow$ is denoted by $\Theta$.
For $x \in S_n$, the congruence class of $x$ mod $\Theta$ is denoted by $[x]_\Theta$.

\begin{proposition}
\label{latcong2.2}
Given $S_n$ a finite lattice, $\Theta$ a congruence on $S_n$, and $x \in S_n$, the map $y \to [y]_\Theta$ restricts to a one-to-one correspondence between elements of $S_n$ covered by $\pi_\downarrow(x)$ and elements of $\Av_n$ covered by  $[x]_\Theta$.
\end{proposition}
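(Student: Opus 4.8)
The plan is to reduce the statement to a fact about the sublattice of congruence-class minima. Write $a := \pi_\downarrow(x)$, so $a$ is the least element of the class $[x]_\Theta$. I will use only the features of $\pi_\downarrow$ recalled before the statement: it is order preserving and idempotent, constant on each congruence class, satisfies $\pi_\downarrow(z) \le z$, and is a lattice homomorphism onto its image $\Av_n$, where $\Av_n$ carries the order induced from $S_n$ and, so equipped, is the quotient lattice $S_n/\Theta$; equivalently $[u]_\Theta \le [v]_\Theta$ iff $\pi_\downarrow(u) \le \pi_\downarrow(v)$. I also use that each class $[z]_\Theta = [\pi_\downarrow(z), \pi^\uparrow(z)]$ is an interval, hence a sublattice of $S_n$. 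Under the order isomorphism $[z]_\Theta \leftrightarrow \pi_\downarrow(z)$, the map $y \mapsto [y]_\Theta$ becomes $y \mapsto \pi_\downarrow(y)$, and ``$[z]_\Theta$ is covered by $[x]_\Theta$'' becomes ``$\pi_\downarrow(z)$ is covered by $a$ in $\Av_n$''. So it suffices to show that $y \mapsto \pi_\downarrow(y)$ is a bijection from the set of elements of $S_n$ covered by $a$ onto the set of elements of $\Av_n$ covered by $a$.

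First I would verify the map is well defined. If $y$ is covered by $a$ in $S_n$ then $\pi_\downarrow(y) \le y < a$ with $\pi_\downarrow(y) \in \Av_n$, so $\pi_\downarrow(y) < a$ in $\Av_n$; the point is to rule out an element $b \in \Av_n$ with $\pi_\downarrow(y) < b < a$. Such a $b$ would satisfy $[y]_\Theta < [b]_\Theta$, for $\pi_\downarrow(y) \le b$ gives $[y]_\Theta \le [b]_\Theta$, and equality would force $b = \pi_\downarrow(b) = \pi_\downarrow(y)$, contradicting $\pi_\downarrow(y) < b$. Now $y \le b \vee y \le a$ and $y$ is covered by $a$, so $b \vee y \in \{y, a\}$. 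If $b \vee y = y$ then $b \le y$, hence $b = \pi_\downarrow(b) \le \pi_\downarrow(y)$, impossible. If $b \vee y = a$ then applying the quotient homomorphism gives $[a]_\Theta = [b]_\Theta \vee [y]_\Theta = [b]_\Theta$, so the class minima $a$ and $b$ coincide, contradicting $b < a$. Hence $\pi_\downarrow(y)$ is indeed covered by $a$ in $\Av_n$.

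Injectivity and surjectivity are then short. If $y_1, y_2$ are covered by $a$ with $\pi_\downarrow(y_1) = \pi_\downarrow(y_2)$, then $y_1$ and $y_2$ lie in a common class, which is a sublattice, so $y_1 \vee y_2$ lies in it as well; but $y_1 \vee y_2 \le a$ and each $y_i$ is covered by $a$, so $y_1 \vee y_2 \in \{y_1, y_2, a\}$, and the value $a$ is excluded since it would equate the class minimum $a$ with the strictly smaller element $\pi_\downarrow(y_1)$ of the same class. Thus $y_1 = y_2$. For surjectivity, let $b \in \Av_n$ be covered by $a$ in $\Av_n$; then $b \le a$ in $S_n$ and $b \ne a$, so $b < a$, and taking a maximal chain from $b$ to $a$ in $S_n$ and letting $y$ be its penultimate element gives an element covered by $a$ with $b = \pi_\downarrow(b) \le \pi_\downarrow(y) \le y < a$. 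Since $\pi_\downarrow(y) \in \Av_n$ and $b$ is covered by $a$ in $\Av_n$, this squeezes $\pi_\downarrow(y) = b$. Unwinding the order isomorphism turns this into the asserted one-to-one correspondence.

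The step I expect to be the main obstacle is the well-definedness argument: it is the only place that genuinely combines the homomorphism property of $\pi_\downarrow$ with the interval/sublattice structure of the congruence classes, and the bookkeeping of which of $y$ and $a$ the join $b \vee y$ collapses to is where one must be careful. After that, injectivity is an immediate consequence of the sublattice property and surjectivity of a one-line squeezing argument, so the rest is routine.
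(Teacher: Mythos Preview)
Your argument is correct. The paper does not give its own proof of this proposition; it simply records it as a special case of \cite[Proposition~2.2]{latcong} and moves on. So there is nothing to compare against, but your proof is self-contained and sound: the well-definedness step correctly uses that $\pi_\downarrow$ is a lattice homomorphism to collapse the case $b\vee y=a$, the injectivity step correctly uses that congruence classes are intervals (hence sublattices) to force $y_1\vee y_2$ into the common class, and the surjectivity squeeze is clean. Your proof also makes no use of anything specific to $S_n$ or to the particular congruence, so it in fact establishes the general lattice-theoretic statement of which this proposition is an instance.
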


Both $\pi_\downarrow$ and $\pi^\uparrow$ are order preserving and $\pi^\uparrow\circ\pi_\downarrow=\pi^\uparrow$ and $\pi_\downarrow\circ\pi^\uparrow=\pi_\downarrow$.
A \emph{$\pi_\downarrow$-move} is the result of switching two adjacent entries of a permutation in the manner described above.  
That is, it changes $\cdots k1 \cdots$ to $\cdots 1k \cdots$ for some pattern in $U$.
A \emph{$\pi^\uparrow$-move} is the result of switching two adjacent entries of a permutation in a way such that a $\pi^\uparrow$-move undoes a $\pi_\downarrow$-move.  
That is, it changes $\cdots 1k \cdots$ to $\cdots k1 \cdots$.  

We define a map $r:\K[S_\infty]\to\K[\Av_\infty]$ that identifies the representative of a congruence class.  
Given $x\in S_{n}$,
\[r(x)=\begin{cases}
x&\mbox{if }x\in\Av_{n}\\
0&\mbox{otherwise}.
\end{cases}
\]

Similarly, we define a map $c:\K[\Av_{\infty}]\to\K[S_{\infty}]$ that takes an avoider to the sum of its congruence class:
\[c(x)=\sum_{\substack{y \text{ such that} \\ \pi_\downarrow(y)=x}}y.\]

We now describe the product and coproduct in $(\K[\Av_{\infty}], \bullet_{\Av}, \Delta_{\Av})$.
Let $x \in \Av_{p}$, and let $y \in \Av_{q}$.  
Then:
\begin{equation} 
\label{AVprod} 
m_{\Av}(x \otimes y) = x \bullet_{\Av} y=r(x \bullet y). 
\end{equation}

Just as the product in MR is $ \sum [x \cdot y', y' \cdot x]$, we can view this product as:

\begin{equation} 
\label{AVprod2} 
x\bullet_{\Av}y = \sum [x \cdot y', \pi_\downarrow(y' \cdot x)],
\end{equation}
where $[x \cdot y', \pi_\downarrow(y' \cdot x)]$ is an interval on the lattice $\Av_n$.

The coproduct is:
\begin{equation} 
\label{AVcoprod} 
\Delta_{\Av}(z)=(r\otimes r)(\Delta(c(z))). 
\end{equation}

We now describe the Hopf algebra denoted by $(\K[\Av_\infty], \Delta^*_{\Av}, \bullet^*_{\Av})$ that is dual to $(\K[\Av_\infty],\bullet_{\Av},\Delta_{\Av})$.  
We extend the map $\pi_\downarrow$ linearly, so $\pi_\downarrow$ is a map from $\K[S_{\infty}]$ to $\K[\Av_{\infty}]$.
The map that is dual to the map $c$ is $c^*:\K[S_{\infty}]\to\K[\Av_{\infty}]$, where $c^*(x) = \pi_\downarrow(x)$ for $x \in \K[S_\infty]$.  
The map that is dual to the map $r$ is $r^*:\K[\Av_\infty]\to\K[S_\infty]$, where $r^*(x) = x$ for $x \in \K[\Av_\infty]$.

Let $z \in \Av_{n}$, where $n=p+q$.
The dual coproduct is given by dualizing Equation~\eqref{AVprod}, so that:
\begin{equation} 
\label{AVdualcoprod} 
m^*_{\Av}(z) =m^*(z). 
\end{equation}

The dual product $\Delta^*_{\Av}$ is given by dualizing Equation~\eqref{AVcoprod}:
\begin{equation} 
\label{AVdualprod} 
\Delta^*_{\Av}(x \otimes y) = \pi_\downarrow \Delta^*(x \otimes y). 
\end{equation}
Combining Equation~\eqref{AVdualprod} with Equation~\eqref{MRdualprod}, we have:
\begin{equation} 
\label{AVdualprod2} 
\Delta^*_{\Av}(x \otimes y) = \sum_{\substack{T\subseteq [n] \\ |T|=p}} \pi_\downarrow((x)_T \cdot (y)_{T^C}) 
\end{equation}

Equation~\eqref{AVdualprod2} leads to the following order theoretic description of the coproduct $\Delta_{\Av}$, which was worked out jointly with Nathan Reading.

Given $z\in\Av_n$, a subset $T\subseteq[n]$ is \emph{good with respect to $z$} if there exists a permutation $z'=z'_1\cdots z'_n$ with $\pi_\downarrow(z')=z$ such that $T=\{ z'_1,\ldots,z'_{|T|}\}$.
Suppose $T$ is good with respect to $z$, let $p=|T|$ and let $q=n-p$.
Let $z_{\min}$ be minimal, in the weak order on $S_n$, among permutations equivalent to $z$ and whose first $p$ entries are the elements of $T$.
Let $z_{\max}$ be maximal, in the weak order, among such permutations.
Define $I_T$ to be the sum over the elements in the interval $[\st( z_{\min}|_{T} ),\pi_\downarrow\st( z_{\max}|_{T} )]$ in $\Av_p$ and define $J_T$ to be the sum over the elements in the interval $[\st( z_{\min}|_{T^C} ),\pi_\downarrow\st( z_{\max}|_{T^C} )]$ in $\Av_q$.

\begin{theorem}\label{coprod int}
Let $z \in \Av_n$.
Then 
\[\Delta_{\Av}(z) = \sum_{T \text{ is good}} I_T \otimes J_T\]
where $I_T = \sum [\st( z_{\min}|_{T} ),\pi_\downarrow\st( z_{\max}|_{T} )]$, $J_T = \sum [\st( z_{\min}|_{T^C} ),\pi_\downarrow\st( z_{\max}|_{T^C} )]$.
\end{theorem}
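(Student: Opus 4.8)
The plan is to verify the identity by comparing, for every pair of avoiders $x\in\Av_p$ and $y\in\Av_q$ with $p+q=n$, the coefficient of $x\otimes y$ on the two sides; this suffices since both sides are supported on $\Av_p\otimes\Av_q$ over all such $p,q$. For the left-hand side I would unwind $\Delta_{\Av}(z)=(r\otimes r)(\Delta(c(z)))$: writing $c(z)=\sum_{w\in[z]_\Theta}w$ and applying the deconcatenation-with-standardization formula for $\Delta$, a term $\st(w_1\cdots w_i)\otimes\st(w_{i+1}\cdots w_n)$ can equal $x\otimes y$ only if $i=p$, and then $\st(w_1\cdots w_p)=x$, $\st(w_{p+1}\cdots w_n)=y$ means $w_1\cdots w_p$ and $w_{p+1}\cdots w_n$ are the unique rearrangements $(x)_T$ and $(y)_{T^C}$ of $T:=\{w_1,\dots,w_p\}$ and its complement standardizing to $x$ and $y$; moreover $r\otimes r$ keeps such a term since $x,y$ are avoiders. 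Thus $w$ is determined by the set $T$, and the coefficient of $x\otimes y$ in $\Delta_{\Av}(z)$ equals $\#\{T\subseteq[n]:|T|=p,\ (x)_T\cdot(y)_{T^C}\in[z]_\Theta\}$. (One could instead read this off from Equation~\eqref{AVdualprod2} by pairing against the combinatorial basis; I would argue directly to avoid invoking the pairing.)

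The combinatorial heart is an analysis of the set $A_T:=[z]_\Theta\cap C_T$, where $C_T:=\{w\in S_n:\{w_1,\dots,w_p\}=T\}$ is the parabolic coset of permutations with value set $T$ in the first $p$ positions. Two standard facts are needed: $C_T$ is a weak-order interval, and $w\mapsto(\st(w|_T),\st(w|_{T^C}))$ is a poset isomorphism of $C_T$ onto $S_p\times S_q$ with the product weak order, because the cross-block inversions of $w$ depend only on $T$ while the within-block inversions correspond to the inversions of the two standardizations. Since $[z]_\Theta$ and $C_T$ are intervals in the lattice $S_n$, $A_T$ is an interval or empty; it is nonempty exactly when $T$ is good, and in that case $\min A_T=z_{\min}$ and $\max A_T=z_{\max}$ in the notation preceding the theorem. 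Writing $z_{\min}=(a_T)_T\cdot(b_T)_{T^C}$ and $z_{\max}=(c_T)_T\cdot(d_T)_{T^C}$ under the isomorphism, so $a_T=\st(z_{\min}|_T)$, $c_T=\st(z_{\max}|_T)$, etc., the product decomposition of the order on $C_T$ turns the coefficient computed above into $\#\{T\text{ good}:a_T\le x\le c_T\text{ in }S_p\text{ and }b_T\le y\le d_T\text{ in }S_q\}$.

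It remains to identify these $S_p$- and $S_q$-intervals with the $\Av_p$- and $\Av_q$-intervals appearing in the theorem, which rests on a move-lifting observation: a $\pi_\downarrow$-move or $\pi^\uparrow$-move performed on $u$ in $S_p$ lifts, via the isomorphism above, to the same kind of move on the first block of $(u)_T\cdot(v)_{T^C}$ in $S_n$, since a forbidden-pattern occurrence confined to the first block, together with the adjacency of its ``$k1$'' entries, is unchanged; symmetrically for the second block. From this I would deduce two claims. First, $\st(z_{\min}|_T)\in\Av_p$ and $\st(z_{\min}|_{T^C})\in\Av_q$: a forbidden pattern in such a restriction is a forbidden pattern inside a block of $z_{\min}$, giving a $\pi_\downarrow$-move (which preserves the $\Theta$-class) that produces an element of $A_T$ strictly below $z_{\min}$, contradicting minimality. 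Second, $\st(z_{\max}|_T)=\pi^\uparrow(\pi_\downarrow\st(z_{\max}|_T))$, and likewise for $T^C$: lifting a sequence of moves within the $\Theta$-class of $c_T$ joining $c_T$ to its class-top $\pi^\uparrow c_T$ (route via the class bottom) shows $(\pi^\uparrow c_T)_T\cdot(d_T)_{T^C}$ lies in $A_T$ and, by the product order, weakly above $z_{\max}$, hence equals $z_{\max}$, forcing $\pi^\uparrow c_T=c_T$.

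Granting these, for an avoider $x\in\Av_p$ the conditions ``$a_T\le x\le c_T$ in $S_p$'' and ``$x\in[\,a_T,\ \pi_\downarrow c_T\,]$ in $\Av_p$'' coincide: $a_T$ is already an avoider; applying the order-preserving $\pi_\downarrow$, which fixes $x$, to $x\le c_T$ gives $x\le\pi_\downarrow c_T$; conversely $\pi_\downarrow c_T\le\pi^\uparrow(\pi_\downarrow c_T)=c_T$. The same holds for $y$ with $b_T,d_T$, so the two coefficients agree good-$T$ by good-$T$, which proves the theorem. I expect the main obstacle to be precisely this interplay between the parabolic decomposition of the weak order and the congruence $\Theta$: making the move-lifting lemma precise, and using it for the upper endpoints by transporting an entire move-sequence rather than arguing — more delicately, and I believe incorrectly — from the absence of an applicable move.
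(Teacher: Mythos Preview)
Your proof is correct, and it takes a genuinely different route from the paper's. The paper works entirely in $\Av_p\times\Av_q$: it uses the dual formula \eqref{AVdualprod2} to identify the relevant set $\terms(z,T)=\{x\otimes y:\pi_\downarrow((x)_T\cdot(y)_{T^C})=z\}$, then shows this set is an interval by proving it is closed under meets and joins (via Lemma~\ref{meets_joins} and the fact that $\pi_\downarrow$ is a lattice homomorphism) and order-convex (Lemma~\ref{coprod convex}), and finally pins down the endpoints by separate arguments. You instead stay in $S_n$: you observe that $A_T=[z]_\Theta\cap C_T$ is the intersection of two weak-order intervals, hence itself an interval, and then exploit the standard poset isomorphism $C_T\cong S_p\times S_q$ to factor that interval as $[a_T,c_T]\times[b_T,d_T]$ before descending to $\Av_p\times\Av_q$ at the very end. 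What your approach buys is that it bypasses both auxiliary lemmas and never needs $\pi_\downarrow$ to be a lattice homomorphism---only that congruence classes are intervals. What the paper's approach buys is that it lives natively in the quotient, which makes the link to the dual product (Proposition~\ref{coprod dualprod}) immediate. Both arguments converge on the same move-lifting idea to control the endpoints; your observation that $c_T=\pi^\uparrow c_T$ is a clean way to package the upper-endpoint argument, though strictly speaking only $\pi_\downarrow c_T\le c_T$ is needed for the final equivalence (and that inequality is automatic).
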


To prove Theorem~\ref{coprod int}, we first need several lemmas.

\begin{lemma}
The elements in the interval $[z_{\min}, z_{\max}]$ are equivalent to $z$ and their first $p$ entries are the elements of $T$.
\end{lemma}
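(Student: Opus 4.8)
The plan is to prove the two conclusions separately: the statement about equivalence is almost immediate from a fact already recorded, while the statement about the first $p$ entries reduces to a short argument with inversion sets. For the first conclusion, I would use that each congruence class of $\Theta$ is an interval in the weak order, as noted in the paragraph preceding Proposition~\ref{latcong2.2}. Write $[z]_\Theta = [a,b]$. Since $z_{\min}$ and $z_{\max}$ are, by definition, permutations equivalent to $z$, they both lie in $[a,b]$, so $a \le z_{\min} \le z_{\max} \le b$; in particular $z_{\min} \le z_{\max}$, so the interval $[z_{\min}, z_{\max}]$ is well defined, and $[z_{\min}, z_{\max}] \subseteq [a,b] = [z]_\Theta$. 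Hence $\pi_\downarrow(w) = z$ for every $w$ in the interval.

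For the second conclusion I would first reformulate the target condition. For $w = w_1 \cdots w_n$, the statement ``$\{w_1,\dots,w_p\} = T$'' is equivalent to saying that in the one-line notation of $w$ every element of $T$ appears to the left of every element of $T^C$: one direction is obvious, and conversely, if some element of $T^C$ occupied one of the first $p$ positions then all $p$ elements of $T$ would be forced into the strictly fewer than $p$ positions to its left, which is impossible. So it suffices to fix $t \in T$ and $s \in T^C$ and show that $t$ appears before $s$ in $w$. By hypothesis $t$ appears before $s$ in both $z_{\min}$ and $z_{\max}$. If $t > s$, then ``$t$ before $s$'' is exactly the inversion $(t,s)$, which therefore lies in the inversion set of $z_{\min}$, hence (using $z_{\min} \le w$) in the inversion set of $w$, so $t$ appears before $s$ in $w$. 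If instead $t < s$, then the only inversion that $t$ and $s$ could form is $(s,t)$, and since $t$ precedes $s$ in $z_{\max}$ this inversion is absent from the inversion set of $z_{\max}$; as $w \le z_{\max}$, it is also absent from the inversion set of $w$, so again $t$ precedes $s$ in $w$. Since $t \in T$ and $s \in T^C$ were arbitrary, the first $p$ entries of $w$ are precisely the elements of $T$.

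I do not expect a genuine obstacle here. The only point that demands care is applying the correct half of the containment ``inversion set of $z_{\min}$ $\subseteq$ inversion set of $w$ $\subseteq$ inversion set of $z_{\max}$'' in each of the two cases — the lower bound when $t > s$ and the upper bound when $t < s$ — together with setting up the reformulation of ``first $p$ entries equal $T$'' as the left-to-right ordering condition between $T$ and $T^C$; both are routine once stated precisely.
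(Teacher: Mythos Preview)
Your proposal is correct and follows essentially the same route as the paper's proof: both use that congruence classes are intervals for the first assertion, and for the second both split on the relative order of an element of $T$ and an element of $T^C$ and appeal to the containment of inversion sets coming from $z_{\min}\le w\le z_{\max}$. The only cosmetic difference is that you argue directly (fix $t\in T$, $s\in T^C$ and show $t$ precedes $s$ in $w$), whereas the paper phrases the same two-case inversion-set argument as a proof by contradiction.
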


\begin{proof}
All of the elements in the interval $[z_{\min}, z_{\max}]$ are equivalent to $z$ because equivalence classes are intervals in the weak order.  
To prove the rest of the lemma, suppose for the sake of contradiction that there is an element $z' \in [z_{\min}, z_{\max}]$ whose first $p$ entries are not the elements of $T$.  
That is, $z'$ has some $y \in T^C$ before some $x \in T$.  
If $x < y$, then $(y,x) \in \Inv(z')$, but $(y, x) \notin \Inv(z_{\max})$, so $z' \notin [z_{\min}, z_{\max}]$.  
If $y < x$, then $(x, y) \in \Inv(z_{\min})$, but $(x, y) \notin \Inv(z')$, so $z' \notin [z_{\min}, z_{\max}]$.  
Therefore the first $p$ entries of elements in the interval $[z_{\min}, z_{\max}]$ are the elements of $T$.
\end{proof}

\begin{lemma}  \label{coprod convex} 
Suppose $T \subseteq [n]$ with $|T|=p$.  Let $q=n-p$.  Suppose also that $x_1 \leq x_2 \leq x_3$ in $\Av_p$, and that $y_1 \leq y_2 \leq y_3$ in $\Av_q$.  
If $\pi_\downarrow ( (x_1)_T \cdot (y_1)_{T^C} ) = \pi_\downarrow ( (x_3)_T \cdot (y_3)_{T^C} )  = z$, then $\pi_\downarrow ( (x_2)_T \cdot (y_2)_{T^C} ) = z$.
\end{lemma}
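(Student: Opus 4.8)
The plan is to exploit the fact that $\pi_\downarrow$ is a lattice homomorphism from the weak order on $S_n$ onto $\Av_n$, and in particular that the fibers $\pi_\downarrow^{-1}(z)$ are intervals $[z, \pi^\uparrow(z)]$ in the weak order. So it suffices to show that $w_2 := (x_2)_T \cdot (y_2)_{T^C}$ lies in the weak-order interval between $w_1 := (x_1)_T \cdot (y_1)_{T^C}$ and $w_3 := (x_3)_T \cdot (y_3)_{T^C}$; then $\pi_\downarrow(w_2)$ lies weakly between $\pi_\downarrow(w_1)=z$ and $\pi_\downarrow(w_3)=z$ in $\Av_n$, forcing $\pi_\downarrow(w_2)=z$. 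Actually, a cleaner route avoids even needing $w_1 \le w_2 \le w_3$ in $S_n$: since $\pi_\downarrow$ is order preserving, $\pi_\downarrow(w_1) \le \pi_\downarrow(w_2)$ would follow from $w_1 \le w_2$, and similarly $\pi_\downarrow(w_2) \le \pi_\downarrow(w_3)$ from $w_2 \le w_3$, and then $z \le \pi_\downarrow(w_2) \le z$ in the lattice $\Av_n$ gives the claim. So the entire lemma reduces to the inequality $w_1 \le w_2 \le w_3$ in the weak order on $S_n$.

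To prove $w_1 \le w_2$ (the argument for $w_2 \le w_3$ is identical), I would compare inversion sets directly. Write $w_a = (x_a)_T \cdot (y_a)_{T^C}$ for $a \in \{1,2\}$; note that by construction the first $p$ positions of $w_a$ are occupied by the elements of $T$ and the last $q$ positions by the elements of $T^C$, for both $a=1,2$. Classify the inversions of $w_a$ into three types: (i) both entries in $T$, (ii) both entries in $T^C$, (iii) one entry from each of $T$ and $T^C$. For type (iii), an inversion $(s,t)$ with $s \in T$, $t \in T^C$ occurs in $w_a$ iff $s$ precedes $t$ and $s > t$; since in every $w_a$ all of $T$ precedes all of $T^C$, this happens iff $s > t$, which is independent of $a$. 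Hence the type-(iii) inversions of $w_1$ and $w_2$ coincide. For type (i), the inversions of $w_a$ among entries of $T$ are, under the order-isomorphism $[p] \to T$, exactly the inversions of $x_a$; since $x_1 \le x_2$ in $\Av_p$ (hence in $S_p$, as the weak order on $\Av_p$ is the restriction of the weak order), $\Inv(x_1) \subseteq \Inv(x_2)$, so the type-(i) inversions of $w_1$ are contained in those of $w_2$. Type (ii) is handled identically using $y_1 \le y_2$. Combining the three types, $\Inv(w_1) \subseteq \Inv(w_2)$, i.e. $w_1 \le w_2$.

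I expect the main (and only slightly delicate) obstacle to be the bookkeeping in the type-(i)/(ii)/(iii) classification — specifically, verifying carefully that relabeling via the order isomorphisms $(\cdot)_T$ and $(\cdot)_{T^C}$ sends inversions to inversions and that the concatenation contributes no type-(iii) inversions beyond those dictated by the raw numerical comparison of a $T$-element with a $T^C$-element. One should also make sure that the weak order on $\Av_p$ used in the hypothesis genuinely agrees with the weak order restricted from $S_p$; this is exactly the statement, recalled earlier in the excerpt, that the congruence classes are intervals and the induced poset on $\Av_p$ is a lattice whose order is the restriction of weak order, so $x_1 \le x_2$ in $\Av_p$ does give $\Inv(x_1)\subseteq\Inv(x_2)$. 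Everything else is a short deduction from the fact that $\pi_\downarrow$ is an order-preserving surjection.
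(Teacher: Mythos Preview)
Your proposal is correct and follows essentially the same approach as the paper: establish $w_1 \le w_2 \le w_3$ in the weak order, apply the order-preserving map $\pi_\downarrow$, and squeeze. The paper's proof simply asserts the inequality $(x_1)_T \cdot (y_1)_{T^C} \le (x_2)_T \cdot (y_2)_{T^C} \le (x_3)_T \cdot (y_3)_{T^C}$ without justification, whereas you spell out the inversion-set bookkeeping that verifies it; so your version is strictly more detailed but structurally identical.
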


\begin{proof}  
If $x_1 \leq x_2 \leq x_3$, and $y_1 \leq y_2 \leq y_3$, then $(x_1)_T \cdot (y_1)_{T^C} \leq (x_2)_T \cdot (y_2)_{T^C} \leq (x_3)_T \cdot (y_3)_{T^C}$.  
Since $\pi_\downarrow$ is an order preserving map, $\pi_\downarrow ( (x_1)_T \cdot (y_1)_{T^C} ) \leq \pi_\downarrow ( (x _2)_T \cdot (y_2)_{T^C} ) \leq \pi_\downarrow ( (x _3)_T \cdot (y_3)_{T^C} )$.  
The assertion of the lemma follows.
\end{proof}

\begin{lemma} \label{meets_joins} 
Suppose $x_1, x_2 \in S_p$ and $y_1, y_2 \in S_q$.  
Suppose $T \subseteq [n]$, where $n=p+q$, and with $|T|=p$.  The following identities hold:
\begin{center}
$(x_1)_T \cdot (y_1)_{T^C} \vee (x_2)_T \cdot (y_2)_{T^C} = (x_1 \vee x_2)_T \cdot (y_1 \vee y_2)_{T^C}$\\
$(x_1)_T \cdot (y_1)_{T^C} \wedge (x_2)_T \cdot (y_2)_{T^C} = (x_1 \wedge x_2)_T \cdot (y_1 \wedge y_2)_{T^C}$
\end{center}
\end{lemma}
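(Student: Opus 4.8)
The plan is to reduce both identities to a pointwise statement about inversion sets, since the weak order is defined by containment of inversion sets and meets and joins in the weak order are controlled by inversion sets. First I would fix notation: write $w = (x_1)_T \cdot (y_1)_{T^C}$ and $w' = (x_2)_T \cdot (y_2)_{T^C}$, so that $w$ is the permutation of $[n]$ whose first $p$ entries are the elements of $T$, arranged in the relative order dictated by $x_1$, followed by the elements of $T^C$ arranged in the relative order dictated by $y_1$ (and similarly for $w'$). The key structural observation is that $w|_T = (x_1)_T$, $w|_{T^C} = (y_1)_{T^C}$, that every element of $T$ precedes every element of $T^C$ in both $w$ and $w'$, and that consequently $\Inv(w)$ decomposes as a disjoint union $\Inv(w) = A_1 \sqcup B_1$, where $A_1$ is the set of inversions among the first $p$ positions and $B_1$ is the set of inversions among the last $q$ positions — there are no inversions between the two blocks because $w$ lists $T$ before $T^C$ and these blocks need not be comparable, but crucially the \emph{same} set of ``cross'' pairs $\{(a,b) : a \in T^C, b \in T\}$ with $a > b$ contributes identically to $w$ and $w'$, so it cancels out of the comparison. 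More precisely, $A_1$ is in order-isomorphic bijection with $\Inv(x_1)$ via the order isomorphism $[p] \to T$, and $B_1$ with $\Inv(y_1)$ via $[q] \to T^C$; likewise $\Inv(w') = A_2 \sqcup B_2$ with $A_2 \cong \Inv(x_2)$ and $B_2 \cong \Inv(y_2)$.

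Next I would recall that for permutations $u, v \in S_n$, the join $u \vee v$ is the unique permutation whose inversion set is the \emph{transitive closure} of $\Inv(u) \cup \Inv(v)$ (not merely the union, since an arbitrary union of inversion sets need not be an inversion set), and dually $\Inv(u \wedge v)$ is the largest inversion set contained in $\Inv(u) \cap \Inv(v)$. I would then argue that for our specific $w, w'$ both operations behave blockwise. For the meet: $\Inv(w) \cap \Inv(w') = (A_1 \cap A_2) \sqcup (B_1 \cap B_2)$ because $A_i$ and $B_j$ live on disjoint sets of positions; under the order isomorphisms this is $\Inv(x_1) \cap \Inv(x_2)$ on the first block and $\Inv(y_1) \cap \Inv(y_2)$ on the second. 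A pair of positions strictly within the first block that would need to be added to make an inversion set is again within the first block (transitivity/closure does not create cross-block pairs here, since all first-block entries exceed no second-block entries in the relevant direction once we note $T$ precedes $T^C$), so the largest inversion set inside $\Inv(w)\cap\Inv(w')$ is exactly (the image of) $\Inv(x_1 \wedge x_2)$ on the first block together with $\Inv(y_1 \wedge y_2)$ on the second. That inversion set is realized by the permutation $(x_1 \wedge x_2)_T \cdot (y_1 \wedge y_2)_{T^C}$, which proves the meet identity. The join identity is entirely dual: the transitive closure of $\Inv(w) \cup \Inv(w') = (A_1 \cup A_2) \sqcup (B_1 \cup B_2)$ closes up separately in each block (a chain of inversions witnessing transitivity cannot leave a block, since crossing into the $T^C$-block and back would require a $T^C$-entry to be less than a $T$-entry and then a $T$-entry less than a $T^C$-entry at incompatible positions), giving the images of $\Inv(x_1 \vee x_2)$ and $\Inv(y_1 \vee y_2)$, realized by $(x_1 \vee x_2)_T \cdot (y_1 \vee y_2)_{T^C}$.

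The step I expect to be the main obstacle is justifying rigorously that transitive closure (for the join) and the ``largest contained inversion set'' operation (for the meet) respect the block decomposition — i.e., that no new cross-block inversions are forced in or out. The cleanest way to handle this is to observe that the set of permutations of $[n]$ in which every element of $T$ precedes every element of $T^C$ forms a sublattice of the weak order isomorphic to $S_p \times S_q$: it is an interval $[u_0, u_1]$ where $u_0$ has $T$ increasing then $T^C$ increasing and $u_1$ has $T$ decreasing then $T^C$ decreasing, intervals in a lattice are sublattices, and the map $(a,b) \mapsto (a)_T \cdot (b)_{T^C}$ is the obvious order isomorphism $S_p \times S_q \to [u_0,u_1]$. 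Given that, meets and joins computed in $[u_0,u_1]$ agree with those computed in $S_n$, and they are computed coordinatewise in $S_p \times S_q$, which is precisely the two displayed identities. I would present the argument in this sublattice form, as it sidesteps the delicate transitive-closure bookkeeping entirely while still being short.
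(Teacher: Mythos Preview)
Your approach is correct and shares the paper's core idea: both arguments decompose the inversion set of $(x)_T\cdot(y)_{T^C}$ into within-$T$ inversions (coming from $x$), within-$T^C$ inversions (coming from $y$), and a fixed set of cross inversions depending only on $T$, and then compare the two sides. The paper's proof is terser than yours and simply asserts that the inversion set of the join on the left equals the union of the constituent inversion sets, without explicitly addressing the point you flag---that in general $\Inv(u\vee v)$ is the transitive closure of $\Inv(u)\cup\Inv(v)$ rather than the bare union. Your final paragraph gives exactly the clean justification the paper omits: the permutations listing $T$ before $T^C$ form an interval $[u_0,u_1]$ in weak order (hence a sublattice), and $(a,b)\mapsto(a)_T\cdot(b)_{T^C}$ is an order isomorphism $S_p\times S_q\to[u_0,u_1]$, so meets and joins are computed coordinatewise. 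That formulation is arguably crisper than both the paper's argument and your own first two paragraphs; I would lead with it. One small slip in your exposition: the cross pairs are $(a,b)$ with $a\in T$, $b\in T^C$, $a>b$ (you wrote the sets the other way around), and you should not say ``there are no inversions between the two blocks''---there are, they are just identical for every permutation of this shape.
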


\begin{proof}
First we consider the identity with joins.
There are three different kinds of inversions in $\displaystyle{(x_1)_T \cdot (y_1)_{T^C}}$: inversions within $x_1$, inversions within $y_1$, and inversions between $T$ and $T^C$.
The inversion set of the permutation on the left hand side of the equation is the union of: inversions within $x_1$ in terms of $T$, inversions within $y_1$ in terms of $T^C$, inversions within $x_2$ in terms of $T$, inversions within $y_2$ in terms of $T^C$, and inversions between $T$ and $T^C$.
Similarly, the inversion set of the permutation on the right hand side of the equation is the union of: inversions within $x_1$ or $x_2$ in terms of $T$, inversions within $y_1$ or $y_2$ in terms of $T^C$, and inversions between $T$ and $T^C$.
Therefore the permutation on the left hand of the equation and the permutation on the right hand of the equation have identical inversion sets and are thus the same.

The proof for the identity with meets is identical except for examining intersections of the inversion sets instead of unions.
\end{proof}

\begin{proof}[Proof of Theorem~\ref{coprod int}]
In light of Equation~\eqref{AVdualprod2}, $\Delta_{\Av}(z)$ is the sum, over $T \subseteq [n]$, of terms $x \otimes y \in \Av_p \otimes \Av_q$ such that $\pi_\downarrow( (x)_T \cdot (y)_{T^C} ) =z$.  
Some terms $x \otimes y$ may appear in $\Delta_{\Av}(z)$ with coefficient greater than 1, but for each $T$, a term $x \otimes y$ occurs at most once.
Let $\terms(z,T)$ be the set $\{ x \otimes y : \pi_\downarrow( (x)_T \cdot (y)_{T^C} )=z\}$.
It is immediate that when $\terms(z,T)$ is nonempty, $T$ is good with respect to $z$.
On the other hand, if $T$ is good with respect to $z$, then let $z'$ have $\pi_\downarrow(z')=z$ and $\{z'_1, z'_2, \dots, z'_{|T|}\}=T$.
Let $x\in S_p$ and $y \in S_q$ be such that $z'=(x)_T\cdot (y)_{T^C}$. 
Then $\pi_\downarrow(x) \in \Av_p$ and $\pi_\downarrow(y) \in \Av_q$.
Since $\pi_\downarrow(x)$ is obtained from $x$ by a sequence of $\pi_\downarrow$-moves, and $\pi_\downarrow(y)$ is obtained similarly from $y$, we see that $(\pi_\downarrow(x))_T\cdot(\pi_\downarrow(y))_{T^C}$ is obtained from $z'=(x)_T\cdot (y)_{T^C}$ by a sequence of $\pi_\downarrow$-moves.  
Thus, $\pi_\downarrow((\pi_\downarrow(x))_T\cdot(\pi_\downarrow(y))_{T^C}) = \pi_\downarrow(z')=z$, so $\pi_\downarrow(x)\otimes \pi_\downarrow(y) \in \terms(z,t)$ and in particular $\terms(z,T)$ is nonempty.

Next, we need to show that, for each good subset $T$, the set $\terms(z,T)$ is of the form $I_T \otimes J_T$.  
For convenience, we consider each $x \otimes y$ as an element of $\Av_p \times \Av_q$ without rewriting $x \otimes y$ as $(x,y)$.

Suppose $x_1 \otimes y_1$ and $x_2 \otimes y_2$ are in $\terms(z,T)$.  
Then by Lemma~\ref{meets_joins}, 
\[\pi_\downarrow \big((x_1 \vee x_2)_T \cdot (y_1 \vee y_2)_{T^C}\big) = \pi_\downarrow \big((x_1)_T \cdot (y_1)_{T^C} \vee (x_2)_T \cdot (y_2)_{T^C} \big).\]  
Since $\pi_\downarrow$ is a lattice homomorphisim, the latter is 
\[\pi_\downarrow \big((x_1)_T \cdot (y_1)_{T^C} \big) \vee \pi_\downarrow \big((x_2)_T \cdot (y_2)_{T^C} \big) = z \vee z = z.\]  
Thus $(x_1 \vee x_2) \otimes (y_1\vee y_2)$ is in $\terms(z,T)$.
The same argument holds for meets, so $\terms(z,T)$ is closed under meets and joins in the product order $\Av_p \times \Av_q$.  
Lemma~\ref{coprod convex} implies that $\terms(z,T)$ is order-convex in $\Av_p \times \Av_q$.
An order-convex subset that is closed under meets and joins is necessarily an interval.

Suppose $x\otimes y < \st ( z_{\min}|_{T} ) \otimes \st ( z_{\min}|_{T^C} )$ in $\Av_p\times\Av_q$.
Then $(x)_T \cdot (y)_{T^C} < z_{\min}$ in $S_n$.
Thus $\pi_\downarrow \big((x)_T \cdot (y)_{T^C} \big)\neq z$, by the definition of $z_{\min}$, and therefore $x\otimes y\not\in\terms(z,T)$.
Thus $\st( z_{\min}|_{T} )\otimes\st( z_{\min}|_{T^C} )$ is the minimal element of $\terms(z,T)$.

Now suppose $x \otimes y > \pi_\downarrow\st( z_{\max}|_{T} ) \otimes \pi_\downarrow\st( z_{\max}|_{T^C} )$ in $\Av_p \times \Av_q$.
Then since $\pi^\uparrow$ is order-preserving and $\pi^\uparrow\circ\pi_\downarrow=\pi^\uparrow$, we see that $\pi^\uparrow(x)\otimes \pi^\uparrow(y) > \pi^\uparrow\st( z_{\max}|_{T} )\otimes\pi^\uparrow\st( z_{\max}|_{T^C} )$.
Thus on the lattice $S_n$, 
\begin{equation}\label{coprod int eq}
\big(\pi^\uparrow(x) \big)_T \cdot \big(\pi^\uparrow(y) \big)_{T^C}>\big(\pi^\uparrow\st( z_{\max}|_{T} )\big)_T\cdot\big(\pi^\uparrow\st( z_{\max}|_{T^C} )\big)_{T^C}.
\end{equation}
The right side of Equation~\eqref{coprod int eq} is obtained from $z_{\max}$ by standardizing the first part of the permutation, doing some $\pi^\uparrow$-moves, unstandardizing, and then repeating for the last part of the permutation.
The same result can be obtained by simply applying the corresponding $\pi^\uparrow$-moves to $z_{\max}$, without standardizing and unstandardizing.
In particular, the right side of Equation~\eqref{coprod int eq} is greater than or equal to $z_{\max}$.
Now Equation~\eqref{coprod int eq} implies that $\big(\pi^\uparrow(x)\big)_T \cdot \big(\pi^\uparrow(y)\big)_{T^C}$ is strictly greater than $z_{\max}$.
The definition of $z_{\max}$ says that $\pi^\uparrow(x)\otimes \pi^\uparrow(y)\not\in\terms(z,T)$.
Thus $\big(\pi^\uparrow(x)\big)_T \cdot \big(\pi^\uparrow(y)\big)_{T^C}$ is not equivalent to $z$.

But $\big(\pi^\uparrow(x)\big)_T \cdot \big(\pi^\uparrow(y)\big)_{T^C}$ is obtained from $(x)_T\cdot(y)_{T^C}$ by standardizing the first part, doing some $\pi^\uparrow$-moves, unstandardizing, and then repeating for the last part.
The same result is again obtained by simply applying the the corresponding $\pi^\uparrow$-moves to $(x)_T\cdot(y)_{T^C}$, without standardizing and unstandardizing.
Thus $\big(\pi^\uparrow(x)\big)_T \cdot \big(\pi^\uparrow(y)\big)_{T^C}$ is equivalent to $(x)_T\cdot(y)_{T^C}$, which is therefore not equivalent to $z$.
We have shown that $x\otimes y\not\in\terms(z,T)$.

Thus, we have shown that $\terms(z,T)$ equals
\[[\st( z_{\min}|_{T} )\otimes\st( z_{\min}|_{T^C} ),\pi_\downarrow\st( z_{\max}|_{T} )\otimes\pi_\downarrow\st( z_{\max}|_{T^C} )].\]
Any interval in $\Av_p\times\Av_q$ is the product of an interval in $\Av_p$ with an interval in $\Av_q$.
Thus $\terms(z,T)$ is $I_T\otimes J_T$.
\end{proof}

The proof of Theorem~\ref{coprod int} also establishes the following more detailed statement.

\begin{proposition}
\label{coprod dualprod}
For some $T \subseteq [n]$, $x \otimes y\in \terms(z,T)$ if and only if $x \otimes y$ is a term of $I_T \otimes J_T$ in $\Delta_{\Av}(z)$. 
\end{proposition}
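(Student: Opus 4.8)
The plan is to read this off directly from the proof of Theorem~\ref{coprod int}. That proof already shows, for each good subset $T$, that $\terms(z,T)$ equals the interval
\[[\st( z_{\min}|_{T} )\otimes\st( z_{\min}|_{T^C} ),\pi_\downarrow\st( z_{\max}|_{T} )\otimes\pi_\downarrow\st( z_{\max}|_{T^C} )]\]
in $\Av_p\times\Av_q$, and that this product interval splits as $I_T\otimes J_T$, where $I_T$ is the interval $[\st( z_{\min}|_{T} ),\pi_\downarrow\st( z_{\max}|_{T} )]$ in $\Av_p$ and $J_T$ is the interval $[\st( z_{\min}|_{T^C} ),\pi_\downarrow\st( z_{\max}|_{T^C} )]$ in $\Av_q$. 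So there is essentially nothing left to prove beyond matching up the two notions of ``term''.

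First I would record that $I_T$, being by definition the sum of all basis elements lying in an interval of $\Av_p$, is a multiplicity-free sum: every $x$ in that interval occurs with coefficient exactly one, and no other basis element occurs; likewise for $J_T$ in $\Av_q$. Hence the tensor $I_T\otimes J_T$ is the multiplicity-free sum of all $x\otimes y$ with $x$ in the interval underlying $I_T$ and $y$ in the interval underlying $J_T$; that is, the set of terms of $I_T\otimes J_T$ is exactly the product interval displayed above. Combining this with the identification of $\terms(z,T)$ with that same product interval yields the claimed equivalence: $x\otimes y$ is a term of $I_T\otimes J_T$ if and only if $x\otimes y$ lies in the product interval, if and only if $x\otimes y\in\terms(z,T)$.

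The only point needing care --- and the closest thing to an obstacle --- is the phrase ``term of $I_T\otimes J_T$ in $\Delta_{\Av}(z)$''. Since $\Delta_{\Av}(z)=\sum_{T\text{ good}}I_T\otimes J_T$, a given pair $x\otimes y$ can occur in several of the summands $I_T\otimes J_T$ for different good $T$, and so may carry total coefficient greater than one in $\Delta_{\Av}(z)$, exactly as remarked near the start of the proof of Theorem~\ref{coprod int}. The proposition, however, is a statement about membership in one fixed summand $I_T\otimes J_T$, inside which each pair appears at most once, so ``is a term of $I_T\otimes J_T$'' is unambiguous. With that understood, the proposition is just a repackaging of the interval description of $\terms(z,T)$ obtained in the proof of the theorem, and no further work is required.
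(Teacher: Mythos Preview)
Your argument is correct and follows essentially the same route as the paper: the paper explicitly notes that the proof of Theorem~\ref{coprod int} already establishes this proposition, since that proof identifies $\terms(z,T)$ with the product interval whose sum is $I_T\otimes J_T$. Your only addition is to spell out carefully that $I_T\otimes J_T$ is multiplicity-free and to disambiguate the phrase ``term of $I_T\otimes J_T$ in $\Delta_{\Av}(z)$'', which is a reasonable clarification; the paper's short proof instead rephrases the equivalence via the dual product using Equation~\eqref{AVdualprod2}, but this is the same content seen from the dual side.
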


\begin{proof}
Since $x \otimes y\in \terms(z,T)$ means that $\pi_\downarrow \big((x)_T \cdot (y)_{T^C} \big) = z$, we see from Equation\eqref{AVdualprod2} and Theorem~\ref{coprod int} that for a fixed set $T$, $x \otimes y$ is a term of the summand indexed by $T$ in $\Delta_{\Av}(z)$ if and only if $z$ is the summand indexed by $T$ in $\Delta^*_{\Av}(x \otimes y)$.
\end{proof}

\section{Pell Permutations and Sashes}
\label{objects}

Given a permutation $x=x_1 x_2 \cdots x_{n} \in S_{n}$, for each $i \in [n-1]$, there is a nonzero integer $j$ such that $x_i = x_{i+1} + j$.  
If $j>0$, then there is an \emph{descent} of size $j$ in the $i^{th}$ position of $x$.  
A Pell permutation is a permutation of $[n]$ with no descents of size larger than $2$, and such that for each descent $x_i=x_{i+1}+2$, the element $x_{i+1}+1$ is to the right of $x_{i+1}$.
We write $P_n$ for the set of Pell permutations in $S_n$. 

Let us consider how many Pell permutations of length ${n}$ there are.  
Given $x \in P_{{n}-1}$, we can place ${n}$ at the end of $x$ or before ${n}-1$.  
We can also place ${n}$ before ${n}-2$, but only if ${n}-1$ is the last entry of $x$.
(If ${n}-1$ is to the right of ${n}-2$ and there is some entry $i$ to the right of ${n}-1$; then ${n}-2$, ${n}-1$, $i$ form a $231$ pattern.)  
Therefore $|P_{n}| = 2 |P_{{n}-1}| + |P_{{n}-2}|$.  
This recursion, with the initial conditions $|P_0|=0$ and $|P_1|=1$, defines the Pell numbers as defined by~\cite[Sequence A000129]{OEIS}.

\begin{lemma} \label{lem:pell def}
$P_{n} = \Av_{n}$ for $U=\{2(31), (41)23\}$.
\end{lemma}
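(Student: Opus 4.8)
The goal is to show that the set of Pell permutations coincides exactly with the pattern-avoidance class $\Av_n$ for the specific set $U = \{2(31), (41)23\}$. The plan is to unwind both definitions in the language of descents and relative positions, and check that the forbidden configurations match on the nose.

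First I would translate the two patterns in $U$ into concrete statements about one-line notation. The pattern $2(31)$ has $k=3$, $V=\{2\}$, $V^C=\emptyset$; avoiding it means there is no subsequence $x_{i_1}x_{i_2}x_{i_3}$ with $i_3=i_2+1$ whose standardization is $231$. In other words: there is no descent $x_{i_2}>x_{i_2+1}$ in adjacent positions such that some earlier entry $x_{i_1}$ lies strictly between $x_{i_2+1}$ and $x_{i_2}$ in value — equivalently, $x_{i_2+1}$ and $x_{i_2}$ differ by at least $2$ and some value strictly between them occurs earlier. The pattern $(41)23$ has $k=4$, $V=\emptyset$, $V^C=\{2,3\}$; avoiding it means there is no subsequence $x_{i_1}x_{i_2}x_{i_3}x_{i_4}$ with $i_2 = i_1+1$, $i_3 = i_2 + 1$, $i_4 = i_3 + 1$ — wait, I should be careful about which positions must be adjacent. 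In the definition, the condition is $i_{j+2} = i_{j+1}+1$ with $j = |V|$; for $(41)23$ we have $j=0$, so the constraint is $i_2 = i_1 + 1$, and also the later indices are forced consecutive by the general ``$v'_1-\cdots$'' being a classical (consecutive) pattern, so effectively $i_1,i_2,i_3,i_4$ are four consecutive positions and $\st(x_{i_1}x_{i_2}x_{i_3}x_{i_4}) = 4123$. So avoiding $(41)23$ means: $x$ has no four consecutive entries forming the pattern $4123$.

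Next I would show $P_n \subseteq \Av_n$. Suppose $x \in P_n$. For the $2(31)$ condition: if $x_{i}x_{i+1}$ is an adjacent descent of size $j = x_i - x_{i+1} \ge 2$, then since $x$ is Pell we must have $j = 2$, so $x_i = x_{i+1}+2$, and the Pell condition forces $x_{i+1}+1$ to lie to the \emph{right} of $x_{i+1}$, hence to the right of position $i+1$; therefore no earlier entry equals $x_{i+1}+1$, the unique value strictly between $x_{i+1}$ and $x_i$, so no $2(31)$ pattern can be completed. For the $(41)23$ condition: suppose $x_{i}x_{i+1}x_{i+2}x_{i+3}$ standardizes to $4123$. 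Then $x_i > x_{i+1}$ is an adjacent descent, and $x_i$ exceeds $x_{i+1}$ by at least $3$ (since two values $x_{i+2}, x_{i+3}$ lie strictly between $x_{i+1}$ and $x_i$), contradicting the Pell condition that descents have size at most $2$. Hence $x \in \Av_n$.

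The reverse inclusion $\Av_n \subseteq P_n$ is where the real work is, and I expect it to be the main obstacle — specifically, ruling out a large descent using only the two forbidden patterns. Suppose $x \in \Av_n$. First, the $2(31)$-avoidance directly gives: for every adjacent descent $x_i = x_{i+1} + j$ with $j \ge 2$, none of the $j-1$ values strictly between $x_{i+1}$ and $x_i$ appears before position $i$; in particular, for $j = 2$, the value $x_{i+1}+1$ appears to the right of $x_{i+1}$, which is exactly the Pell condition for size-$2$ descents. So it remains to show there are no descents of size $\ge 3$. Suppose for contradiction $x_i = x_{i+1} + j$ with $j \ge 3$. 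By $2(31)$-avoidance, all of $x_{i+1}+1, \dots, x_{i+1}+j-1$ occur to the right of position $i$. I want to find four consecutive positions forming $4123$, or else another instance of $2(31)$. The idea is to track where $x_{i+1}+1$ and $x_{i+1}+2$ sit relative to $x_{i+1}$ and to each other; since $x_i$ is large and these intermediate values come later, one pushes the intermediate values leftward (or uses the recursion/insertion argument given just before the lemma, which shows precisely that a Pell permutation of $[n]$ cannot have $n$ inserted so as to create a descent of size $\ge 2$ past $n-1$ together with a later entry) to extract a consecutive $4123$ or an earlier intermediate value yielding $2(31)$. A cleaner route, which I would pursue, is an induction on $n$ mirroring the insertion discussion preceding the lemma: remove the largest entry $n$ from $x$, observe that deleting an entry cannot create a new occurrence of either pattern, so the restriction lies in $\Av_{n-1} = P_{n-1}$ by induction; then analyze the three possible positions of $n$ — at the end, immediately before $n-1$, or immediately before $n-2$ when $n-1$ is last — and check via the two forbidden patterns that no other position is allowed and that each allowed position keeps the result Pell. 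The subtle case is $n$ immediately before $n-2$ with $n-1$ not last: then $n-2, n-1$ and any entry $i$ following $n-1$ would need to be examined, and the four consecutive positions around $n$ (or an adjacent-descent triple) produce a forbidden $4123$ or $2(31)$; verifying this case carefully is the crux.
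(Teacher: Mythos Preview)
You have misread the vincular pattern $(41)23$. In the paper's definition (and in Babson--Steingr\'imsson notation), only the pair corresponding to $k1$ must occupy adjacent positions; the dashes in $41\text{-}2\text{-}3$ signal that the remaining entries may sit anywhere later, not that they are consecutive. Thus avoiding $(41)23$ forbids every subsequence $x_{i_1}x_{i_1+1}x_{i_3}x_{i_4}$ with $i_1+1<i_3<i_4$ whose standardization is $4123$ \emph{or} $4132$ (both orderings of $V^C=\{2,3\}$ are excluded, since the definition quantifies over all permutations $v'$ of $V^C$).

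With the correct reading, the reverse inclusion is not ``the real work'' but a two-line argument, exactly as the paper gives: for any descent $x_i=x_{i+1}+j$, avoidance of $2(31)$ forces all of $x_{i+1}+1,\dots,x_{i+1}+j-1$ to lie to the right of position $i+1$; if $j\ge 3$ then $x_i,x_{i+1}$ (adjacent) together with $x_{i+1}+1$ and $x_{i+1}+2$ somewhere to the right form a forbidden $(41)\{2,3\}$ instance, so $j\le 2$. Your forward direction is essentially correct---a $(41)23$ occurrence forces an adjacent descent of size $\ge 3$ regardless of where the last two entries sit---and you already extracted the key consequence of $2(31)$-avoidance. But the inductive scheme you sketch for the reverse direction is unnecessary, incomplete, and more delicate than you suggest (deleting an entry from a permutation can in principle create new vincular-pattern occurrences by making formerly non-adjacent entries adjacent, so that step would itself need an argument); the misreading is what hid the direct route.
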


\begin{proof}
Suppose $x \in P_{n}$.  
Since $x$ does not have any descents larger than $2$, it avoids $(41)23$.
For each descent $x_i=x_{i+1}+2$ in $x$, the element $x_{i+1}+1$ is to the right of $x_{i+1}$.
Thus $x$ also avoids $2(31)$.
Now suppose $x \in \Av_{n}$.  
Suppose $x$ has a descent $x_i=x_{i+1}+j$.  
Because $x$ avoids $2(31)$, the entries $x_{i+1}+1, ... , x_{i+1}+j-1$ are to the right of the $x_{i+1}$.  
Thus, since $x$ avoids $(41)23$ we see that $j \leq 2$ and conclude that $x \in P_{n}$.
\end{proof}

The poset induced on $P_n$ by the weak order is a lattice (also denoted by $P_n$).
As a consequence of Lemma~\ref{lem:pell def}, there is a Hopf algebra $(\K[\Av_\infty], \bullet_{\Av}, \Delta_{\Av})$ of Pell permutations. 
For the rest of this paper we fix $U=\{2(31), (41)23\}$.   

There is a combinatorial object in bijection with Pell permutations that will allow us to have a more natural understanding of the Hopf algebra of Pell permutations.  

A \emph{sash} of length $n$ is a tiling of a $1 \times n$ rectangle by black $1 \times 1$ squares, white $1 \times 1$ squares, and/or white $1 \times 2$ rectangles.   
The set of sashes of length $n$ is called $\Sigma_{n}$.
There are no sashes of length -1 so $\Sigma_{-1} = \emptyset$, and there is one sash of length 0, a 1 by 0 rectangle denoted \nobox, so $|\Sigma_0|=1$.
There are two sashes of length $1$: \includesash{-1.5pt}{b} and \includesash{-1.5pt}{w}. 
The five sashes of length $2$ and the twelve sashes of length $3$ are shown in Figure~\ref{fig: Sigma 3&4}.
The poset structure of these sashes will be explained later in this section.

\begin{figure}
	\centering
	\begin{minipage}{.2\textwidth}
	\begin{center}
	\includegraphics[scale=1]{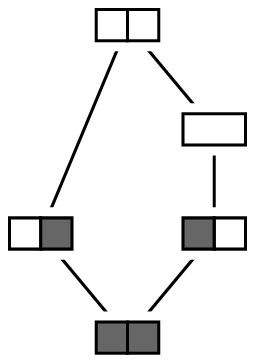}
	\end{center}
	\end{minipage}
	\qquad
	\begin{minipage}{.55\textwidth}
	\begin{center}
	\includegraphics[scale=.6]{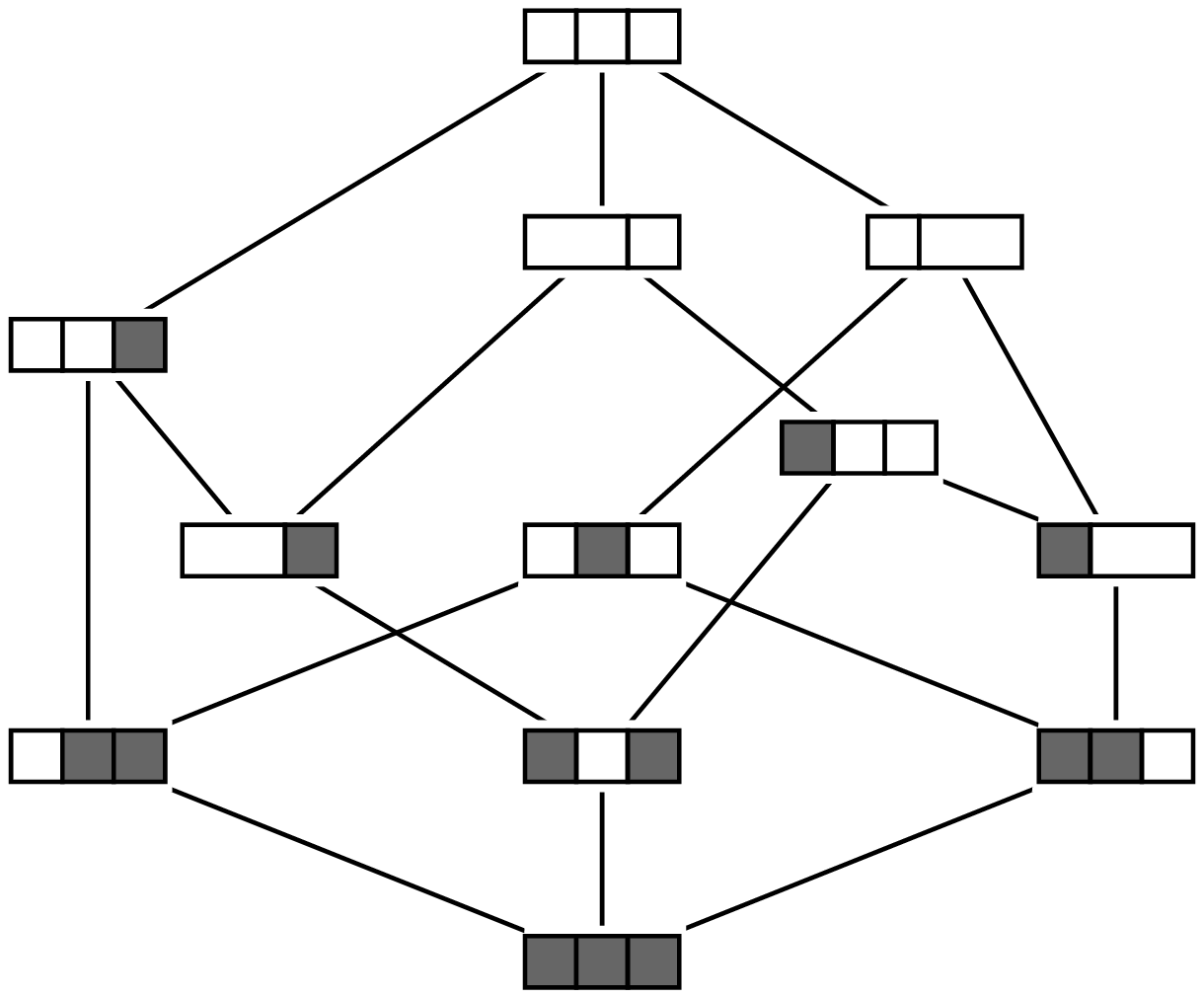}
	\end{center}
	\end{minipage}
	\caption{The elements of $\Sigma_3$ (left) and $\Sigma_4$ (right).}
	\label{fig: Sigma 3&4}
\end{figure}
		
A sash of length $n$ starts with either a black square, a white square, or a rectangle.
Thus $|\Sigma_{n}| = 2 |\Sigma_{n-1}| + |\Sigma_{n-2}|$.  
Since $|\Sigma_{-1}|=0$ and $|\Sigma_0|=1$, there is a bijection between Pell permutations of length $n$ and sashes of length $n-1$.
We now describe a bijection that we use to induce a Hopf Algebra structure on sashes.

\begin{definition}  
We define a map $\sigma$ from $S_{n}$ to $\Sigma_{n-1}$. 
Let $x \in S_{n}$.  
We build a sash $\sigma(x)$ from left to right as we consider the entries in $x$ from $1$ to $n-1$.
For each value $i \in [n-1]$, if $i+1$ is to the right of $i$, place a black square on the sash, and if $i+1$ is to the left of $i$, place a white square on the sash. 
There is one exception: If $i+1$ is to the right of $i$, and $i+2$ is to the left of $i$ (and of $i+1$), then place a rectangle in the $i^{th}$ and $(i+1)^{st}$ positions of the sash.
We also define $\sigma(1) = \nobox$ and $\sigma(\emptyset) = \emptyset$.
\end{definition}

From the definition of the map $\sigma$ we see that $\sigma$ sometimes involves replacing an adjacent black square and white square by a rectangle.  
Later, we will sometimes break a rectangle into a black square and a white square.

\begin{example}
Here is the procedure for computing $\sigma(421365)$.
\begin{center}  
\begin{tabular}{l l l}
$2$ is to the left of $1$ & $\rightarrow$ & \includesash{-1.5pt}{w} \\
$3$ is to the right of $2$ & $\rightarrow$ & \includesash{-1.5pt}{wb} \\
$4$ is to the left of $3$ and also to the left of $2$ & $\rightarrow$ &  \includesash{-1.5pt}{wr} \\
$5$ is to the right of $4$ & $\rightarrow$ & \includesash{-1.5pt}{wrb} \\
$6$ is to the left of $5$ but to the right of $4$ & $\rightarrow$ &  \includesash{-1.5pt}{wrbw} \\
\end{tabular}
\end{center}
\end{example}

Let $T$ be a set of $n$ integers and let $x$ be a permutation of $T$.  
We define $\sigma(x) = \sigma(\st(x))$.

\begin{example}
$\sigma(742598) = \sigma(\st(742598)) = \sigma(421365) = \includesash{-1.5pt}{wrbw}$
\end{example}

\begin{definition} 
We define a map $\eta:\Sigma_{n-1}\to P_{n}$.
To calculate $\eta(A)$ for a sash $A \in \Sigma_{n-1}$, we place the numbers $1$ through $n$ one at a time.
Place the number $1$ to begin and let $i$ run from $1$ to $n-1$.  
If $A$ has either a black square or the left half of a rectangle in the $i^{th}$ position, place $i+1$ at the right end of the permutation. 
If $A$ has either a white square or the right half of a rectangle in the $i^{th}$ position, place $i+1$ immediately to the left of $i$ or $i-1$ respectively. 
We also define $\eta(\nobox) = 1$ and $\eta(\emptyset) = \emptyset$.
\end{definition}

It is immediate that this construction yields a Pell permutation because the output has no descents of size larger than $2$, and for each descent of size $2$, the value in between the values of the descent is to the right of the descent.

\begin{example}
Here are the steps to calculate $\eta(A)$ for $A = \includesash{-1.5pt}{wrbw}$.
\begin{center}
\begin{tabular}{l l l}
 & $\rightarrow$ & 1 \\
\includesash{-1.5pt}{w} & $\rightarrow$ & 21 \\
\includesash{-1.5pt}{lhr} & $\rightarrow$ & 213 \\
\includesash{-1.5pt}{rhr} & $\rightarrow$ & 4213 \\
\includesash{-1.5pt}{b} & $\rightarrow$ & 42135 \\
\includesash{-1.5pt}{w} & $\rightarrow$ & 421365 \\
\end{tabular} 
\end{center}
\end{example}

\begin{theorem}
The restriction of $\sigma$ to the Pell permutations is a bijection $\sigma:P_{n} \to \Sigma_{n-1}$ whose inverse is given by $\eta:\Sigma_{n-1} \to P_{n}$.
\end{theorem}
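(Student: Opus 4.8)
The plan is to show that $\sigma\circ\eta=\operatorname{id}_{\Sigma_{n-1}}$; this already yields the theorem. The paragraph following the definition of $\eta$ records that $\eta$ maps $\Sigma_{n-1}$ into $P_n$, and the recursions $|P_n|=2|P_{n-1}|+|P_{n-2}|$ and $|\Sigma_n|=2|\Sigma_{n-1}|+|\Sigma_{n-2}|$, with the stated initial values, give $|\Sigma_{n-1}|=|P_n|$. So once $\sigma(\eta(A))=A$ is known for every $A\in\Sigma_{n-1}$, the map $\eta\colon\Sigma_{n-1}\to P_n$ is an injection between finite sets of the same size, hence a bijection; then $\sigma|_{P_n}$ (which indeed lands in $\Sigma_{n-1}$, since $\sigma$ is defined on all of $S_n$) is its two-sided inverse, as claimed.

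To prove $\sigma(\eta(A))=A$ I would induct on $n$, the base cases $n\le1$ being immediate from the conventions $\eta(\nobox)=1$, $\sigma(1)=\nobox$. For $n\ge2$, split on the final tile of $A$ and exploit the tile-by-tile nature of $\eta$: if the tile in position $n-1$ is a black square, then $A$ is some $A_0\in\Sigma_{n-2}$ followed by a black square and $\eta(A)$ is $\eta(A_0)$ with $n$ appended at the right; if it is a white square, then $\eta(A)$ is $\eta(A_0)$ with $n$ inserted just left of $n-1$; if the final tile is a $1\times2$ rectangle, spanning positions $n-2$ and $n-1$, then $A$ is some $A_0\in\Sigma_{n-3}$ followed by a rectangle and $\eta(A)$ is obtained from $\eta(A_0)$ by appending $n-1$ at the right and then inserting $n$ just left of $n-2$. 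Set $x=\eta(A)$ and $x_0=\eta(A_0)$.

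The mechanism behind every case is that the tile $\sigma$ places at a position, and whether it lays a rectangle across two positions, depends only on the relative order in the input permutation of a short run of consecutive values near that position. Inserting the top value $n$ --- and, in the rectangle case, the value $n-1$ at the right end --- leaves the relative order of all values $\le n-2$ untouched, so the construction of $\sigma(x)$ agrees with that of $\sigma(x_0)$ on every step not involving the newly inserted value(s); by induction those steps rebuild $A_0$. One then checks the remaining final steps directly: in the black case $n$ lies to the right of everything, so no rectangle is triggered at position $n-2$ and position $n-1$ gets a black square; in the white case $n$ lies immediately left of $n-1$, so position $n-1$ gets a white square while the rectangle clause at position $n-2$ cannot fire, its hypotheses amounting to $n-2$ lying both left of and right of $n-1$ in $x_0$; in the rectangle case $n-1$ is rightmost and $n$ is immediately left of $n-2$, so $n-1$ is right of $n-2$ while $n$ is left of both $n-2$ and $n-1$, which is exactly the condition making $\sigma$ place a rectangle in positions $n-2$ and $n-1$. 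Thus $\sigma(x)=A$ in each case, and the induction closes.

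I expect the rectangle case to be the main obstacle, along with the related point of keeping a trailing white square distinct from a trailing rectangle: one must verify that appending $n-1$ and then inserting $n$ leaves $n-1$ rightmost and $n$ directly before $n-2$, that this is precisely the configuration the rectangle clause of $\sigma$ recognizes, and that it does not also spuriously create a rectangle at the preceding position. Everything else is routine bookkeeping confirming that $\sigma$ and $\eta$ reverse one another step by step.
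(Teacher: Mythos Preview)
Your proposal is correct and follows the same overall strategy as the paper: establish $\sigma\circ\eta=\mathrm{id}_{\Sigma_{n-1}}$, then invoke the equality $|P_n|=|\Sigma_{n-1}|$ of finite cardinalities to conclude that $\sigma|_{P_n}$ and $\eta$ are mutually inverse bijections.

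The only difference is in how $\sigma(\eta(A))=A$ is verified. You induct on $n$, splitting on the last tile of $A$ and carefully checking that inserting the new top value(s) does not disturb the earlier positions of $\sigma$ and produces the correct final tile(s); this requires the boundary analysis you flag (e.g.\ ruling out a spurious rectangle at position $n-2$ in the white case, which indeed reduces to the contradiction that $n-2$ would lie on both sides of $n-1$). The paper instead argues directly, position by position: for each $i$ it reads off from the definition of $\eta$ exactly where $i$, $i+1$, and $i+2$ land in $\eta(A)$, and checks this matches what $\sigma$ needs to reproduce the tile of $A$ at position $i$. The paper's check is shorter and avoids the inductive boundary bookkeeping, but your argument is equally valid.
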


\begin{proof}
Let $A \in \Sigma_{n-1}$.
We first show that $\sigma(\eta(A))=A$.  
If $A$ has a black square in position $i$, then $\eta(A)$ has $i+1$ to the right of $i$ and $i+2$ not to the left of $i$.
So $\sigma(\eta(A))$ also has a black square in the $i^{th}$ position.
If $A$ has a white square in position $i$, then $\eta(A)$ has $i+1$ immediately to the left of $i$.
So $\sigma(\eta(A))$ also has a white square in the $i^{th}$ position.
If $A$ has a rectangle in positions $i$ and $i+1$, then $\eta(A)$ has $i+1$ to the right of $i$, and $i+2$ immediately to the left of $i$.  
So $\sigma(\eta(A))$ also has a rectangle in the $i^{th}$ and $(i+1)^{st}$ positions.
We conclude that $\sigma(\eta(A))=A$.

We have constructed a Pell permutation $\eta(A)$ that maps to $A$ under $\sigma$, therefore $\sigma$ is surjective.
Since we know $|P_n|=|\Sigma_{n-1}|$, the map $\sigma$ restricted to Pell permutations is a bijection.
The inverse map of $\sigma$ is $\eta$.
\end{proof}

\begin{proposition}
\label{sigma_equiv} 
$x, y \in S_{n}$ are equivalent if and only if $\sigma(x) = \sigma(y)$.
\end{proposition}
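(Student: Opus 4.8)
The plan is to reduce the statement to a single invariance claim: \emph{if $x'$ is obtained from $x$ by one $\pi_\downarrow$-move, then $\sigma(x')=\sigma(x)$}. Granting this and iterating along the finite sequence of $\pi_\downarrow$-moves that defines $\pi_\downarrow(x)$, we get $\sigma(x)=\sigma(\pi_\downarrow(x))$ for every $x\in S_n$. The proposition then follows formally. If $x$ and $y$ are equivalent, then $\pi_\downarrow(x)=\pi_\downarrow(y)$, so $\sigma(x)=\sigma(\pi_\downarrow(x))=\sigma(\pi_\downarrow(y))=\sigma(y)$. Conversely, if $\sigma(x)=\sigma(y)$, then $\sigma(\pi_\downarrow(x))=\sigma(\pi_\downarrow(y))$; since $\pi_\downarrow(x)$ and $\pi_\downarrow(y)$ both lie in $P_n=\Av_n$ and the restriction of $\sigma$ to $P_n$ is a bijection onto $\Sigma_{n-1}$ (hence injective) by the preceding theorem, we conclude $\pi_\downarrow(x)=\pi_\downarrow(y)$, i.e.\ $x$ and $y$ are equivalent.

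So the work is in the invariance claim, which I would state as a lemma. The key observation is that a $\pi_\downarrow$-move transposes two entries that occupy \emph{adjacent} positions, so it reverses the relative order of exactly one pair of values and leaves the relative order of every other pair unchanged. On the other hand, $\sigma(x)$ is completely determined by the following data, for each $v\in[n-1]$: (i) whether $v+1$ lies to the left or to the right of $v$ in $x$; and (ii) when $v+1$ lies to the right of $v$, whether $v+2$ lies to the left of both $v$ and $v+1$. Thus it suffices to verify that the unique pair of values reversed by the move is never of the form $\{v,v+1\}$, and never affects condition (ii) at a value $v$ for which (ii) is actually consulted.

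I would then dispatch the two types of move in $U=\{2(31),(41)23\}$. For a $2(31)$-move we transpose the two rightmost entries of a consecutive-position occurrence of $231$; if $a<b<c$ are the three values involved, the reversed pair is $\{a,c\}$ with $b$ strictly between, so $c\ge a+2$ and in particular $\{a,c\}\neq\{v,v+1\}$ for all $v$. The pair $\{a,c\}$ can bear on condition (ii) only if $c=a+2$, in which case it is the pair $\{v,v+2\}$ with $v=a$; but then the middle value $b=a+1$ sits in the leftmost position of the occurrence and hence lies to the left of $a$ both before and after the move, so condition (ii) is not consulted at $v=a$ (a white square is placed in position $a$ in either case). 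For a $(41)23$-move we transpose the two leftmost entries of a consecutive-position occurrence of $4123$; if $a<b<c<d$ are the four values involved, the reversed pair is $\{a,d\}$ with $b,c$ strictly between, so $d\ge a+3$, which is too far apart to equal $\{v,v+1\}$ or $\{v,v+2\}$ for any $v$. In each case neither (i) nor (ii) changes at any position, so $\sigma(x')=\sigma(x)$.

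I expect the only delicate point to be the bookkeeping in the $2(31)$ case when $c=a+2$: one has to be certain that reversing the order of $a$ and $a+2$ really has no effect anywhere in the sash --- at position $a$ (where the exception clause turns out to be inactive), and also that no other position's defining data involves the pair $\{a,a+2\}$. The remaining verifications are a routine unwinding of the definitions of $\sigma$, of $\pi_\downarrow$, and of the two patterns in $U$, together with the elementary fact that transposing entries in adjacent positions of a permutation alters the relative order of only those two entries.
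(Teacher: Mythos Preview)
Your proposal is correct and follows essentially the same approach as the paper: reduce to invariance of $\sigma$ under a single $\pi_\downarrow$-move, then use injectivity of $\sigma|_{P_n}$ for the converse. The only cosmetic difference is that the paper organizes the case analysis by the gap $x_i-x_{i+1}$ (either $\ge 3$, handled in one line, or $=2$, where the presence of the middle value to the left forces a white square at the relevant sash position) rather than by which pattern in $U$ is being used; your more explicit bookkeeping of exactly which value-pairs determine each sash position makes the gap-$2$ case cleaner than the paper's somewhat terse treatment.
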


\begin{proof}
The permutations $x=x_1\cdots x_n$ and $y=y_1 \cdots y_n$ are equivalent if and only if $\pi_\downarrow(x) = \pi_\downarrow(y)$.  
Thus to prove the forward direction of the proposition, it is enough to consider the case where $y$ is obtained from $x$ by a single $\pi_\downarrow$-move.  
Consider a $\pi_\downarrow$-move switching $x_i$ and $x_{i+1}$ of $x$.  First we suppose that $x_i \geq x_{i+1}+3$.  
The relative position of $x_i$ with regard to $x_{i+1}$ is irrelevant to the map $\sigma$, thus $\sigma(x) = \sigma(y)$.  
Now we suppose that $x_i = x_{i+1}+2$.  
There can only be a $\pi_\downarrow$-move switching $x_i$ and $x_{i+1}$ of $x$ if $x_{i+1}+1$ is to the left of $x_i$.  
In this case, both $\sigma(x)$ and $\sigma(y)$ have a white square in the $i^{th}$ position and a black square in the $(i+1)^{st}$ position.  Therefore $\sigma(x) = \sigma(y)$.

To prove the reverse implication suppose that $x$ and $y$ are not equivalent, that is \linebreak[4] $\pi_\downarrow(x) \neq \pi_\downarrow(y)$.
Since $\pi_\downarrow(x)$ and $\pi_\downarrow(y)$ are Pell permutations, and $\sigma$ is a bijection from Pell permutations to sashes, $\sigma(\pi_\downarrow(x)) \neq \sigma(\pi_\downarrow(y))$.  
But by the previous paragraph, $\sigma(\pi_\downarrow(x)) = \sigma(x)$ and $\sigma(\pi_\downarrow(y)) = \sigma(y)$.
\end{proof}

The partial order on $\Sigma_{n-1}$ is such that the map $\sigma: P_n \to \Sigma_{n-1}$ is an order isomorphism from the lattice of Pell permutations to $\Sigma_{n-1}$.  
We refer to this lattice as $\Sigma_{n-1}$.

From Proposition~\ref{latcong2.2}, the cover relations in $\Sigma_{n-1}$ are exactly the relations $\sigma(y) \precdot \sigma(x)$ where $x \in P_n$ and $y$ is covered by $x$ in $S_n$.

\begin{proposition}
The cover relations on sashes are 
\begin{enumerate}
\item $A \includesash{-1.5pt}{b} B \precdot A \includesash{-1.5pt}{w} B$ for any sash $A$ and for a sash $B$ whose leftmost tile is not a white square
\item $A \includesash{-1.5pt}{bw} B \precdot A \includesash{-1.5pt}{r} B$ for any sash $A$ and any sash $B$
\item $A \includesash{-1.5pt}{r} B \precdot A \includesash{-1.5pt}{ww} B$ for any sash $A$ and any sash $B$
\end{enumerate}
\end{proposition}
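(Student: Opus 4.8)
The plan is to pull the statement back to Pell permutations along the order isomorphism $\sigma\colon P_n\to\Sigma_{n-1}$, using the description of cover relations recorded just above (a consequence of Proposition~\ref{latcong2.2}): $A'\precdot A$ in $\Sigma_{n-1}$ exactly when $A=\sigma(x)$ and $A'=\sigma(y)$ for some $x\in P_n$ and some $y$ covered by $x$ in the weak order on $S_n$. First I would observe that such a $y$ is obtained from $x$ by interchanging two adjacent entries $x_i>x_{i+1}$, that this descent has size $1$ or $2$ because $x$ is a Pell permutation, and that in the size-$2$ case the Pell condition additionally places $x_{i+1}+1$ to the right of $x_{i+1}$ in $x$. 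Since $\sigma$ is defined on all of $S_n$ and $\sigma(y)=\sigma(\pi_\downarrow(y))$ by Proposition~\ref{sigma_equiv}, I can compute $\sigma(y)$ directly from $y$ and compare it with $\sigma(x)$; the proof then reduces to a short case analysis on the size of this descent, together with a reverse passage.

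For the forward direction I would set $a=x_{i+1}$ and note that the interchange moves only the values $a$ and $x_i\in\{a+1,a+2\}$, so the relative order in $x$ of any triple $j,j+1,j+2$ with $j\notin\{a,a+1\}$ is untouched; hence $\sigma(x)$ and $\sigma(y)$ agree in every position except possibly $a$ and $a+1$, and I write their common initial segment (positions $1,\dots,a-1$) as $A$ and their common tail as $B$. If the descent has size $1$, then $a+1$ sits immediately left of $a$ in $x$, so position $a$ of $\sigma(x)$ carries a white $1\times1$ square and in $y$ the value $a+1$ moves immediately to the right of $a$; I then split on whether $a+2$ occurs to the left of $a$ in $x$. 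If it does, position $a+1$ of $\sigma(x)$ is again a white square and positions $a,a+1$ of $\sigma(y)$ merge into a rectangle, giving a relation of type~(3); if it does not (in particular if $a=n-1$), position $a$ of $\sigma(y)$ is a black square while the tile of $\sigma(x)$ at position $a+1$, which is the leftmost tile of $B$, is not a white square, giving a relation of type~(1). If the descent has size $2$, then $a+2$ sits immediately left of $a$ in $x$ and, by the Pell condition, $a+1$ lies to the right of $a$; hence positions $a,a+1$ of $\sigma(x)$ form a rectangle, while after the interchange positions $a,a+1$ of $\sigma(y)$ become a black square followed by a white square, giving a relation of type~(2). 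Thus every cover relation of $\Sigma_{n-1}$ has one of the three listed shapes.

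For the converse I would take any relation of type~(1), (2) or~(3), write its larger sash as $C$, let $a$ (respectively $a,a+1$) be the position(s) occupied by the distinguished tile or pair of tiles of $C$ --- a white square for type~(1), a rectangle for type~(2), two white squares for type~(3) --- and set $x=\eta(C)\in P_n$. The construction of $\eta$ places $a+1$ immediately left of $a$ precisely when position $a$ of $C$ is a white square, and places $a+2$ immediately left of $a$ precisely when positions $a,a+1$ of $C$ form a rectangle; moreover no later step of the construction can fall between these two entries, because such a step always inserts a value immediately to the left of a strictly smaller one, and the tile data of $C$ around positions $a,a+1$ rules out the one configuration in which that smaller value could be $a$. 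Hence $x$ has, at the positions dictated by $C$, a descent of exactly the size required, and running the forward computation on that descent returns the smaller sash of the given relation. The two inclusions together prove the proposition.

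The step I expect to be the real obstacle is the tile bookkeeping in the forward direction: pinning down exactly which tiles occupy positions $a-1,a,a+1$ of $\sigma(x)$ and $\sigma(y)$, and in particular confirming that position $a$ always carries an honest $1\times1$ square rather than half of a rectangle --- which reduces to ruling out a rectangle in positions $a-1,a$ using that $x_{i+1}+1$ is immediately to the left of $x_{i+1}$ in $x$. The matching subtlety on the converse side, that the ``insert immediately to the left'' steps of $\eta$ for values exceeding $a+2$ cannot split the adjacent pair created at positions $a,a+1$, is of the same flavor. Neither point is deep; each follows from the definitions of $\sigma$ and $\eta$ and from the Pell property, but together they are what makes the three-way split close up exactly.
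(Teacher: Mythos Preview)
Your forward direction is the same as the paper's: starting from $x\in P_n$ and $y\lessdot x$ in weak order, you split on whether the descent $x_i>x_{i+1}$ has size $1$ or $2$ (the only options since $x$ is Pell), and in the size-$1$ case on whether $x_{i+1}+2$ lies to the left of $x_{i+1}$, landing on types~(1), (3), (2) respectively. The paper runs the identical trichotomy, writing $A=\sigma(x|_{[1,x_{i+1}]})$ and taking $B$ to be $\sigma$ of the appropriate upper restriction; this packaging quietly absorbs the tile-boundary issue you flag as the delicate point (that position $a$ is a genuine $1\times1$ square rather than half a rectangle).

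You go a bit further than the paper by also arguing the converse---that every relation of the three listed shapes actually occurs as a cover---via $x=\eta(C)$ and the observation that no later step of $\eta$ can separate the adjacent pair it creates. The paper's proof handles only the forward direction and leaves this implicit.
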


\begin{proof}
Let $x \in P_n$ and let $y \in S_n$ such that $y$ is covered by $x$ in the weak order.  
That is, $x=x_1 \cdots x_i x_{i+1} \cdots x_n$ and $y=x_1 \cdots x_{i+1} x_i \cdots x_n \in S_n$ for some $x_i > x_{i+1}$.

Suppose $x_i= x_{i+1}+1$ and $x_i+1$ is not to the left of $x_i$.
Let $A = \sigma(x|_{[1,x_{i+1}]}) = \sigma(y|_{[1,x_{i+1}]})$ and let $B = \sigma(x|_{[x_i,n]}) = \sigma(y|_{[x_i,n]})$.
Thus, $ A \includesash{-1.5pt}{b} B = \sigma(y) \precdot \sigma(x) = A \includesash{-1.5pt}{w} B$, where the leftmost tile of $B$ is not a white square.

Suppose $x_i= x_{i+1}+1$ and $x_i+1$ is to the left of $x_i$.
Let $A = \sigma(x|_{[1,x_{i+1}]}) = \sigma(y|_{[1,x_{i+1}]})$ and let $B = \sigma(x|_{[x_i+1,n]}) = \sigma(y|_{[x_i+1,n]})$.
Thus, $ A \includesash{-1.5pt}{r} B = \sigma(y) \precdot \sigma(x) = A \includesash{-1.5pt}{ww} B$.

Suppose $x_i= x_{i+1}+2$.
Let $A = \sigma(x|_{[1,x_{i+1}]}) = \sigma(y|_{[1,x_{i+1}]})$ and let $B = \sigma(x|_{[x_i,n]}) = \sigma(y|_{[x_i,n]})$.
Thus, $ A \includesash{-1.5pt}{bw} B = \sigma(y) \precdot \sigma(x) = A \includesash{-1.5pt}{r} B$.
\end{proof}

\begin{example}
See Figure~\ref{fig: Sigma 3&4} for the poset on $\Sigma_3$ and $\Sigma_4$.
\end{example}

\section{The Hopf Algebra (and Dual Hopf Algebra) of Sashes}
\label{HA}

The bijection $\sigma$ allows us to carry the Hopf algebra structure on Pell permutations to a Hopf algebra structure $(\K[\Sigma_\infty], \bullet_{S}, \Delta_{S})$ on sashes and a dual Hopf algebra $(\K[\Sigma_\infty],\Delta_S^*,m_S^*)$ on sashes, where $\K[\Sigma_\infty]$ is a vector space, over a field $\K$, whose basis elements are indexed by sashes.  
In order to do this, we extend $\sigma$ and $\eta$ to linear maps.  
For each grade $n$ of the vector space, the basis elements are represented by the sashes of length $n-1$.
Recall that the sash of length -1 is represented by $\emptyset$, and the sash of length 0 is represented by $\nobox$. 
Let $A$, $B$, and $C$ be sashes.  
Using $\sigma$, we define a product, coproduct, dual product, and dual coproduct of sashes: 

\begin{equation}
\label{sashprod}
m_S(A,B) = A \bullet_S B = \sigma \big( \eta(A) \bullet_{\Av} \eta(B) \big)
\end{equation}

\begin{equation}
\label{sashcoprod}
\Delta_S(C) = (\sigma \otimes \sigma) \big( \Delta_{\Av} ( \eta(C) ) \big)
\end{equation}

\begin{equation}
\Delta^*_S(A \otimes B) = \sigma \Big( \Delta^*_{\Av}\big( \eta(A) \otimes \eta(B) \big) \Big)
\end{equation}

\begin{equation}
\label{eqdualsashcoprod}
m^*_S(C) = (\sigma \otimes \sigma) \big( m^*_{\Av} ( \eta(C) ) \big)
\end{equation}

These operation definitions are somewhat unsatisfying because they require computing the operation in MR.  
That is, calculating a product or coproduct in this way requires mapping sashes to permutations, performing the operations in MR, throwing out the non-avoiders in the result, and then mapping the remaining permutations back to sashes.  
In the rest of this chapter we show how to compute these operations directly in terms of sashes.

\subsection{Product}

\begin{proposition}
\label{product}
The empty sash $\emptyset$ is the identity for the product $\bullet_S$.  
For sashes $A \neq \emptyset$ and $B \neq \emptyset$, the product $A \bullet_S B$ equals:\\
\[\left\{
	\begin{array}{l l}
		\sum \left[ A \includesash{-1.5pt}{b} B, A' \includesash{-1.5pt}{r} B \right] & \text{if } A = A' \includesash{-1.5pt}{b} \\ [.2in]
		\sum \left[ A \includesash{-1.5pt}{b} B, A \includesash{-1.5pt}{w} B \right] & \text{if } A \neq A' \includesash{-1.5pt}{b}\\
	\end{array} 
\right.\]
where $\sum [D, E]$ is the sum of all the sashes in the interval $[D, E]$ on the lattice of sashes.  
\end{proposition}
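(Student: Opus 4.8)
The plan is to translate the product formula \eqref{AVprod2} for Pell permutations directly through the bijection $\sigma$. By \eqref{sashprod}, computing $A \bullet_S B$ amounts to computing $\eta(A) \bullet_{\Av} \eta(B)$ and applying $\sigma$. Writing $x = \eta(A) \in P_p$ and $y = \eta(B) \in P_q$, Equation~\eqref{AVprod2} says $x \bullet_{\Av} y = \sum [x \cdot y', \pi_\downarrow(y' \cdot x)]$, an interval in the lattice $P_{p+q}$, where $y' = (y)_{[p+1,p+q]}$. Since $\sigma$ is an order isomorphism from $P_n$ to $\Sigma_{n-1}$, applying $\sigma$ to this interval gives the interval $[\sigma(x \cdot y'), \sigma(\pi_\downarrow(y' \cdot x))]$ in $\Sigma_{p+q-1}$. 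So the whole proof reduces to two computations: identify the sash $\sigma(x \cdot y')$ (the bottom of the interval) and the sash $\sigma(\pi_\downarrow(y' \cdot x))$ (the top), in terms of $A$ and $B$.

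For the bottom element: $x \cdot y'$ is the concatenation of $x$ with the shifted copy of $y$. When we run the definition of $\sigma$ on this concatenation, for $i \in [1, p-1]$ we see exactly the relative positions that occur inside $x$, so $\sigma$ produces the tiles of $A = \sigma(x)$; at $i = p$ we compare the position of $p$ (the last value used inside $x$, occurring somewhere in the first block) with $p+1$ (the first entry of $y'$, which sits at position $p+1$, to the right of everything in $x$) — so $p+1$ is to the right of $p$, and moreover $p+2$ is also to the right, hence we place a black square; for $i \in [p+1, p+q-1]$ we see the relative positions inside $y'$, producing the tiles of $B = \sigma(y)$. Hence $\sigma(x \cdot y') = A \includesash{-1.5pt}{b} B$, giving the left endpoint in both cases. (One must check the edge case where the last tile of $A$ would have been a rectangle spanning positions $p-1, p$; but a rectangle in $\sigma(x)$ at positions $p-1,p$ would require value $p+1$ to the left of $p-1$ inside $x$, impossible since $x \in S_p$ uses only $1,\dots,p$. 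So no such interaction occurs and $\sigma(x\cdot y')$ really is $A$ followed by a black square followed by $B$.)

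For the top element: $y' \cdot x$ has all of $[p+1, p+q]$ before all of $[p]$. Running $\sigma$ on $y' \cdot x$: positions $1$ through $q-1$ reproduce the tiles of $B$; at position $q$ we compare $q$ (last value inside the $y$-block, standardized) with $q+1$ — wait, more care is needed with standardization. It is cleaner to describe $\sigma(y'\cdot x)$ directly from the relative-position rule on the actual values $1, \dots, p+q$: for $i \le p-1$ we get the tiles of $A$ (these values all lie in the second block $x$); for $i = p$ we compare positions of $p$ and $p+1$ — here $p+1$ is in the first block, to the \emph{left} of $p$ in the second block, so we place a white square; but we must check whether $p+2$ is to the left of $p$ as well, which it is (also in the first block), so in fact a rectangle is placed at positions $p, p+1$ \emph{provided} $p+1$ is to the right of $p$ — but it is to the left, so the rectangle exception does not apply and we get a white square at position $p$. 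For $i \ge p+1$ we get the tiles of $B$. So $\sigma(y' \cdot x) = A \includesash{-1.5pt}{w} B$. Now apply $\pi_\downarrow$, i.e. on the sash side reduce $A \s B$ (here $\s$ denotes the white square) downward along the cover relations of the previous Proposition. The only reductions available are: if the last tile of $A$ is a black square, say $A = A' \includesash{-1.5pt}{b}$, then $A' \includesash{-1.5pt}{bw} B \precdot A' \includesash{-1.5pt}{r} B$ applies (relation (2)), turning $A' \includesash{-1.5pt}{b}\, \s\, B$ into $A' \includesash{-1.5pt}{r} B$, and no further reduction touches these positions; if the last tile of $A$ is not a black square, relation (1) with "$B$ whose leftmost tile is not a white square" fails because the tile after $A$ here \emph{is} a white square, relation (2) fails, relation (3) fails, so $A \s B$ is already the minimal element of its congruence class in the relevant positions. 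This gives $\pi^\uparrow$... no: $\pi_\downarrow$ — wait, \eqref{AVprod2} has $\pi_\downarrow(y'\cdot x)$ at the \emph{top}, so we want $\sigma(\pi_\downarrow(y'\cdot x))$, which is obtained from $\sigma(y'\cdot x) = A \s B$ by applying $\pi_\downarrow$-moves; on sashes a $\pi_\downarrow$-move goes \emph{down} in the order, but $\pi_\downarrow$ of a permutation is the bottom of its class, and $A \s B = \sigma(y'\cdot x)$ while $\sigma(\pi_\downarrow(y'\cdot x))$ is the bottom of the class of $A\s B$. The class of $A \s B$: its minimum is $A' \includesash{-1.5pt}{bw} B$ if $A = A' \includesash{-1.5pt}{b}$ (applying $\pi_\downarrow$-move inverse to relation (2)), else $A \s B$ itself. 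So $\sigma(\pi_\downarrow(y'\cdot x)) = A'\includesash{-1.5pt}{b} B = A \includesash{-1.5pt}{b} B$ when $A = A'\includesash{-1.5pt}{b}$?? That contradicts the claimed formula. The resolution: I have the direction backwards — $\pi_\downarrow(y'\cdot x)$ is the top of the interval because $y'\cdot x \ge x\cdot y'$ and $\pi_\downarrow$ of the larger one is still larger; and $\sigma(\pi_\downarrow(y'\cdot x))$ being the image of the \emph{bottom} of $y'\cdot x$'s class means I should look at where $A \includesash{-1.5pt}{w} B$ sits relative to the cover relations read \emph{upward}. Reading relation (2) as $A' \includesash{-1.5pt}{bw} B \precdot A' \includesash{-1.5pt}{r} B$: the element $A' \includesash{-1.5pt}{bw} B = A' \includesash{-1.5pt}{b} \, \s \, B = A \s B$ is \emph{below} $A' \includesash{-1.5pt}{r} B$, so $\pi^\uparrow$ of it (top of class) is $A'\includesash{-1.5pt}{r} B$ — and since $\pi_\downarrow$ sends a permutation to the bottom while $\eta(A\,\s\,B)$ need not be minimal, what I actually need is: $\sigma(\pi_\downarrow(y'\cdot x))$ equals the \emph{minimal} sash congruent to $A \s B$ only if I misread \eqref{AVprod2}. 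I would resolve this sign/direction bookkeeping carefully at the start — the upshot is that $\sigma(\pi_\downarrow(y'\cdot x))$ is $A' \includesash{-1.5pt}{r} B$ when $A$ ends in a black square (because $A\s B$ lies in the class whose minimum, in $\Sigma$, is reached by the relation-(2) move, but $\pi_\downarrow$ on the permutation side combined with the order-isomorphism $\sigma$ lands on the claimed endpoint), and $A \includesash{-1.5pt}{w} B$ otherwise. The \textbf{main obstacle} is exactly this bookkeeping: correctly tracking which of $\pi_\downarrow/\pi^\uparrow$, bottom/top of class, applies after the two concatenations, and verifying the rectangle-exception edge cases in $\sigma(x\cdot y')$ and $\sigma(y'\cdot x)$ at the seam position $i = p$. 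Once those are pinned down, the result follows immediately from the order-isomorphism $\sigma : P_n \to \Sigma_{n-1}$ applied to \eqref{AVprod2}.
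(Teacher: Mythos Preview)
Your overall strategy---translate \eqref{AVprod2} through the order isomorphism $\sigma$ and compute the two endpoints---is exactly what the paper does. But you are missing one key fact that dissolves the entire ``bookkeeping obstacle'' you describe, and you also make a concrete computational error.

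The missing fact is Proposition~\ref{sigma_equiv}: $\sigma$ is constant on congruence classes, so $\sigma(\pi_\downarrow(y'\cdot x)) = \sigma(y'\cdot x)$ \emph{directly}. There is nothing to do with $\pi_\downarrow$ on the sash side at all; you simply compute $\sigma(y'\cdot x)$ and that is the top of the interval. Your attempt to ``apply $\pi_\downarrow$-moves on sashes'' via the cover relations is a category error: the cover relations in $\Sigma_{n-1}$ are cover relations in the \emph{quotient} lattice, i.e.\ between \emph{different} congruence classes, not moves within a class. Two sashes related by a cover are images of two distinct Pell permutations, so none of those moves can help you compute $\pi_\downarrow$ of anything.

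The computational error is in your evaluation of $\sigma(y'\cdot x)$. You checked the seam at $i=p$ but forgot to re-examine position $i=p-1$. In $y'\cdot x$, if $p-1$ is to the left of $p$ in $x$ (equivalently, $A$ ends in a black square), then at $i=p-1$ we have $p$ to the right of $p-1$ \emph{and} $p+1$ to the left of $p-1$ (since $p+1$ lives in the $y'$ block, which comes first). That triggers the rectangle exception, so $\sigma(y'\cdot x)$ has a rectangle in positions $p-1,p$, giving $A'\,\includesash{-1.5pt}{r}\,B$ directly. If $p-1$ is not before $p$ in $x$ (so $A$ does not end in a black square), no exception fires and you get $A\,\includesash{-1.5pt}{w}\,B$. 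That is the whole case split; no further bookkeeping is needed.
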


To clarify, here are some more specific cases of the product on sashes.
If $A = A' \includesash{-1.5pt}{b}$, then $\sum \left[ A \includesash{-1.5pt}{b} B, A' \includesash{-1.5pt}{r} B \right]$ equals:\\
\[\left\{
	\begin{array}{l l}
		A \includesash{-1.5pt}{b} B + A \includesash{-1.5pt}{r} B' + A \includesash{-1.5pt}{w} B + A' \includesash{-1.5pt}{r} B & \text{if } B = \includesash{-1.5pt}{w} B' \\ [.2in]
		A \includesash{-1.5pt}{b} B + A \includesash{-1.5pt}{w} B + A' \includesash{-1.5pt}{r} B & \text{if } B \neq \includesash{-1.5pt}{w} B'\\
	\end{array} 
\right.\]
and if $A \neq A' \includesash{-1.5pt}{b}$, then $\sum \left[ A \includesash{-1.5pt}{b} B, A \includesash{-1.5pt}{w} B \right]$ equals:\\
\[\left\{
	\begin{array}{l l}
		A \includesash{-1.5pt}{b} B + A \includesash{-1.5pt}{r} B' + A \includesash{-1.5pt}{w} B & \text{if } B = \includesash{-1.5pt}{w} B' \\ [.2in]
		A \includesash{-1.5pt}{b} B + A \includesash{-1.5pt}{w} B & \text{if } B \neq \includesash{-1.5pt}{w} B'\\
	\end{array} 
\right.\]

The case where $A=\nobox$ is an instance of $A \neq A' \includesash{-1.5pt}{b}$, and similarly for $B=\nobox$. 

\begin{proof}
We begin by computing the product of Pell permutations.
We showed in Section~\ref{intro} that the product of Pell permutations is the sum over the interval $[x \cdot y', \pi_\downarrow( y' \cdot x ) ]$ in the lattice of Pell permutations, where $x\in P_p$, $y \in P_q$, and $y' =  (y)_{[p+1,n]}$.  

To compute the product of sashes we can apply the map $\sigma$ to the product of Pell permutations.  
Let $\sigma(x)=A$ and $\sigma(y)=B$, thus $A \bullet_S B = \sigma(x \bullet_P y) = \sum [\sigma(x \cdot y'), \sigma( \pi_\downarrow( y' \cdot x ))] = \sum [\sigma(x \cdot y'), \sigma( y' \cdot x ) ]$.
The map $\sigma$ takes the first $p$ values of $x \cdot y'$ to $A$ and the last $q$ values to $B$.  Because $p+1$ is to the right of $p$ and since $p+2$ is not to the left of $p$, $\sigma(x \cdot y') = A \includesash{-1.5pt}{b} B$.
Similarly, $\sigma$ takes the first $p$ values of $y' \cdot x$ to $A$ and the last $q$ values to $B$.  Since $p+1$ is to the left of $p$, to compute $\sigma(y' \cdot x)$ we need to consider whether or not $p-1$ is before $p$ in $x$.

Suppose $p-1$ is before $p$ in $x$.  
Thus, $A$ ends with a black square so $\sigma(y' \cdot x)$ replaces the last black square of $A$ with a rectangle in positions $p-1$ and $p$.
That is $\sigma(y' \cdot x) = A' \includesash{-1.5pt}{r} B$, where $A=A'\includesash{-1.5pt}{b}$.

Suppose $p-1$ is not before $p$ in $x$.  
Thus, $A$ either ends with a white square, the right half of a rectangle, or $p-1$ does not exist. 
Thus, $\sigma(y' \cdot x)$ places a white square after $A$ and before $B$, so $\sigma(y' \cdot x) = A \includesash{-1.5pt}{w} B$.
\end{proof}

In informal terms, the product of two sashes is the sum of the sashes created by joining the two sashes with a black square and a white square, and if by so doing an adjacent black square to the left of a white square is created, then the product has additional terms with rectangles in the places of the adjacent black square and white square.

\begin{example}
Let $A = \includesash{-1.5pt}{rb}$ and let $B = \includesash{-1.5pt}{w}$.  
Notice that for $A' = \includesash{-1.5pt}{r}$ and $B' = \nobox$, both $A = A' \includesash{-1.5pt}{b}$ and $B = \includesash{-1.5pt}{w} B'$.
\[
\begin{array} {r@{\hspace{2pt}}c@{\hspace{2pt}}c@{\hspace{2pt}}c@{\hspace{2pt}}l@{\hspace{2pt}}c@{\hspace{2pt}}c@{\hspace{2pt}}c@{\hspace{2pt}}c} 
A \bullet_S B & = & \hspace{.29in}A \includesash{-1.5pt}{b} B &+&  \hspace{.31in}A \includesash{-1.5pt}{r} B' &+&  \hspace{.29in}A \includesash{-1.5pt}{w} B &+&  \hspace{.1in}A' \includesash{-1.5pt}{r} B \\ [3pt]
\includesash{-1.5pt}{rb} \bullet_S \includesash{-1.5pt}{w} & = & \includesash{-1.5pt}{rbbw} &+& \includesash{-1.5pt}{rbr} &+& \includesash{-1.5pt}{rbww} &+& \includesash{-1.5pt}{rrw} 
\end{array}
\]
\end{example}

\subsection{Dual Coproduct}

From Equation~\eqref{AVdualcoprod} and Equation~\eqref{MRdualcoprod}, it follows that: 
\begin{equation}
\label{dualsashcoprod}
m^*_S(C) = \sum_{i=0}^{n} \sigma\big(\eta(C)|_{[i]}\big) \otimes \sigma\big(\eta(C)|_{[i+1,n]}\big)
\end{equation}

\begin{proposition} The dual coproduct on a sash $C \in \Sigma_n$ is given by: 
\[ m_S^* (C) = \sum_{i=-1}^{n} C_i \otimes C^{n-i-1} \] 
Where $C_i \in \Sigma_i$ is a sash identical to the first $i$ positions of $C$ (unless C has \includesash{-1.5pt}{lhr} in position $i$, in which case $C_i$ ends with \includesash{-1.5pt}{b} ), and  $C^{n-i-1}  \in \Sigma_{n-i-1}$ is a sash identical to the last $n-i-1$ positions of $C$ (unless C has \includesash{-1.5pt}{rhr} in position $i+2$, in which case $C^{n-i-1}$ begins with \includesash{-1.5pt}{w} ), and we define $C_0 = C^0 = \nobox$ and $C_{-1} = C^{-1} = \emptyset$.
\end{proposition}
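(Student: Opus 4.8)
The plan is to expand Equation~\eqref{dualsashcoprod} and compare it term by term with the claimed formula. Since $C\in\Sigma_n$ we have $z:=\eta(C)\in P_{n+1}\subseteq S_{n+1}$, so Equation~\eqref{dualsashcoprod} reads
\[
m^*_S(C)=\sum_{j=0}^{n+1}\sigma\bigl(z|_{[j]}\bigr)\otimes\sigma\bigl(\st(z|_{[j+1,n+1]})\bigr),
\]
and the substitution $i=j-1$ puts the summation index into the range $i\in\{-1,0,\dots,n\}$ appearing in the statement. It therefore suffices to prove, for each such $i$, the two identities $\sigma\bigl(z|_{[i+1]}\bigr)=C_i$ and $\sigma\bigl(\st(z|_{[i+2,n+1]})\bigr)=C^{n-i-1}$. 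The degenerate short-sash values of these factors (lengths $-1$ and $0$) match the stated conventions $\sigma(\emptyset)=\emptyset$ and ``$\sigma$ of a one-element permutation is $\nobox$'' (using also $z|_{[n+1]}=z$ and $\st(z)=z$), so the real content lies in the generic range.

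The structural fact I would isolate first is that $\sigma$ acts \emph{locally}: the tile that $\sigma$ places in position $v$ of a permutation $x$ is determined by the relative order, inside $x$, of the values $v-1,v,v+1,v+2$ (any of these that exceed the length of $x$ being simply absent). Indeed, position $v$ is the right half of a rectangle exactly when processing the value $v-1$ created one there, which depends only on the positions of $v-1,v,v+1$; and when position $v$ is not pre-occupied, the tile in position $v$ is a black square, a white square, or the left half of a rectangle according to the positions of $v,v+1,v+2$. (A short check, using that a rectangle spanning positions $p$ and $p+1$ forces the value $p+1$ to the right of the value $p$, shows the pre-occupation does not cascade.)

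Granting locality, the first identity follows because deleting from $z$ every entry larger than $i+1$ leaves the relative order of any of the values $1,\dots,i+1$ unchanged; hence $\sigma\bigl(z|_{[i+1]}\bigr)$ and $C=\sigma(z)$ place the same tile in positions $1,\dots,i-1$, where only values $\le i+1$ are consulted. In position $i$ the sole difference is that the value $i+2$ is absent from $z|_{[i+1]}$, so the rectangle exception cannot fire: $\sigma\bigl(z|_{[i+1]}\bigr)$ has a black square in position $i$ precisely when $C$ has the left half of a rectangle in position $i$, and agrees with $C$ otherwise---exactly the description of $C_i$. For the second identity put $w=\st(z|_{[i+2,n+1]})$; standardization carries the value $k$ of $w$ to the value $i+1+k$ of $z$ and preserves the relative order of any values $\ge i+2$, so by locality $\sigma(w)$ agrees with $C$ in positions $2,\dots,n-i-1$, matching positions $i+3,\dots,n$ of $C$. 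Position $1$ of $\sigma(w)$ involves no value $0$, so its tile is never the right half of a rectangle; comparing with position $i+2$ of $C$, the two tiles coincide unless position $i+2$ of $C$ is the right half of a rectangle spanning positions $i+1$ and $i+2$, in which case the defining condition for that rectangle forces $i+3$ to the left of $i+2$ in $z$, so $\sigma(w)$ has a white square in position $1$---exactly the description of $C^{n-i-1}$.

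The main obstacle I anticipate is the bookkeeping at the two boundary positions---position $i$ of the left-hand factor and position $1$ of the right-hand factor---where one must pin down exactly when the rectangle exception in the definition of $\sigma$ does and does not fire, and then check that every short-sash degenerate case agrees with the stated conventions. Once the locality observation is established, everything else reduces to this routine verification.
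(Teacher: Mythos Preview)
Your approach is correct and genuinely different from the paper's. The paper does \emph{not} expand Equation~\eqref{dualsashcoprod} directly; instead it argues by duality with the already-established product formula (Proposition~\ref{product}). Concretely, the paper verifies that the claimed expression for $m_S^*(C)$ is the adjoint of $\bullet_S$ by checking, for every $A\in\Sigma_p$ and $B\in\Sigma_q$ with $p+q=n-1$, that $C$ appears in $A\bullet_S B$ if and only if $A\otimes B$ appears among the $C_i\otimes C^{n-i-1}$. Each direction is a short four-case analysis according to whether $C$ is $A\,\includesash{-1.5pt}{b}\,B$, $A\,\includesash{-1.5pt}{w}\,B$, $A'\,\includesash{-1.5pt}{r}\,B$, or $A\,\includesash{-1.5pt}{r}\,B'$.

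Your route is more self-contained: you isolate a locality principle for $\sigma$ (the tile in position $v$ depends only on the relative order of the values $v-1,v,v+1,v+2$) and use it to read off $\sigma(z|_{[i+1]})$ and $\sigma(\st(z|_{[i+2,n+1]}))$ position by position. This avoids invoking Proposition~\ref{product} altogether and makes the boundary behaviour (the ``left half of a rectangle becomes black'' and ``right half becomes white'' rules) fall out transparently from the absence of the value $i+2$ on the left and the value $i+1$ on the right. The paper's argument, by contrast, is shorter once the product is known and highlights the structural reason the formula looks the way it does, but it depends on that prior result. Your locality observation is worth stating as a standalone remark, since the paper uses the same phenomenon implicitly in several later proofs.
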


\begin{proof}
We need to show that $C$ is a term of $A \bullet_S B$ if and only if $A \otimes B$ is a term of $m_S^* (C)$.

Suppose that $C \in \Sigma_n$ is a term of $A \bullet_S B$, with $A \in \Sigma _p$, $B\in \Sigma_q$, and $p+q=n-1$.  
Thus $C$ is one of the following: $A \includesash{-1.5pt}{b} B$, $A \includesash{-1.5pt}{w} B$, $A' \includesash{-1.5pt}{r} B$, or $A \includesash{-1.5pt}{r} B'$, for $A'$ and $B'$ as in Proposition~\ref{product}.  
In any case, $m_S^* (C)$ has a term $A \otimes B$ because $C_p = A$ and $C^{n-p-1} = B$.

Now suppose that for $A\in \Sigma_p$ and $B\in \Sigma_q$, $A \otimes B$ is a term of $m_S^* (C)$, where $C \in \Sigma_n$ and $p+q=n-1$.  
Thus $C_p = A$ and $C^{n-p-1} = C^q = B$.  
If $C$ is $A \includesash{-1.5pt}{b} B$ or $A \includesash{-1.5pt}{w} B$, then $C$ is a term of $A \bullet_S B$.  
If $C$ is $A' \includesash{-1.5pt}{r} B$, then $A = A' \includesash{-1.5pt}{b} $, and $C$ is a term of $A \bullet_S B$.  
If $C$ is $A \includesash{-1.5pt}{r} B'$, then $B = \includesash{-1.5pt}{w} B' $, and $C$ is a term of $A \bullet_S B$.  
Therefore in all cases $C$ is a term of $A \bullet_S B$ and we have shown that the map $m_S^*$ is the dual coproduct on sashes.
\end{proof}

\subsection{Dual Product}

From Equation~\eqref{AVdualprod}, it follows that: 
\begin{equation}
\label{dualsashprod}
\Delta^*_S(A \otimes B) = \sum_{\substack{T\subseteq [n] \\ |T|=p}} \sigma\Big(\big(\eta(A))_T \cdot (\eta(B)\big)_{T^C}\Big)
\end{equation}

We now prepare to describe the dual product $\Delta^*_S$ directly on sashes.

\begin{definition} 
\label{gamma} 
Given a set $T \subseteq [n]$ such that $|T|=p$ and $n=p+q$, and given sashes $D \in \Sigma_{p-1}$ and $E \in \Sigma_{q-1}$, define a sash $\gamma_T(D \otimes E)$ by the following steps. 
First, write $D$ above $E$.  
Then, label $D$ with $T$, by placing the elements of $T$ in increasing order between each position of $D$, including the beginning and end.  
Label $E$ similarly using the elements of $T^C$.  

\begin{example}
\label{ex: dualprod}
Let $T=\{1, 2, 4, 7, 8, 9, 12, 13\}$, $D =\includesash{-1.5pt}{rbrwb}$, and $E = \includesash{-1.5pt}{wrbww}$
\vspace{.1in}
\begin{center}
\scalebox{0.6}{
\begin{picture}(130,70)
\put(0,50){\includegraphics{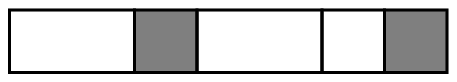}} \put(-1.5,40){\small 1} \put(15,40){\small 2} \put(33,40){\small 4} \put(52,40){\small 7} \put(70,40){\small 8} \put(88,40){\small 9} \put(104,40){\small 12} \put(122,40){\small 13}
\put(9,0){\includegraphics{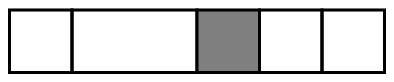}} \put(8,21){\small 3} \put(25,21){\small 5} \put(43,21){\small 6} \put(58,21){\small 10} \put(77,21){\small 11} \put(95,21){\small 14} \put(113,21){\small 15}
\end{picture}}
\end{center}
\end{example}

Next, draw arrows from $i$ to $i+1$ for all $i \in [n-1]$.  
Lastly, follow the path of the arrows placing elements in a new sash based on the following criteria:

Place a rectangle in the $i^{th}$ and $(i+1)^{st}$ positions of the new sash if either of the following conditions are met:
\begin{enumerate}
	\item if the $i^{th}$ arrow is from $D$ to $E$, the $(i+1)^{st}$ arrow is from $E$ to $D$, and there is a \includesash{-1.5pt}{w} or \includesash{-1.5pt}{rhr} in $D$ in between $i$ and $i+2$
	\item if the $i^{th}$ arrow is from $E$ to $E$, the $(i+1)^{st}$ arrow is from $E$ to $D$, and there is a \includesash{-1.5pt}{b} or \includesash{-1.5pt}{lhr} in $E$ in between $i$ and $i+1$
\end{enumerate}
	
If the above criteria are not met, then the following rules apply:
\begin{enumerate}
	\item if the $i^{th}$ arrow is from $D$ to $D$ (or from $E$ to $E$), place whatever is in between $i$ and $i+1$ in $D$ (or in $E$) in the $i^{th}$ position.  
	\item if the $i^{th}$ arrow is from $D$ to $E$, place a black square in the $i^{th}$ position.  
	\item if the $i^{th}$ arrow is from $E$ to $D$, place a white square in the $i^{th}$ position.
\end{enumerate}

Note that it may be necessary to replace the left half of a rectangle by a black square or to replace the right half of a rectangle by a white square (as in the first step of the example below).
\end{definition}

\begin{example} 
Let $T$, $D$, and $E$ be as in Example~\ref{ex: dualprod}.
Then we compute $\gamma_T(D \otimes E)$ to obtain:\\
$\gamma_{\{1, 2, 4, 7, 8, 9, 12, 13\}} ( \includesash{-1.5pt}{rbrwb} \otimes \includesash{-1.5pt}{wrbww} )=\includesash{-1.5pt}{brbrrbrbbw}$.

\begin{center}
\scalebox{0.7}{
\begin{picture}(200,100)

\put(0,80){\includegraphics{large_rbrwb}}
\put(-1.5,70){\small 1}
\put(15,70){\small 2}
\put(33,70){\small 4}
\put(52,70){\small 7}
\put(70,70){\small 8}
\put(88,70){\small 9}
\put(104,70){\small 12}
\put(122,70){\small 13}

\put(9,0){\includegraphics{large_wrbww}}
\put(8,21){\small 3}
\put(25,21){\small 5}
\put(43,21){\small 6}
\put(58,21){\small 10}
\put(77,21){\small 11}
\put(95,21){\small 14}
\put(113,21){\small 15}

\put(0,65){\vector(1,0){15}}
\put(15,62){\vector(-1,-4){7}}
\put(11,32){\vector(1,2){17}}
\put(35,62){\vector(-1,-4){7}}
\put(29,32){\vector(1,0){15}}
\put(46,32){\vector(1,4){8}}
\put(56,65){\vector(1,0){15}}
\put(74,65){\vector(1,0){15}}
\put(90,62){\vector(-1,-1){26}}
\put(64,32){\vector(1,0){15}}
\put(88,32){\vector(1,2){16}}
\put(110,65){\vector(1,0){15}}
\put(127,62){\vector(-1,-1){26}}
\put(101,32){\vector(1,0){15}}

\put(140,45){$\implies$}
\put(172, 42.5){\includegraphics{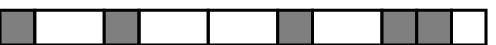}}
\end{picture}
}
\end{center}
\end{example}

\begin{theorem}
\label{Sash dualcoprod}
The dual product of sashes $D \in \Sigma_{p-1}$ and $E \in \Sigma_{q-1}$, for $p+q=n$, is given by:
\[\Delta^*_S(D \otimes E) = \sum_{\substack{T \subseteq [n], \\ |T|=p}} \gamma_T( D\otimes E)\]
\end{theorem}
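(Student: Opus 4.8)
The plan is to deduce the theorem from Equation~\eqref{dualsashprod}, which already gives
\[\Delta^*_S(D \otimes E) = \sum_{\substack{T \subseteq [n] \\ |T| = p}} \sigma\Big(\big(\eta(D)\big)_T \cdot \big(\eta(E)\big)_{T^C}\Big),\]
so it suffices to establish the single-subset identity $\sigma\big((\eta(D))_T \cdot (\eta(E))_{T^C}\big) = \gamma_T(D \otimes E)$ for every $T \subseteq [n]$ with $|T| = p$; summing over $T$ then finishes the proof. So I would fix such a $T = \{t_1 < \cdots < t_p\}$, write $T^C = \{t'_1 < \cdots < t'_q\}$, set $x = \eta(D) \in P_p$ and $y = \eta(E) \in P_q$, and study the permutation $w = (x)_T \cdot (y)_{T^C}$ of $[n]$. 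The basic structural fact to record is that the first $p$ entries of $w$ are the elements of $T$, with $t_k$ sitting where $x$ has the value $k$, and the last $q$ entries are the elements of $T^C$, with $t'_k$ sitting where $y$ has the value $k$; in particular, every element of $T$ precedes every element of $T^C$ in $w$.

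The core of the argument is a dictionary between the local data that the definition of $\sigma$ reads off $w$ and the local data that Definition~\ref{gamma} reads off $D$, $E$, and the labelling. Since $D = \sigma(x)$, the tile of $D$ in position $a$ (the tile lying between the labels $t_a$ and $t_{a+1}$) is a black square exactly when $a+1$ is right of $a$ in $x$ and $a+2$ is not to the left of $a$, is a white square exactly when $a+1$ is left of $a$ in $x$, and is a half of a rectangle occupying positions $a$ and $a+1$ of $D$ exactly when $a+1$ is right of $a$ in $x$ and $a+2$ is to the left of $a$; the analogous statement relates $E$ to $y$. Because $t_k$ sits in $w$ where $x$ has the value $k$ and $t'_k$ sits where $y$ has the value $k$, each of these conditions on $x$ or $y$ translates verbatim into a statement about the relative positions of the three consecutive labels $t_a, t_{a+1}, t_{a+2}$ (respectively $t'_b, t'_{b+1}, t'_{b+2}$) in $w$. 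Finally, the $i$-th arrow of $\gamma_T$ runs inside $D$, inside $E$, from $D$ to $E$, or from $E$ to $D$ exactly according to whether both of $i, i+1$ lie in $T$, both lie in $T^C$, $i$ lies in $T$ and $i+1$ in $T^C$, or $i$ lies in $T^C$ and $i+1$ in $T$; in the last two of these cases $i+1$ is automatically right of $i$, respectively left of $i$, in $w$, and the position of $i+2$ relative to $i$ in $w$ is forced as well, since $i+2 \in T$ puts it left of everything in $T^C$ while $i+2 \in T^C$ puts it right of everything in $T$.

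With this dictionary, I would compare, position by position, the tile placed by running the $\sigma$-procedure on $w$ with the tile placed by running the $\gamma_T$-procedure; both sweep the index $i$ from $1$ to $n-1$, so a step-by-step match suffices. I would organize the comparison by the type of the $i$-th arrow, with a subcase recording whether $i+2$ lies in $T$ or in $T^C$ (equivalently, by the type of the $(i+1)$-st arrow). When the $i$-th arrow stays inside $D$ or inside $E$, the labels $i$ and $i+1$ are adjacent in that sash and both procedures copy the tile between them, subject to the two provisos that correspond precisely to the ``Note'' and to the rectangle criteria of Definition~\ref{gamma}: a rectangle internal to $D$ or $E$ is reproduced intact exactly when its three surrounding labels stay a block of consecutive integers of $[n]$, so that its two halves land in adjacent positions $i$ and $i+1$; otherwise the half whose partner is cut off by a crossing of the sash boundary must be demoted to a black or white square. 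When the $i$-th arrow crosses from $D$ to $E$, then $i+1$ is right of $i$ in $w$, so $\sigma$ writes a black square unless $i+2$ is left of $i$, and the latter happens exactly when $i+2 \in T$ and the tile of $D$ between the labels $i$ and $i+2$ is a white square or the right half of a rectangle --- precisely rectangle-criterion (1). Symmetrically, when the $i$-th arrow crosses from $E$ to $D$, then $i+1$ is left of $i$, so $\sigma$ writes a white square, matching rule (3); and the rectangle produced by criterion (2) is exactly the one $\sigma$ creates at value $i$ when $i$ and $i+1$ lie in $T^C$, the tile of $E$ between the labels $i$ and $i+1$ is a black square or the left half of a rectangle (so $i+1$ is right of $i$ in $w$), and $i+2 \in T$ (so $i+2$ is left of $i$ in $w$). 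The extreme indices $i = 0$ and $i = n-1$, and the degenerate cases $p = 0$ or $q = 0$, are covered by the $\emptyset$ and $\nobox$ conventions. In every case the tiles agree, which gives $\sigma\big((\eta(D))_T \cdot (\eta(E))_{T^C}\big) = \gamma_T(D \otimes E)$.

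The step I expect to demand the most care is the rectangle bookkeeping inside that case analysis: checking that the left-to-right $\gamma_T$-construction never strands a half-rectangle --- that whenever one half of a rectangle is written in position $i$, its partner is written in position $i+1$, and conversely that a rectangle internal to $D$ or $E$ is split only when the ``Note'' requires it --- and that this matches, step for step, the way the single rectangle exception in the definition of $\sigma$ acts on $w$. Once this is pinned down, the single-subset identity holds, and summing it over all $T$ with $|T| = p$ and quoting Equation~\eqref{dualsashprod} yields Theorem~\ref{Sash dualcoprod}.
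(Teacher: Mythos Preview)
Your proposal is correct and follows essentially the same approach as the paper: both reduce to the single-subset identity $\sigma\big((\eta(D))_T\cdot(\eta(E))_{T^C}\big)=\gamma_T(D\otimes E)$ via Equation~\eqref{dualsashprod} and then verify it by a position-by-position case analysis. The only difference is organizational: the paper cases on the tile that $\gamma_T$ places in position $i$ (black, white, left half, right half) and then lists the subcases on the memberships of $i,i+1,i+2$ in $T$ and the local tile of $D$ or $E$, whereas you case on the type of the $i$-th arrow and the location of $i+2$; these are two transversal slicings of the same finite check and neither buys anything the other does not.
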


\begin{proof}
For $D \in \Sigma_{p-1}$ and $E \in \Sigma_{q-1}$ such that $\eta(D)=x \in P_p$ and $\eta(E)=y \in P_q$, where $p+q=n$, we consider Equation~\eqref{dualsashprod} to define the dual product of sashes.

Let $T\subseteq [n]$ such that $|T|=p$.  
It is left to show that $\gamma_T( D\otimes E) = \sigma((x)_T \cdot (y)_{T^C})$.

\textbf{Case 1:} $\gamma_T( D\otimes E)$ has a black square in the $i^{th}$ position.\\
\textbf{Subcase a:} $i, i+1 \in T$ and $D$ has a black square in the position labeled with $i$.\\
\textbf{Subcase b:} $i \in T$, $i+1 \in T^C$, and if $i+2 \in T$ then $D$ does not have a white square or the right half of a rectangle in the position labeled with an $i$.\\
\textbf{Subcase c:} $i, i+1, i+2 \in T^C$ and $E$ has a black square in the position labeled with $i$.\\
For each subcase, the entry $i+1$ in $(x)_T \cdot (y)_{T^C}$ is to the right of $i$ and $i+2$ is not to the left of $i$, so the sash $\sigma((x)_T \cdot (y)_{T^C})$ has a black square in the $i^{th}$ position.\\

\textbf{Case 2:} $\gamma_T( D\otimes E)$ has a white square in the $i^{th}$ position.\\
\textbf{Subcase a:} $i, i+1 \in T$ and $D$ has a white square in the position labeled with $i$.\\
\textbf{Subcase b:} $i-1, i+1 \in T$, $i \in T^C$, and $D$ does not have a white square or the right half of a rectangle in the position labeled with $i-1$.\\
\textbf{Subcase c:} $i+1 \in T$, $i-1, i \in T^C$, and $E$ does not have a black square or the left half of a rectangle in the position labeled with $i-1$.\\
\textbf{Subcase d:} $i, i+1\in T^C$ and $E$ has a white square in the position labeled with $i$.\\
For each subcase, the entry $i+1$ in $(x)_T \cdot (y)_{T^C}$ is to the left of $i$ and $i-1$ is not positioned between $i+1$ and $i$, so the sash $\sigma((x)_T \cdot (y)_{T^C})$ has a white square in the $i^{th}$ position.\\

\textbf{Case 3:} $\gamma_T( D\otimes E)$ has the left half of a rectangle in the $i^{th}$ position.\\
\textbf{Subcase a:} $i, i+1, i+2 \in T$ and $D$ has a rectangle in the positions labeled with $i$ and $i+1$.\\
\textbf{Subcase b:} $i, i+2 \in T$, $i+1 \in T^C$, and $D$ has a white square or the right half of a rectangle in the position labeled with an $i$.\\
\textbf{Subcase c:} $i+2 \in T$, $i, i+1 \in T^C$, and $E$ has a black square or the left half of a rectangle in the position labeled with an $i$.\\
\textbf{Subcase d:} $i, i+1, i+2 \in T^C$ and $E$ has a rectangle in the positions labeled with $i$ and $i+1$.\\
For each subcase, the entry $i+1$ in $(x)_T \cdot (y)_{T^C}$ is to the right of $i$ and $i+2$ is to the left of $i$, so the sash $\sigma((x)_T \cdot (y)_{T^C})$ has the left half of a rectangle in the $i^{th}$ position.\\

\textbf{Case 4:} $\gamma_T( D\otimes E)$ has the right half of a rectangle in the $i^{th}$ position.\\
\textbf{Subcase a:} $i, i+1 \in T$ and $D$ has the right half of a rectangle in the position labeled with $i$.\\
\textbf{Subcase b:} $i-1, i+1 \in T$, $i \in T^C$, and $D$ has a white square or the right half of a rectangle in the position labeled with $i-1$.\\
\textbf{Subcase c:} $i+1 \in T$, $i-1, i \in T^C$, and $E$ has a black square or the left half of a rectangle in the position labeled with $i-1$.\\
\textbf{Subcase d:} $i, i+1\in T^C$ and $E$ has the right half of a rectangle in the position labeled with $i$.\\
For each subcase, the entry $i+1$ in $(x)_T \cdot (y)_{T^C}$ is to the left of $i$ and $i-1$ is positioned between $i+1$ and $i$, so the sash $\sigma((x)_T \cdot (y)_{T^C})$ has the right half of a rectangle in the $i^{th}$ position.\\

Therefore we have shown that $\sigma((x)_T \cdot (y)_{T^C})$ and $\gamma_T( D\otimes E)$ have the same object in every position.
\end{proof}

\subsection{Coproduct}

We now describe the coproduct in the Hopf algebra of sashes and we begin with some definitions.  

\begin{definition}
For $C \in \Sigma_{n-1}$, a \emph{dotting of $C$} is $C$ with a dot in any subset of the $n-1$ positions of $C$.
An \emph{allowable dotting of $C$} is a dotting of $C$ that meets all of the following conditions

\begin{enumerate}
\item has at least one dot
\item the first dot can be in any position, and dotted positions alternate between a black square (or the left half of a rectangle) and a white square (or the right half of a rectangle)
\item has no instances of \includesash{-1.5pt}{dr}  or \includesash{-1.5pt}{bwd} 
\end{enumerate}
\end{definition}

Figure~\ref{allowable dottings} shows the allowable dottings of the sash \includesash{-1.5pt}{rbw}.

\begin{figure}
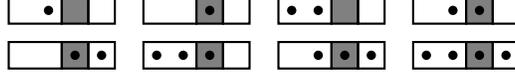

\begin{center}
\begin{tabular}{cccc}
\includesash{-1.5pt}{rdbw} & \includesash{-1.5pt}{rbdw} & \includesash{-1.5pt}{rddbw} & \includesash{-1.5pt}{rdbdw} \\[4pt]
\includesash{-1.5pt}{rbdwd} & \includesash{-1.5pt}{rddbdw} & \includesash{-1.5pt}{rdbdwd} & \includesash{-1.5pt}{rddbdwd} 
\end{tabular}
\end{center}
\caption{The allowable dottings of a sash}
\label{allowable dottings}
\end{figure}

Consider an allowable dotting $d = c_1 \bullet_1 c_2 \bullet_2 \cdots c_j \bullet_j c_{j+1}$ of a sash $C$, where each $c_i$ is a sub sash of $C$ without any dots, and $\bullet_i$ is a single dotted position.  
If any $\bullet_i$ is on the right half of a rectangle, then the the left half of the rectangle in the last position of $c_i$ is replaced by a black square.  
If $\bullet_i$ and $\bullet_{i+1}$ are in adjacent positions, then $c_{i+1}=\nobox$.  
(If any $\bullet_i$ is on the left half of a rectangle, then $\bullet_{i+1}$ is on the right half of the same rectangle, so $c_{i+1}=\nobox$.)  

We use $C$ and $d$ to define two objects $A$ and $B$ that are similar to sashes, but have an additional type of square \includesash{-1.5pt}{q}, which we call a mystery square.
If $\bullet_1$ is on a black square or the left half of a rectangle, then let $A$ be the concatenation of the \emph{odd} $c_i$ with a mystery square in between each $c_i$ (where $i$ is odd), and let $B$ be the concatenation of the \emph{even} $c_i$ with a mystery square in between each $c_i$ (where $i$ is even).  
For example, if $\bullet_1$ is on a black square and $j$ is even, then $A = c_1$ \includesash{-1.5pt}{q} $c_3$ \includesash{-1.5pt}{q} $\cdots$ \includesash{-1.5pt}{q} $c_{j+1}$ and $B = c_2$ \includesash{-1.5pt}{q} $c_4$ \includesash{-1.5pt}{q} $\cdots$ \includesash{-1.5pt}{q} $c_j$.
If $\bullet_1$ is on a white square or the right half of a rectangle, then let $A$ be the concatenation of the \emph{even} $c_i$ with a mystery square in between each $c_i$, and let $B$ be the concatenation of the \emph{odd} $c_i$ with a mystery square in between each $c_i$.  

We use the objects $A$ and $B$ to define four sashes $\underline{A}$, $\overline{A}$, $\underline{B}$, and $\overline{B}$.

To compute $\underline{A}$, consider each mystery square in $A$.  
If the mystery square follows $c_i$ and the $i^{th}$ and $(i+1)^{st}$ dots of $d$ are on the same rectangle, \includesash{-1.5pt}{rdd}, then replace the mystery square after $c_i$ with a white square.  
Otherwise replace the mystery square with a black square.

To compute $\overline{A}$, consider each mystery square in $A$ from left to right.  
If the mystery square follows $c_i$ and the $i^{th}$ and $(i+1)^{st}$ dots of $d$ are on an adjacent black square and white square, \includesash{-1.5pt}{bdwd}, then we check to see whether or not the mystery square is followed by a white square.
If the mystery square is followed by a white square (\emph{i.e.} if $c_{i+2}$ starts with a white square), then replace the mystery square and the white square with a rectangle.
Otherwise replace the mystery square with a black square.
If the mystery square follows $c_i$ and the $i^{th}$ and $(i+1)^{st}$ dots of $d$ are \emph{not} on an adjacent black square and white square, then we check to see whether or not the mystery square is preceded by a black square.
If either $c_i$ ends in a black square or $c_i = \nobox$ and the previous mystery square has been changed to a black square, then replace the mystery square after $c_i$ and the black square before it with a rectangle.
Otherwise replace the mystery square after $c_i$ with a white square.

To compute $\underline{B}$, replace all mystery squares of $B$ with black squares.

To compute $\overline{B}$, replace all mystery squares of $B$ with white squares, unless the mystery square is preceded by a black square, in which case replace both the black square and the mystery square with a rectangle.

\begin{example} If $d =  \includesash{-1.5pt}{bbwwwrbwwrbbrbw} $, then $c_1 = \includesash{-1.5pt}{b}$, $c_2 = c_3 = \includesash{-1.5pt}{w} $, $c_4 = c_5 = c_6 = \nobox$, $c_7 = \includesash{-1.5pt}{wrb}$, $c_8 = \includesash{-1.5pt}{b}$, $c_9 = c_{10} = c_{11} = \nobox$, and $\bullet_1$ is on a black square.  
Thus, $A= c_1$ \includesash{-1.5pt}{q} $c_3$ \includesash{-1.5pt}{q} $c_5$ \includesash{-1.5pt}{q} $c_7$ \includesash{-1.5pt}{q} $c_9$ \includesash{-1.5pt}{q} $c_{11}$ and $B = c_2$ \includesash{-1.5pt}{q} $c_4$  \includesash{-1.5pt}{q} $c_6$  \includesash{-1.5pt}{q} $c_8$  \includesash{-1.5pt}{q} $c_{10}$.  
Using the rules above to compute $A$, $B$, and the four sashes $\underline{A}$, $\overline{A}$, $\underline{B}$, and $\overline{B}$, we have:
\begin{center}
\begin{tabular}{l@{\hskip .3in}l}
$A = \includesash{-1.5pt}{bqwqqwrbqq}$ & $B = \includesash{-1.5pt}{wqqqbq}$\\[.07in]
$\underline{A} = $ \includesash{-1.5pt}{bbwwbwrbbb} & $\underline{B} = $ \includesash{-1.5pt}{wbbbbb}\\[.07in]
$\overline{A} = $ \includesash{-1.5pt}{rwwrrrb} & $\overline{B} = $ \includesash{-1.5pt}{wwwwr}
\end{tabular}
\end{center}
\end{example}

Given an allowable dotting $d$ of a sash $C$ we define $I_d = \sum \left[\underline{A},\overline{A}\right]$ and $J_d=\sum \left[\underline{B},\overline{B}\right]$ for $\underline{A}$, $\overline{A}$, $\underline{B}$, and $\overline{B}$ computed as above.
Thus the notation $I_d \otimes J_d$ denotes $\sum_{\substack{D \in [\underline{A},\overline{A}] \\ E \in [\underline{B},\overline{B}]}} D \otimes E $.

\begin{theorem}
\label{thm:coproduct}
Given $C \in \Sigma_{n-1}$:
\[ \Delta_S(C)= \emptyset \otimes C + C\otimes \emptyset +\sum_{\substack{\text{allowable} \\ \text{dottings} \\ d \text{ of } C}} I_d \otimes J_d \]
\end{theorem}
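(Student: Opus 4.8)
The plan is to derive Theorem~\ref{thm:coproduct} from Theorem~\ref{coprod int} by transporting everything through the order isomorphism $\sigma$ and then matching its index set of ``good'' subsets with the allowable dottings of $C$. Put $z=\eta(C)\in P_n$. Since $\Delta_S(C)=(\sigma\otimes\sigma)(\Delta_{\Av}(z))$, Theorem~\ref{coprod int} gives $\Delta_S(C)=\sum_{T\text{ good}}\sigma(I_T)\otimes\sigma(J_T)$; and because $\sigma$ restricts to an order isomorphism from $P_p$ to $\Sigma_{p-1}$ and (Proposition~\ref{sigma_equiv}) identifies equivalent permutations, each $\sigma(I_T)$ is the sum over the interval $[\sigma\st(z_{\min}|_T),\,\sigma\pi_\downarrow\st(z_{\max}|_T)]$ in $\Sigma_{p-1}$, and likewise $\sigma(J_T)$ is a sum over an interval in $\Sigma_{q-1}$. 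The good subset $T=\emptyset$ has $z_{\min}=z$ and $z_{\max}=\pi^\uparrow(z)$, so $I_\emptyset=\emptyset$, $J_\emptyset=z$, and its contribution is $\emptyset\otimes C$; symmetrically $T=[n]$ contributes $C\otimes\emptyset$. It therefore suffices to exhibit a bijection between the good subsets $T$ with $1\le|T|\le n-1$ and the allowable dottings $d$ of $C$ under which $\sigma(I_T)\otimes\sigma(J_T)=I_d\otimes J_d$.

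For the bijection, observe that since $T$ is good, membership of a value $v\in[n]$ in $T$ or in $T^C$ is well defined and is the same in every representative $z'\sim z$ whose first $|T|$ entries form $T$ (in each such $z'$ all of $T$ precedes all of $T^C$). Assign to $T$ the dotting $d(T)$ whose dots are exactly the positions $i$ with $|\{i,i+1\}\cap T|=1$. By the definition of $\sigma$, $i+1$ lying to the right of $i$ (forced when $i\in T$ and $i+1\in T^C$) makes position $i$ a black square or the left half of a rectangle, while $i+1$ lying to the left of $i$ (forced when $i\in T^C$ and $i+1\in T$) makes it a white square or the right half of a rectangle; since the side changes in opposite directions at consecutive crossings, the dots alternate between the two types, and there is at least one dot because $T$ and $T^C$ are nonempty. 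The forbidden configurations cannot occur: a dot on the left half of a rectangle makes the next boundary a crossing as well, so the right half carries a dot too (excluding \includesash{-1.5pt}{dr}); and a dot on a white square immediately after an undotted black square puts $i$ in $T^C$, whereas that black square forces $i+2$ to lie to the right of $i$, contradicting $i+2\in T$ (excluding \includesash{-1.5pt}{bwd}). Conversely an allowable dotting $d$ reconstructs a partition---the colour of the first dot fixes the side of value $1$, and each subsequent dot flips the side---and because $d$ avoids the two forbidden configurations the inversions that sort the reconstructed $T$ before $T^C$ may be performed inside the $\Theta$-class of $z$, exhibiting a representative and showing the partition is good. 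Thus $T\mapsto d(T)$ is the desired bijection.

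It then remains to identify the four endpoint sashes. Cutting $C$ at the dots of $d=d(T)$ as $C=c_1\bullet_1 c_2\cdots\bullet_j c_{j+1}$, each block $c_i$ records consecutive-value relationships lying inside a single part of the partition, and restricting a representative to that part and standardizing reproduces $c_i$ verbatim except that a left half of a rectangle is demoted to a black square when a dot falls on its right half; grouping the odd- and even-indexed blocks (inserting a mystery square for each skipped crossing) and assigning them to the two tensor factors according to the colour of $\bullet_1$---equivalently, according to the side of value $1$---yields precisely the objects $A$ and $B$ of the theorem. By the proof of Theorem~\ref{coprod int} (transported by $\sigma$), $\sigma\st(z_{\min}|_T)$ is the minimal sash $D$ for which $\gamma_T(D\otimes E)=C$ has a solution $E$; running the construction of $\gamma_T$ shows the minimal legal content of each mystery square is a black square, forced to a white square only when two consecutive dots share a rectangle of $C$---which is exactly the rule defining $\underline A$---and similarly for $\underline B$. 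Passing from $z_{\min}$ to $z_{\max}$ replaces these linkers by their $\pi^\uparrow$-images, and since a $\pi^\uparrow$-move fuses an adjacent black--white pair into a rectangle, carrying out all available such moves (through the restriction/standardization bookkeeping) reproduces $\overline A$ and $\overline B$. Hence $\sigma(I_T)=I_d$ and $\sigma(J_T)=J_d$, and the sum over good $T$ becomes the sum over allowable dottings, proving the theorem.

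I expect the main obstacle to be exactly this last step: carefully matching $z_{\min}$, $z_{\max}$ and the operations $\st$, $|_{T}$, $|_{T^C}$ against the intricate definitions of $\underline A,\overline A,\underline B,\overline B$---keeping track of which blocks $c_i$ are empty, of the demotions of rectangle halves to plain squares, and of the two special cases of consecutive dots on a single rectangle versus on an adjacent black--white pair. One can carry this out either via the explicit formula for $\gamma_T$ from the proof of Theorem~\ref{Sash dualcoprod} or via a direct description of $z_{\min}$ and $z_{\max}$ on permutations; in either route the bookkeeping is the crux, while the bijection and the reduction in the first two paragraphs are essentially formal.
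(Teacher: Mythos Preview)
Your overall reduction is exactly the paper's: start from Theorem~\ref{coprod int}, transport by $\sigma$, peel off the trivial terms $T=\emptyset$ and $T=[n]$, and set up a bijection between the remaining good subsets and the allowable dottings of $C$. Your description of that bijection (second paragraph) matches the paper's notion of \emph{allowable set} and the map $\tau$ (Proposition~\ref{tau}) essentially verbatim.

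Where you diverge is in the last step, identifying $\sigma(I_T)\otimes\sigma(J_T)$ with $I_d\otimes J_d$. You propose to pin down the four endpoint sashes by computing $z_{\min}$, $z_{\max}$ and tracking restrictions/standardizations, with the hand-wave that ``a $\pi^\uparrow$-move fuses an adjacent black--white pair into a rectangle.'' That description is not quite right (the cover relations on sashes also include $\includesash{-1.5pt}{b}\to\includesash{-1.5pt}{w}$ and $\includesash{-1.5pt}{r}\to\includesash{-1.5pt}{ww}$), and the definition of $\overline A$ is genuinely delicate: which rule fires depends on whether the two dots bracketing a mystery square sit on $\includesash{-1.5pt}{bdwd}$ versus $\includesash{-1.5pt}{rdd}$, and on what tile lies immediately before or after. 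Matching this directly against $\pi_\downarrow\st(z_{\max}|_T)$ is doable but is exactly the ``intricate bookkeeping'' you flag, and your sketch does not yet do it.

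The paper sidesteps this endpoint computation entirely. Instead of identifying the endpoints of the two intervals, it characterizes the intervals \emph{intrinsically}: it introduces the auxiliary templates $\hat A$, $\hat B$ (with the extra tile types $\includesash{-1.5pt}{bplus}$, $\includesash{-1.5pt}{wplus}$, $\includesash{-1.5pt}{q}$), proves via four short lemmas and Propositions~\ref{Aint}--\ref{Bint} that $[\underline A,\overline A]$ and $[\underline B,\overline B]$ are exactly the sashes ``of the form'' $\hat A$ and $\hat B$, and then shows by a case analysis on $\gamma_T$ (Proposition~\ref{prop:dualprodsashes}) that $\gamma_T(D\otimes E)=C$ if and only if $D$ is of the form $\hat A$ and $E$ is of the form $\hat B$. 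Combined with Proposition~\ref{coprod dualprod}, this gives $\sigma(I_T)\otimes\sigma(J_T)=I_d\otimes J_d$ without ever computing $z_{\min}$ or $z_{\max}$. This intermediate ``of the form $\hat A$'' device is precisely the missing idea that turns your acknowledged crux into a routine (if lengthy) case check.
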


To prove Theorem~\ref{thm:coproduct}, we need to introduce some more terminology.
Given an allowable dotting $d$ of a sash $C$ and the objects $A$ and $B$ defined above, we define two more objects $\hat{A}$ and $\hat{B}$.  
These objects are similar to sashes, but have three additional types of squares: $\includesash{-1.5pt}{bplus}$, $\includesash{-1.5pt}{wplus}$, and $\includesash{-1.5pt}{q}$.
We call these squares: black-plus square, white-plus square, and mystery square respectively.
If $\bullet_i$ and $\bullet_{i+1}$ are on an adjacent black square and white square, \emph{i.e.} $\includesash{-1.5pt}{bdwd}$, then replace the $\includesash{-1.5pt}{q}$ after $c_i$ on $A$ with a $\includesash{-1.5pt}{bplus}$ on $\hat{A}$.
If $\bullet_i$ and $\bullet_{i+1}$ are on a rectangle, \emph{i.e.} $\includesash{-1.5pt}{rdd}$, then replace the $\includesash{-1.5pt}{q}$ after $c_i$ on $A$ with a $\includesash{-1.5pt}{wplus}$ on $\hat{A}$.
The resulting objects are $\hat{A}$ and $\hat{B}$.
Note $B = \hat{B}$.

We say that a sash $D$ is of the form $\hat{A}$ if $D$ is identical to $\hat{A}$ except for the following allowable substitutions:
\begin{itemize}
\item A black-plus square on $\hat{A}$ is replaced by a black square on $D$.
\item A white-plus square on $\hat{A}$ is replaced by a white square on $D$.
\item A mystery square on $\hat{A}$ is replaced by a either a black square or a white square on $D$.
\item A black-plus square or a mystery square on $\hat{A}$, and a white square, a white-plus square, or a mystery square following it, are replaced by a rectangle on $D$.
\item A white-plus square or a mystery square on $\hat{A}$, and a black square, a black-plus square, or a mystery square preceding it, are replaced by a rectangle on $D$.
\end{itemize}

Similarly, a sash $E$ is of the form $\hat{B}$ if it follows the same rules as above.  Since $\hat{B}$ does not have any black-plus squares or white-plus squares, $E$ is of the form $\hat{B}$ if $E$ is identical to $\hat{B}$ except for the following allowable substitutions:
\begin{itemize}
\item A mystery square on $\hat{B}$ is replaced by a either a black square or a white square on $E$.
\item A mystery square on $\hat{B}$, and a white square or a mystery square following it, are replaced by a rectangle on $E$.
\item A mystery square on $\hat{B}$, and a black square or a mystery square preceding it, are replaced by a rectangle on $E$.
\end{itemize}

\begin{lemma}
The sash $\underline{A}$ is minimal with respect to sashes of the form $\hat{A}$.
\end{lemma}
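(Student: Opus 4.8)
The plan is to prove two things: that $\underline{A}$ is itself a sash of the form $\hat{A}$, and that $\underline{A}\le D$ in the sash lattice for every sash $D$ of the form $\hat{A}$. Together these say that $\underline{A}$ is the least element of the set of sashes of the form $\hat{A}$, which is stronger than being minimal.

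The first point is immediate from the definitions. A mystery square of $A$ whose bracketing dots $\bullet_i,\bullet_{i+1}$ lie on a common rectangle is precisely a white-plus square of $\hat{A}$, and one whose bracketing dots lie on an adjacent black square and white square is precisely a black-plus square of $\hat{A}$; the recipe for $\underline{A}$ turns the former into a white square and every other mystery square (the black-plus positions and the genuine mystery positions) into a black square. Each of these is one of the listed single-tile substitutions (white-plus $\to$ white square, black-plus $\to$ black square, mystery $\to$ black square), no rectangle-merging substitution is used, and replacing a unit cell by a unit cell preserves the length and the validity of the tiling. Hence $\underline{A}$ is a legitimate sash and is of the form $\hat{A}$.

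For the second point, fix a sash $D$ of the form $\hat{A}$ and compare $\underline{A}$ and $D$ cell by cell. They agree on every cell coming from one of the fixed sub-sashes $c_i$, with the only possible exception of a boundary cell of some $c_i$ that $D$ has absorbed into a rectangle together with an adjacent plus-or-mystery square; so it suffices to control the comparison near each plus/mystery position. Here I would use the cover relations on sashes, which give the chain $\includesash{-1.5pt}{bw}\prec\includesash{-1.5pt}{r}\prec\includesash{-1.5pt}{ww}$ and the move $\includesash{-1.5pt}{b}\prec\includesash{-1.5pt}{w}$ when the following tile is not white, together with the elementary consequences that a single black square lies weakly below any other single tile in that cell and that $\includesash{-1.5pt}{b}\,\includesash{-1.5pt}{b}\le\includesash{-1.5pt}{r}$. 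The case analysis is then routine: at a mystery or black-plus position $\underline{A}$ has a black square, while $D$ has a black square, a white square, or—merged with the cell on one side—a rectangle, and in each subcase the local fragment of $\underline{A}$ sits weakly below that of $D$; at a white-plus position $\underline{A}$ has a white square, and $D$ has either a white square in that cell or a rectangle formed with the preceding cell, and in the latter case the preceding cell of $\underline{A}$ is forced to be a black square (black square, black-plus, and mystery all become black in $\underline{A}$), so $\includesash{-1.5pt}{bw}\le\includesash{-1.5pt}{r}$ settles it.

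Finally I would assemble these local comparisons into $\underline{A}\le D$. One clean route is to process the plus/mystery positions left to right, applying to $\underline{A}$ the dictated up-cover moves to reach $D$ while checking that a move made for one position never destroys a configuration needed for a later one; equivalently one may verify $\Inv(\eta(\underline{A}))\subseteq\Inv(\eta(D))$ directly, since $\eta$ is an order isomorphism from $\Sigma_{p-1}$ onto the weak order on Pell permutations. I expect this assembly step, rather than any single local comparison, to be the main obstacle: the set of sashes of the form $\hat{A}$ is not literally a product of independent per-cell choices—a rectangle substitution links two adjacent cells, and two plus/mystery squares can be adjacent when some $c_i=\nobox$—so one must argue carefully that the cellwise inequalities genuinely combine to the lattice inequality.
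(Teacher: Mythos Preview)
Your first step---checking that $\underline{A}$ is of the form $\hat{A}$---matches the paper's opening paragraph exactly.

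The second step diverges. You aim to prove the stronger claim that $\underline{A}$ is the \emph{least} element among sashes of the form $\hat{A}$, via a cell-by-cell comparison assembled into a global inequality. The paper proves only literal minimality: it takes an arbitrary sash $A'$ covered by $\underline{A}$ (three cases, one per cover-relation type) and checks in each case that $A'$ cannot be of the form $\hat{A}$. That is all the lemma asserts, and the argument is a short, self-contained case check with no assembly step.

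The stronger least-element statement you are after is true, but in the paper it is obtained only later, as a consequence of this minimality lemma together with Proposition~\ref{Aint}: there one shows that any $D$ of the form $\hat{A}$ with $D\neq\underline{A}$ is covered from below by some $D'$ still of the form $\hat{A}$, and then descends. Your left-to-right processing of plus/mystery positions is essentially that descent run in reverse, so you are folding the content of half of Proposition~\ref{Aint} into this lemma. That is legitimate, but the assembly you flag as the main obstacle is real: the sash order is not a product of per-cell orders, so local inequalities like ``a black square lies weakly below any other tile in that cell'' have no direct meaning, and the rectangle substitutions genuinely couple adjacent cells. The paper's route sidesteps this entirely by proving only what the lemma states and deferring the interval claim to Proposition~\ref{Aint}.
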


\begin{proof}
The sash $\underline{A}$ is of the form $\hat{A}$, because every white-plus square on $\hat{A}$ is replaced by a white square on $\underline{A}$ and every black-plus square and mystery square on $\hat{A}$ is replaced by a black square on $\underline{A}$.

Suppose the sash $A'$ is obtained from $\underline{A}$ by going down by a cover relation.
We want to show that $A'$ is not of the form $\hat{A}$.

\textbf{Case 1:} $A' = A_1 \includesash{-1.5pt}{b} A_2$ and $\underline{A} = A_1 \includesash{-1.5pt}{w} A_2$, where the leftmost tile of $A_2$ is not a white square.\\
Let $|A_1| = i - 1$, so $A'$ has a black square in the $i^{th}$ position and $\underline{A}$ has a white square in the $i^{th}$ position.
Thus, $\hat{A}$ either has a white square or a white-plus square in the $i^{th}$ position.
Either way, $A'$ is not of the form $\hat{A}$.

\textbf{Case 2:} $A' = A_1 \includesash{-1.5pt}{bw} A_2$ and $\underline{A} = A_1 \includesash{-1.5pt}{r} A_2$.\\
Let $|A_1| = i - 1$, so $A'$ has a black square and white square in the $i^{th}$ and $(i+1)^{st}$ positions and $\underline{A}$ has a rectangle in the $i^{th}$ and $(i+1)^{st}$ positions.
Thus, $\hat{A}$ has a rectangle in the $i^{th}$ and $(i+1)^{st}$ positions, and $A'$ is not of the form $\hat{A}$.

\textbf{Case 3:} $A' = A_1 \includesash{-1.5pt}{r} A_2$ and $\underline{A} = A_1 \includesash{-1.5pt}{ww} A_2$.\\
Let $|A_1| = i - 1$, so $A'$ has a rectangle in the $i^{th}$ and $(i+1)^{st}$ positions and $\underline{A}$ has a white square in both the $i^{th}$ and $(i+1)^{st}$ positions.
There are four possibilities of what occupies the $i^{th}$ and $(i+1)^{st}$ positions of $\hat{A}$: $\includesash{-1.5pt}{ww}$, $\includesash{-1.5pt}{wpw}$, $\includesash{-1.5pt}{wwp}$, or $\includesash{-1.5pt}{wpwp}$.
In any case, $A'$ is not of the form $\hat{A}$.
\end{proof}

\begin{lemma}
The sash $\overline{A}$ is maximal with respect to sashes of the form $\hat{A}$.
\end{lemma}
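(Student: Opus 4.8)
The plan is to mirror the proof of the preceding lemma: I would show (i) that $\overline{A}$ is itself a sash of the form $\hat{A}$, and (ii) that whenever $\overline{A} \precdot A'$ is a cover relation in $\Sigma$, the sash $A'$ is not of the form $\hat{A}$; together these say that $\overline{A}$ is maximal among sashes of the form $\hat{A}$. By the description of the cover relations on sashes, (ii) breaks into the three cases $\overline{A} = A_1 \includesash{-1.5pt}{b} A_2 \precdot A_1 \includesash{-1.5pt}{w} A_2$, $\overline{A} = A_1 \includesash{-1.5pt}{bw} A_2 \precdot A_1 \includesash{-1.5pt}{r} A_2$, and $\overline{A} = A_1 \includesash{-1.5pt}{r} A_2 \precdot A_1 \includesash{-1.5pt}{ww} A_2$; in each I would set $|A_1| = i-1$ and inspect positions $i$ and $i+1$.

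For (i), I would simply check the recipe for $\overline{A}$ against the list of allowable substitutions: a genuine tile of $\hat{A}$ is left alone or becomes a half of a newly formed rectangle; a white-plus square becomes a white square, or the right half of a rectangle when a black square precedes it; and a black-plus or mystery square becomes a black square, a white square, or a half of a rectangle. Each move the recipe makes is on the allowed list, so $\overline{A}$ is of the form $\hat{A}$. The property I would carry into (ii) is that the recipe is essentially \emph{greedy}: scanning left to right (using that a black-plus square provisionally turned black may still be absorbed, as a left half, into a rectangle by the next square), it forms a rectangle at every position where an allowable substitution permits one.

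For (ii), the first case has $\overline{A}$ with a standalone black square in position $i$ and $A'$ with a standalone white square there; but a standalone black square of $\overline{A}$ can only come from a genuine black square or a black-plus square of $\hat{A}$ (a white-plus or mystery square always turns into a white square or the right half of a rectangle), and neither of those can be replaced by a standalone white square. The second case has $\overline{A}$ with a black then a white square in positions $i, i+1$ and $A'$ with a rectangle; here I would argue that, by greediness, a white-plus or mystery square in position $i+1$ (preceded by a black square) would already have been merged into a rectangle, and a black-plus square in position $i$ followed by that white square likewise, so positions $i, i+1$ of $\hat{A}$ must be a genuine black square and a genuine white square, which no substitution merges. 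In the third case $\overline{A}$ has a rectangle in positions $i, i+1$ and $A'$ two white squares; a rectangle of $\overline{A}$ is either a genuine rectangle of $\hat{A}$, which is never split, or was formed with a black-plus square or a genuine black square occupying position $i$ of $\hat{A}$, which cannot become a standalone white square. So in every case $A'$ fails to be of the form $\hat{A}$.

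I expect the bookkeeping in (ii) to be the main obstacle. A standalone black square of $\overline{A}$ has two possible origins in $\hat{A}$ and a rectangle of $\overline{A}$ has three, so the argument really requires tracing, position by position, which squares of $\hat{A}$ underlie each square of $\overline{A}$. The delicate point is a chain of consecutive dotted positions --- several black-plus, white-plus, and mystery squares of $\hat{A}$ sitting side by side, arising when several intervening $c_i$ equal $\nobox$ --- since then the ``black square preceding'' a given square is itself one of the earlier squares in the chain, and one must run the left-to-right recipe through the whole chain to see what happens. Establishing the greedy property cleanly is what makes every upward cover step leave the set of sashes of the form $\hat{A}$.
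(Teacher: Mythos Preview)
Your proposal is correct and follows essentially the same route as the paper: verify $\overline{A}$ is of the form $\hat{A}$, then examine each of the three upward cover relations and argue that the resulting $A'$ fails to be of the form $\hat{A}$ by determining what can occupy the relevant positions of $\hat{A}$. Your explicit use of the greedy property of the $\overline{A}$ recipe is exactly what underlies the paper's terse assertions (e.g.\ in Case~2 the paper simply states that $\hat{A}$ has a genuine black and genuine white square at positions $i,i+1$, which is precisely your greediness argument), and your inclusion of the ``genuine rectangle of $\hat{A}$'' subcase in Case~3 is a harmless addition the paper leaves implicit.
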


\begin{proof}
The sash $\overline{A}$ is of the form $\hat{A}$, because black-plus squares followed by white-plus squares, mystery squares, or white squares on $\hat{A}$ are replaced by a rectangle on $\overline{A}$, white-plus squares and mystery squares preceded by black-plus squares or black squares on $\hat{A}$ are replaced by a rectangle on $\overline{A}$, all other black-plus squares on $\hat{A}$ are replaced by black squares on $\overline{A}$, and all other white-plus squares and mystery squares on $\hat{A}$ are replaced by white squares on $\overline{A}$.

Suppose the sash $A'$ is obtained from $\overline{A}$ by going up by a cover relation.
We want to show that $A'$ is not of the form $\hat{A}$.

\textbf{Case 1:} $\overline{A} = A_1 \includesash{-1.5pt}{b} A_2$ and $A' = A_1 \includesash{-1.5pt}{w} A_2$, where the leftmost tile of $A_2$ is not a white square.\\
Let $|A_1| = i - 1$, so $\overline{A}$ has a black square in the $i^{th}$ position and $A'$ has a white square in the $i^{th}$ position.
Thus, $\hat{A}$ either has a black square or a black-plus square in the $i^{th}$ position.  
Either way, $A'$ is not of the form $\hat{A}$.

\textbf{Case 2:} $\overline{A} = A_1 \includesash{-1.5pt}{bw} A_2$ and $A' = A_1 \includesash{-1.5pt}{r} A_2$.\\
Let $|A_1| = i - 1$, so $\overline{A}$ has a black square and white square in the $i^{th}$ and $(i+1)^{st}$ positions and $A'$ has a rectangle in the $i^{th}$ and $(i+1)^{st}$ positions.
Thus, $\hat{A}$ has a black square and white square in the $i^{th}$ and $(i+1)^{st}$ positions, and $A'$ is not of the form $\hat{A}$.

\textbf{Case 3:} $\overline{A} = A_1 \includesash{-1.5pt}{r} A_2$ and $A' = A_1 \includesash{-1.5pt}{ww} A_2$.\\
Let $|A_1| = i - 1$, so $\overline{A}$ has a rectangle in the $i^{th}$ and $(i+1)^{st}$ positions and $A'$ has a white square in both the $i^{th}$ and $(i+1)^{st}$ positions.
There are five possibilities of what occupies the $i^{th}$ and $(i+1)^{st}$ positions of $\hat{A}$: $\includesash{-1.5pt}{bpw}$, $\includesash{-1.5pt}{bpwp}$, $\includesash{-1.5pt}{bpq}$, $\includesash{-1.5pt}{bwp}$, or $\includesash{-1.5pt}{bq}$.
In any case, $A'$ is not of the form $\hat{A}$.
\end{proof}

\begin{lemma}
The sash $\underline{B}$ is minimal with respect to sashes of the form $\hat{B}$.
\end{lemma}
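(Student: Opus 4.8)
The plan is to follow the template of the two preceding lemmas, which show that $\underline{A}$ and $\overline{A}$ are respectively the minimum and maximum sashes of the form $\hat{A}$. First I would dispatch the easy half: $\underline{B}$ is of the form $\hat{B}$. Indeed $\hat{B}=B$, and $\underline{B}$ is obtained from $B$ by replacing every mystery square with a black square; since ``a mystery square on $\hat{B}$ is replaced by either a black square or a white square'' is one of the allowed substitutions and no rectangle-forming substitution is used, $\underline{B}$ qualifies.

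For minimality, I would let $B'$ be a sash obtained from $\underline{B}$ by going down along a single cover relation and show that $B'$ is not of the form $\hat{B}$. By the classification of cover relations on sashes there are three cases; in each I write $|B_1|=i-1$, where $B_1$ is the part of $\underline{B}$ preceding the modified tile(s), so that the change occurs at position $i$ (and possibly $i+1$): in the first case $\underline{B}$ has a white square in position $i$ and $B'$ replaces it with a black square; in the second case $\underline{B}$ has a rectangle in positions $i,i+1$ and $B'$ replaces it with a black square followed by a white square; in the third case $\underline{B}$ has white squares in positions $i$ and $i+1$ and $B'$ replaces them with a rectangle.

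The single observation that settles all three cases is that $\underline{B}$ differs from $\hat{B}$ only in that mystery squares have been turned into black squares, so every white square and every rectangle of $\underline{B}$ occupies the same position in $\hat{B}$. In the first case $\hat{B}$ then has a white square in position $i$, so any sash of the form $\hat{B}$ has either a white square or the right half of a rectangle in position $i$, whereas $B'$ has a black square there. In the second case $\hat{B}$ has a rectangle in positions $i,i+1$; since no allowed substitution alters a rectangle already present in $\hat{B}$, any sash of the form $\hat{B}$ keeps that rectangle, contradicting the black-then-white pair of $B'$. In the third case $\hat{B}$ has white squares in both positions $i$ and $i+1$, but every rectangle-creating substitution for sashes of the form $\hat{B}$ requires a mystery square at position $i$ or position $i+1$, so $B'$ cannot have a rectangle there. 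In each case $B'$ is not of the form $\hat{B}$, so $\underline{B}$ is minimal.

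I do not anticipate a genuine obstacle: because $\hat{B}=B$ has no black-plus or white-plus squares, mystery squares are the only flexible tiles, making this a strictly simpler argument than the $\underline{A}$ lemma. The only care required is the bookkeeping identifying white squares and rectangles of $\underline{B}$ with those of $\hat{B}$, together with the observation in the third case that a rectangle of a sash of the form $\hat{B}$ can never straddle two positions that are white squares of $\hat{B}$.
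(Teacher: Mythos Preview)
Your proposal is correct and follows essentially the same approach as the paper: first verify that $\underline{B}$ is of the form $\hat{B}$ by the black-square replacement, then run through the three cover-relation cases, using in each the observation that white squares and rectangles of $\underline{B}$ must already be white squares and rectangles of $\hat{B}$. Your write-up is in fact slightly more explicit than the paper's in Case~1 (noting that a white square of $\hat{B}$ could in principle become the right half of a rectangle, but still never a black square) and in Case~3 (spelling out why no rectangle substitution is available), but the argument is the same.
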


\begin{proof}
The sash $\underline{B}$ is of the form $\hat{B}$, because every mystery square on $\hat{B}$ is replaced by a black square on $\underline{B}$.

Suppose the sash $B'$ is obtained from $\underline{B}$ by going down by a cover relation.
We want to show that $B'$ is not of the form $\hat{B}$.

\textbf{Case 1:} $B' = B_1 \includesash{-1.5pt}{b} B_2$ and $\underline{B} = B_1 \includesash{-1.5pt}{w} B_2$, where the leftmost tile of $B_2$ is not a white square.\\
Let $|B_1| = i - 1$, so $B'$ has a black square in the $i^{th}$ position and $\underline{B}$ has a white square in the $i^{th}$ position.
Thus, $\hat{B}$ has a white square in the $i^{th}$ position, and $B'$ is not of the form $\hat{B}$.

\textbf{Case 2:} $B' = B_1 \includesash{-1.5pt}{bw} B_2$ and $\underline{B} = B_1 \includesash{-1.5pt}{r} B_2$.\\
Let $|B_1| = i - 1$, so $B'$ has a black square and white square in the $i^{th}$ and $(i+1)^{st}$ positions and $\underline{B}$ has a rectangle in the $i^{th}$ and $(i+1)^{st}$ positions.
Thus, $\hat{B}$ has a rectangle in the $i^{th}$ and $(i+1)^{st}$ positions, and $B'$ is not of the form $\hat{B}$.

\textbf{Case 3:} $B' = B_1 \includesash{-1.5pt}{r} B_2$ and $\underline{B} = B_1 \includesash{-1.5pt}{ww} B_2$.\\
Let $|B_1| = i - 1$, so $B'$ has a rectangle in the $i^{th}$ and $(i+1)^{st}$ positions and $\underline{B}$ has a white square in both the $i^{th}$ and $(i+1)^{st}$ positions.
Thus, $\hat{B}$ has a white square in both the $i^{th}$ and $(i+1)^{st}$ positions, and $B'$ is not of the form $\hat{B}$.
\end{proof}

\begin{lemma}
The sash $\overline{B}$ is maximal with respect to sashes of the form $\hat{B}$.
\end{lemma}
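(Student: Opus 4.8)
The plan is to mirror the three preceding lemmas: first verify that $\overline{B}$ is itself of the form $\hat{B}$, and then show that every sash obtained from $\overline{B}$ by moving \emph{up} a single cover relation fails to be of the form $\hat{B}$. Since the cover relations of the lattice of sashes were classified in Section~\ref{objects} into exactly three types (a black square covered by a white square, adjacent black-and-white squares covered by a rectangle, and a rectangle covered by two white squares), this splits the argument into three cases, just as in the proof that $\overline{A}$ is maximal among sashes of the form $\hat{A}$.

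For the first part I would observe that the recipe producing $\overline{B}$ from $\hat{B}=B$ only ever (i) turns a mystery square into a white square, or (ii) merges a mystery square with the black square immediately preceding it into a rectangle; both of these are among the allowable substitutions in the definition of ``of the form $\hat{B}$'', so $\overline{B}$ is of the form $\hat{B}$.

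The heart of the proof is a short piece of reverse-engineering: from what occupies position $i$ (and, in the second and third cases, position $i+1$) of $\overline{B}$, determine what can occupy it in $\hat{B}$. The key fact is that when passing from $\hat{B}$ to $\overline{B}$, a black square of $\hat{B}$ either survives or becomes the left half of a rectangle, a white square survives, each half of a rectangle survives, and a mystery square becomes a white square or the right half of a rectangle. Hence a lone black square of $\overline{B}$ forces a black square of $\hat{B}$ in the same position; and a white square of $\overline{B}$ comes from a white square or a mystery square of $\hat{B}$, with the mystery-square possibility excluded whenever the preceding position of $\hat{B}$ holds a black square, since the recipe would then have produced a rectangle there. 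With this dictionary the three cases are immediate. In the first case $\overline{B}$ has a black square in position $i$ while $B'$ has a white square there, but then $\hat{B}$ has a black square in position $i$, and replacing a black square by a white square is not an allowable substitution. In the second case $\hat{B}$ has a black square in position $i$, hence (by the dictionary) a white square in position $i+1$; since neither is a mystery square, no allowable substitution can create the rectangle that $B'$ has in positions $i,i+1$. In the third case the rectangle of $\overline{B}$ in positions $i,i+1$ arose either from a rectangle of $\hat{B}$ there — whose halves admit no substitution, so $B'$ cannot have two white squares in those positions — or from a black square of $\hat{B}$ in position $i$ followed by a mystery square in position $i+1$, in which case $B'$ again puts a white square where $\hat{B}$ has a black square. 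In each case $B'$ is not of the form $\hat{B}$, so $\overline{B}$ is maximal among sashes of the form $\hat{B}$.

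I expect the only real obstacle to be the bookkeeping in the previous paragraph: correctly listing the preimages under the $\overline{B}$-recipe of a black square, a white square, and a rectangle, and, in the third case, remembering that $\overline{B}$ has a second source of rectangles, namely a black square of $\hat{B}$ immediately followed by a mystery square. No lattice-theoretic input is needed beyond the already-established classification of cover relations on sashes.
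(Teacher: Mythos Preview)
Your proposal is correct and follows essentially the same approach as the paper's proof: both first check that $\overline{B}$ is of the form $\hat{B}$, and then run through the three types of cover relations to show that any $B'$ covering $\overline{B}$ fails to be of the form $\hat{B}$, with the same case-by-case analysis of what $\hat{B}$ must contain in the relevant positions. Your ``dictionary'' paragraph in fact makes explicit a point the paper glosses over in Case~2, namely why position $i+1$ of $\hat{B}$ must be a genuine white square rather than a mystery square.
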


\begin{proof}
The sash $\overline{B}$ is of the form $\hat{B}$, because every mystery square on $\hat{B}$ is replaced by a black square on $\overline{B}$, unless it is preceded by a black square, in which case the black square and the mystery square are replaced by a rectangle on $\overline{B}$.

Suppose the sash $B'$ is obtained from $\overline{B}$ by going up by a cover relation.
We want to show that $B'$ is not of the form $\hat{B}$.

\textbf{Case 1:} $\overline{B} = B_1 \includesash{-1.5pt}{b} B_2$ and $B' = B_1 \includesash{-1.5pt}{w} B_2$, where the leftmost tile of $B_2$ is not a white square.\\
Let $|B_1| = i - 1$, so $\overline{B}$ has a black square in the $i^{th}$ position and $B'$ has a white square in the $i^{th}$ position.
Thus, $\hat{B}$ has a black square in the $i^{th}$ position, and $B'$ is not of the form $\hat{B}$.

\textbf{Case 2:} $\overline{B} = B_1 \includesash{-1.5pt}{bw} B_2$ and $B' = B_1 \includesash{-1.5pt}{r} B_2$.\\
Let $|B_1| = i - 1$, so $\overline{B}$ has a black square and white square in the $i^{th}$ and $(i+1)^{st}$ positions and $B'$ has a rectangle in the $i^{th}$ and $(i+1)^{st}$ positions.
Thus, $\hat{B}$ has a black square and white square in the $i^{th}$ and $(i+1)^{st}$ positions, and $B'$ is not of the form $\hat{B}$.

\textbf{Case 3:} $\overline{B} = B_1 \includesash{-1.5pt}{r} B_2$ and $B' = B_1 \includesash{-1.5pt}{ww} B_2$.\\
Let $|B_1| = i - 1$, so $\overline{B}$ has a rectangle in the $i^{th}$ and $(i+1)^{st}$ positions and $B'$ has a white square in both the $i^{th}$ and $(i+1)^{st}$ positions.
Thus, $\hat{B}$ either has a rectangle or a black square and mystery square in the $i^{th}$ and $(i+1)^{st}$ positions.  
Either way, $B'$ is not of the form $\hat{B}$.
\end{proof}

\begin{proposition}
\label{Aint}
If a sash $D$ is of the form $\hat{A}$, then $D \in [\underline{A}, \overline{A}]$.
\end{proposition}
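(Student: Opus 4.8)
The statement $D \in [\underline A,\overline A]$ unpacks into the two inequalities $\underline A \le D$ and $D \le \overline A$, and the plan is to prove each by exhibiting an explicit chain in the lattice of sashes: for the first, a sequence of upward steps by cover relations from $\underline A$ to $D$ with every intermediate sash again of the form $\hat A$. The key observation is that $\underline A$ and $D$ are both obtained from $\hat A$ by the substitutions allowed in the definition of ``of the form $\hat A$'', so they agree on every tile inherited from a sub-sash $c_i$ — in particular on every literal rectangle, which is copied verbatim into both — and can disagree only at the black-plus, white-plus, and mystery squares of $\hat A$, together with the tiles absorbed when such a square is resolved into a rectangle.

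First I would process these ``joints'' of $\hat A$ one at a time, from left to right, raising $\underline A$'s resolution of each joint up to $D$'s. Recall that $\underline A$ resolves a black-plus or mystery square as \includesash{-1.5pt}{b} and a white-plus square as \includesash{-1.5pt}{w}. If $D$ resolves a mystery square as \includesash{-1.5pt}{w}, this is a single step by a cover relation of the form $A\includesash{-1.5pt}{b}B \precdot A\includesash{-1.5pt}{w}B$, whose side condition — the leftmost tile of $B$ is not a white square — is supplied by the alternation requirement in the definition of an allowable dotting. If $D$ resolves a black-plus or mystery square together with the tile following it into a rectangle, I would first, if that following tile is not already a white square, promote it by a cover relation of the first kind, reaching the local pattern \includesash{-1.5pt}{bw}, and then apply the cover relation $A\includesash{-1.5pt}{bw}B \precdot A\includesash{-1.5pt}{r}B$; the intermediate sash is still of the form $\hat A$ because ``mystery square $\mapsto$ \includesash{-1.5pt}{w}'' and ``white-plus square $\mapsto$ \includesash{-1.5pt}{w}'' are both permitted substitutions. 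A white-plus or mystery square that $D$ merges with the \emph{preceding} tile is handled symmetrically. Doing this at every joint produces an upward chain witnessing $\underline A \le D$; the mirror-image argument — downward cover steps starting from $\overline A$, using the explicit description of $\overline A$ from the lemma asserting its maximality among sashes of the form $\hat A$ — gives $D \le \overline A$.

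I expect the main difficulty to be that the joints of $\hat A$ are not independent: whenever some $c_i$ is the empty sash $\nobox$ two special squares of $\hat A$ are adjacent (the ``\includesash{-1.5pt}{q}\includesash{-1.5pt}{q}'' pattern), and resolving one special square into a rectangle alters the tile its neighbour sees. So the verification that each step is a legal cover relation whose output is still of the form $\hat A$ needs to be organised carefully — a pair of adjacent special squares that $D$ collapses into one rectangle should be treated as a single step, and the strict left-to-right order guarantees that a joint is only modified after everything to its left has reached its final $D$-value. This is a longer but structurally identical version of the three-case cover-relation analyses already used to prove the minimality of $\underline A$ and the maximality of $\overline A$.
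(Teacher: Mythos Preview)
Your overall strategy is sound and close in spirit to the paper's: both arguments show that the set of sashes of the form $\hat A$ is connected by cover relations with $\underline A$ and $\overline A$ at the extremes. The paper, however, runs the argument in the opposite direction. Rather than climbing from $\underline A$ up to a fixed target $D$, it starts at $D$ and shows that whenever $D\neq\underline A$ there is \emph{some} $D'\precdot D$ still of the form $\hat A$ (and dually, whenever $D\neq\overline A$ there is some $D'\succdot D$ of the form $\hat A$); finiteness together with the minimality and maximality lemmas then gives $\underline A\le D\le\overline A$. This is slightly cleaner because one only needs to exhibit a single step, not an entire chain aimed at a prescribed sash, and the case analysis need not keep track of what has already been processed.

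There is one genuine gap in your sketch. You claim that when $D$ resolves a mystery square as \includesash{-1.5pt}{w} the cover relation $A\includesash{-1.5pt}{b}B\precdot A\includesash{-1.5pt}{w}B$ applies, and that its side condition (``the leftmost tile of $B$ is not a white square'') is supplied by the alternation requirement on allowable dottings. But the alternation requirement constrains only the \emph{dotted} positions of $C$; the first tile of the next undotted block $c_{i+2}$, which is what immediately follows the mystery square in $\hat A$, is unconstrained and can perfectly well be a white square. In that situation the passage from \includesash{-1.5pt}{bw} to \includesash{-1.5pt}{ww} is not a single cover but the two-step chain $\includesash{-1.5pt}{bw}\precdot\includesash{-1.5pt}{r}\precdot\includesash{-1.5pt}{ww}$, and you must also verify that the intermediate rectangle is a legal resolution of $\hat A$ (it is, by the substitution rule allowing a mystery square together with a following white square to be replaced by a rectangle). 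The paper's downward-from-$D$ organisation handles exactly this case by splitting on whether the tile after the white square is itself white, stepping down to \includesash{-1.5pt}{b} or to \includesash{-1.5pt}{r} accordingly. Your approach can be repaired in the same way, but the claim as written is incorrect.
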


\begin{proof}
Suppose that $D$ is of the form $\hat{A}$ and that $D \neq \underline{A}$.
We want to show that there exists a sash $D'$ such that $D' \precdot D$ and $D'$ is of the form $\hat{A}$.

\textbf{Case 1:} For some white-plus square of $\hat{A}$, it is not replaced by a white square on $D$.\\
Since $D$ is of the form $\hat{A}$, the white-plus square of $\hat{A}$ is preceded by either a black square, a black-plus square, or a mystery square and is replaced by the right half of a rectangle on $D$.
Thus $D = D_1 \includesash{-1.5pt}{r} D_2$.
Let $D' = D_1 \includesash{-1.5pt}{bw} D_2$, so that $D' \precdot D$.
In any case, $D'$ is of the form $\hat{A}$.

\textbf{Case 2:} For some black-plus square of $\hat{A}$, it is not replaced by a black square on $D$.\\
Since $D$ is of the form $\hat{A}$, the black-plus square of $\hat{A}$ is followed by either a white square, a white-plus square, or a mystery square and is replaced by the left half of a rectangle on $D$.
Thus $D = D_1 \includesash{-1.5pt}{r} D_2$.
Let $D' = D_1 \includesash{-1.5pt}{bw} D_2$, so that $D' \precdot D$.
In any case, $D'$ is of the form $\hat{A}$.

\textbf{Case 3:} For some mystery square of $\hat{A}$, it is not replaced by a black square on $D$.

\textbf{Subcase 3a:} The mystery square of $\hat{A}$ is replaced by a white square on $D$.
If $D = D_1 \includesash{-1.5pt}{w} D_2$, where the first tile of $D_2$ is not a white square, then let $D' = D_1 \includesash{-1.5pt}{b} D_2$, so that $D' \precdot D$.
If $D = D_1 \includesash{-1.5pt}{ww} D_3$, then let $D' = D_1 \includesash{-1.5pt}{r} D_3$, so that $D' \precdot D$.
Either way, the sash $D'$ is of the form $\hat{A}$.

\textbf{Subcase 3b:} The mystery square of $\hat{A}$ is replaced by the left half of a rectangle on $D$.\\
Since $D$ is of the form $\hat{A}$, the mystery square of $\hat{A}$ is followed by either a white square, a white-plus square, or another mystery square.
Thus $D = D_1 \includesash{-1.5pt}{r} D_2$.
Let $D' = D_1 \includesash{-1.5pt}{bw} D_2$, so that $D' \precdot D$.
In any case, $D'$ is of the form $\hat{A}$.

\textbf{Subcase 3c:} The mystery square of $\hat{A}$ is replaced by the right half of a rectangle on $D$.\\
Since $D$ is of the form $\hat{A}$, the mystery square of $\hat{A}$ is preceded by either a black square, a black-plus square, or another mystery square.
Thus $D = D_1 \includesash{-1.5pt}{r} D_2$.
Let $D' = D_1 \includesash{-1.5pt}{bw} D_2$, so that $D' \precdot D$.
In any case, $D'$ is of the form $\hat{A}$.

Now, suppose that $D$ is of the form $\hat{A}$ and that $D \neq \overline{A}$.
We want to show that there exists a sash $D'$ such that $D \precdot D'$ and $D'$ is of the form $\hat{A}$.
Consider a black-plus square, white-plus square, or mystery square in the $i^{th}$ position of $\hat{A}$.

\textbf{Case 1:} $D = D_1 \includesash{-1.5pt}{b} D_2$ where $|D_1| = i-1$, and the $(i+1)^{st}$ position of $D$ is not a white square.\\
Let $D' = D_1 \includesash{-1.5pt}{w} D_2$, so that $D \precdot D'$.
If the $i^{th}$ position of $\hat{A}$ is a black-plus square, then $D = \overline{A}$.
So, the $i^{th}$ position of $\hat{A}$ is a mystery square.
The sash $D'$ is of the form $\hat{A}$.

\textbf{Case 2:} $D = D_1 \includesash{-1.5pt}{bw} D_2$ where $|D_1| = i-1$.\\
Let $D' = D_1 \includesash{-1.5pt}{r} D_2$, so that $D \precdot D'$.
The $i^{th}$ position of $\hat{A}$ is either a black-plus square or a mystery square.
The $(i+1)^{st}$ position of $\hat{A}$ is either a white square, a white-plus square, or a mystery square.
Thus, the sash $D'$ is of the form $\hat{A}$.

\textbf{Case 3:} $D = D_1 \includesash{-1.5pt}{r} D_2$ where $|D_1| = i-1$.\\
Let $D' = D_1 \includesash{-1.5pt}{ww} D_2$, so that $D \precdot D'$.
The $i^{th}$ position of $\hat{A}$ is either a black-plus square or a mystery square.
Suppose that the $i^{th}$ position of $\hat{A}$ is a black-plus square.
If the $(i+1)^{st}$ position of $\hat{A}$ is either a white square, a white-plus square, or a mystery square, then $D = \overline{A}$.
If the $(i+1)^{st}$ position of $\hat{A}$ is any other object, then $D$ is not of the form $\hat{A}$.
Thus, the $i^{th}$ position of $\hat{A}$ is a mystery square, and the $(i+1)^{st}$ position of $\hat{A}$ is either a white square, a white-plus square, or a mystery square.
The sash $D'$ is of the form $\hat{A}$.

\textbf{Case 4:} $D = D_1 \includesash{-1.5pt}{r} D_2$ where $|D_1| = i-2$.\\
Let $D' = D_1 \includesash{-1.5pt}{ww} D_2$, so that $D \precdot D'$.
The $i^{th}$ position of $\hat{A}$ is either a white-plus square or a mystery square.
If the $(i-1)^{st}$ position of $\hat{A}$ is either a black square or a black-plus square, then $D = \overline{A}$.
So, the $(i-1)^{st}$ position of $\hat{A}$ is a mystery square, and the sash $D'$ is of the form $\hat{A}$.

\textbf{Case 5:} $D = D_1 \includesash{-1.5pt}{bw} D_2$ where $|D_1| = i-2$, and the $(i-1)^{st}$ position of $\hat{A}$ is a black square.\\
Let $D' = D_1 \includesash{-1.5pt}{r} D_2$, so that $D \precdot D'$.
The $i^{th}$ position of $\hat{A}$ is either a white-plus square or a mystery square.
Thus, the sash $D'$ is of the form $\hat{A}$.
\end{proof}

\begin{proposition}
\label{Bint}
If a sash $E$ is of the form $\hat{B}$, then $E \in [\underline{B}, \overline{B}]$.
\end{proposition}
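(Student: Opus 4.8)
The plan is to mimic the proof of Proposition~\ref{Aint}, specialized to the (simpler) substitution rules defining sashes of the form $\hat B$: recall that $B=\hat B$ carries no black-plus or white-plus squares, only ordinary tiles and mystery squares, and that the clause ``of the form $\hat B$'' is exactly the clause ``of the form $\hat A$'' with all cases involving plus-squares deleted. Concretely, I would establish two cover-step claims: (i) if $E$ is of the form $\hat B$ and $E\neq\underline B$, then there is a sash $E'$ with $E'\precdot E$ that is again of the form $\hat B$; and (ii) if $E$ is of the form $\hat B$ and $E\neq\overline B$, then there is a sash $E'$ with $E\precdot E'$ that is again of the form $\hat B$. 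Granting (i) and (ii), the proposition follows by the chain argument implicit in the proof of Proposition~\ref{Aint}: since the relevant lattice of sashes is finite, iterating (i) from $E$ produces a strictly descending chain of sashes of the form $\hat B$, each covering the next, which must terminate; by the contrapositive of (i) it can only terminate at $\underline B$, so $\underline B\le E$. Symmetrically, iterating (ii) gives $E\le\overline B$, and hence $E\in[\underline B,\overline B]$. (The two lemmas immediately preceding this proposition confirm that $\underline B$ and $\overline B$ are themselves of the form $\hat B$ and are extremal among such sashes, which is what makes these endpoints meaningful; they also supply the explicit descriptions of $\underline B$ and $\overline B$ used below.)

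For claim (i): since $E\neq\underline B$, some mystery square of $\hat B$ is not replaced by a black square in $E$, i.e.\ it is replaced by a white square, by the left half of a rectangle, or by the right half of a rectangle. If it becomes a white square, write $E=E_1\,\includesash{-1.5pt}{w}\,E_2$ and pass to $E'=E_1\,\includesash{-1.5pt}{b}\,E_2$ when the leftmost tile of $E_2$ is not white (first sash cover relation), or, if $E=E_1\,\includesash{-1.5pt}{ww}\,E_2'$, pass to $E'=E_1\,\includesash{-1.5pt}{r}\,E_2'$ (third cover relation); one checks the result still satisfies the ``$\hat B$'' substitution rules. If the mystery square becomes half of a rectangle, that rectangle in $E$ is $E_1\,\includesash{-1.5pt}{r}\,E_2$ whose other half, by the definition of ``of the form $\hat B$,'' corresponds to a black square or a mystery square of $\hat B$, and splitting it as $E'=E_1\,\includesash{-1.5pt}{bw}\,E_2\precdot E$ again gives a sash of the form $\hat B$. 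Claim (ii) is the mirror image: fix a black/white/mystery configuration witnessing $E\neq\overline B$, and depending on what occupies it and its neighbour in $E$, either promote a black square to a white square (first cover relation), merge $\includesash{-1.5pt}{bw}$ into a rectangle (second cover relation), or split a rectangle into $\includesash{-1.5pt}{ww}$ (third cover relation), checking in each case against the substitution list that the new sash is still of the form $\hat B$ and covers $E$; the descriptions of $\underline B$ and $\overline B$ are used to exclude the degenerate positions where no such move would be available.

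The only delicate point --- and hence what I expect to be the main obstacle --- is the bookkeeping when several mystery squares of $\hat B$ are adjacent, which genuinely occurs: consecutive $c_i=\nobox$ produce runs of mystery squares, as in the worked example where $B=\includesash{-1.5pt}{wqqqbq}$. In such a run a rectangle appearing in $E$ may straddle two neighbouring mystery squares of $\hat B$, or a mystery square may abut another mystery square rather than an ordinary tile, so one must verify that the chosen cover move still respects the clauses of the definition that merge ``a mystery square and an adjacent black/white/mystery square'' into a rectangle, rather than only the plain-tile clauses. Because $\hat B$ has no plus-squares, every configuration arising here is a special case of one already handled in the proof of Proposition~\ref{Aint}, so no genuinely new phenomenon appears; indeed the whole argument can be dispatched by the remark that it is essentially identical to the proof of Proposition~\ref{Aint} with the black-plus and white-plus cases removed.
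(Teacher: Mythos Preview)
Your proposal is correct and follows essentially the same approach as the paper: the paper also proves the two cover-step claims (i) and (ii) by case analysis on how a mystery square of $\hat B$ differs from its value in $\underline B$ (respectively $\overline B$), producing in each case an adjacent sash of the form $\hat B$ via one of the three cover relations. The only small slip is in your description of the rectangle case in claim (i)---when the mystery square becomes the \emph{left} half of a rectangle, the other half corresponds to a white square or a mystery square of $\hat B$, not a black square---but this does not affect the argument, since splitting $\includesash{-1.5pt}{r}$ into $\includesash{-1.5pt}{bw}$ yields a sash of the form $\hat B$ in either situation.
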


\begin{proof}
Suppose that $E$ is of the form $\hat{B}$ and that $E \neq \underline{B}$.
We want to show that there exists a sash $E'$ such that $E' \precdot E$ and $E'$ is of the form $\hat{B}$.
If $E \neq \underline{B}$, then for some mystery square of $\hat{B}$, the mystery square is not replaced by a black square on $E$.

\textbf{Case 1:} The mystery square in the $i^{th}$ position of $\hat{B}$ is replaced by a white square on $E$.

\textbf{Subcase 1a:} $E = E_1 \includesash{-1.5pt}{w} E_2$, where the first tile of $E_2$ is not a white square.\\
Let $E' = E_1 \includesash{-1.5pt}{b} E_2$, so that $E' \precdot E$.
The sash $E'$ is of the form $\hat{B}$.

\textbf{Subcase 1b:} $E = E_1 \includesash{-1.5pt}{ww} E_2$.\\
Let $E' = E_1 \includesash{-1.5pt}{r} E_2$, so that $E' \precdot E$, and where $i-1 = |E_1|$.
The $(i+1)^{st}$ position of $\hat{B}$ is either a white square or a mystery square.
Either way, $E'$ is of the form $\hat{B}$.

\textbf{Case 2:} The mystery square of $\hat{B}$ is replaced by the left half of a rectangle on $E$.\\
Thus the mystery square of $\hat{B}$ is followed by a white square or another mystery square, and $E = E_1 \includesash{-1.5pt}{r} E_2$.
Let $E' = E_1 \includesash{-1.5pt}{bw} E_2$, so that $E \precdot E'$.
The sash $E'$ is of the form $\hat{B}$.

\textbf{Case 3:} The mystery square of $\hat{B}$ is replaced by the right half of a rectangle on $E$.\\
Thus the mystery square of $\hat{B}$ is preceded by a black square or another mystery square, and $E = E_1 \includesash{-1.5pt}{r} E_2$.
Let $E' = E_1 \includesash{-1.5pt}{bw} E_2$, so that $E \precdot E'$.
The sash $E'$ is of the form $\hat{B}$.

Now, suppose that $E$ is of the form $\hat{B}$ and that $E \neq \overline{B}$.
We want to show that there exists a sash $E'$ such that $E \precdot E'$ and $E'$ is of the form $\hat{B}$.
Consider a mystery square in the $i^{th}$ position of $\hat{B}$.

\textbf{Case 1:} $E = E_1 \includesash{-1.5pt}{b} E_2$ where $|E_1| = i-1$, and the $(i+1)^{st}$ position of $E$ is not a white square.\\
Let $E' = E_1 \includesash{-1.5pt}{w} E_2$, so that $E \precdot E'$.
The sash $E'$ is of the form $\hat{B}$.

\textbf{Case 2:} $E = E_1 \includesash{-1.5pt}{bw} E_2$ where $|E_1| = i-1$.\\
Let $E' = E_1 \includesash{-1.5pt}{r} E_2$, so that $E \precdot E'$.
The $(i+1)^{st}$ position of $\hat{B}$ is either a white square or a mystery square.
Thus, the sash $E'$ is of the form $\hat{B}$.

\textbf{Case 3:} $E = E_1 \includesash{-1.5pt}{r} E_2$ where $|E_1| = i-1$.\\
Let $E' = E_1 \includesash{-1.5pt}{ww} E_2$, so that $E \precdot E'$.
The $(i+1)^{st}$ position of $\hat{B}$ is either a white square or a mystery square.
Thus, the sash $E'$ is of the form $\hat{B}$.

\textbf{Case 4:} $E = E_1 \includesash{-1.5pt}{r} E_2$ where $|E_1| = i-2$.\\
Let $E' = E_1 \includesash{-1.5pt}{ww} E_2$, so that $E \precdot E'$.
If the $(i-1)^{st}$ position of $\hat{B}$ is a black square, then $E = \overline{B}$.
So, the $(i-1)^{st}$ position of $\hat{B}$ is a mystery square.
Thus, the sash $E'$ is of the form $\hat{B}$.

\textbf{Case 5:} $E = E_1 \includesash{-1.5pt}{bw} E_2$ where $|E_1| = i-2$, and the $(i-1)^{st}$ position of $\hat{B}$ is a black square.\\
Let $E' = E_1 \includesash{-1.5pt}{r} E_2$, so that $E \precdot E'$.
Thus, the sash $E'$ is of the form $\hat{B}$.
\end{proof}

\begin{definition}  
Consider an allowable dotting $d$ of a sash $C \in \Sigma_{n-1}$.  
Place the numbers 1 through $n$ before, after, and in between each of the $n-1$ positions of $C$.  
Let $T$ be the set of numbers such that either the nearest dotted square to their right is a black square (or the left half of a rectangle), or the nearest dotted square to their left of a white square (or the right half of a rectangle).  
We say a set $T$ is an \emph{allowable set} for $C$ if it arises in this way from an allowable dotting of $C$.
\end{definition}

\begin{example} 
For $d = \includesash{-1.5pt}{bdrdwwbdwd}$, the allowable set for $C$ is $T= \{1, 4, 5, 6, 8\}$.
\begin{center}
\begin{picture}(131,35)
\put(0,10){\includegraphics{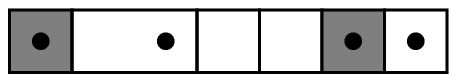}}
\put(-1.5,0){\small \underline{1}}
\put(15,0){\small 2}
\put(33,0){\small 3}
\put(52,0){\small \underline{4}}
\put(70,0){\small \underline{5}}
\put(88,0){\small \underline{6}}
\put(107,0){\small 7}
\put(125,0){\small \underline{8}}
\end{picture}
\end{center}
\end{example}

\begin{definition}
For $T$, an allowable set for $C$, we define $(C)_T$ to be the allowable dotting $d$ of $C$ such that there is a dot in the $i^{th}$ position of $d$ either if $i \in T$ and $i+1 \notin T$ or if $i+1 \in T$ and $i \notin T$.
\end{definition}

\begin{example} 
$( \includesash{-1.5pt}{brwwbw} )_{\{1, 4, 5, 6, 8\}} =  \includesash{-1.5pt}{bdrdwwbdwd}$
\end{example}

Figure~\ref{allowable sets and dottings} shows the allowable set associated with each of the allowable dottings in Figure~\ref{allowable dottings}.

\begin{figure}
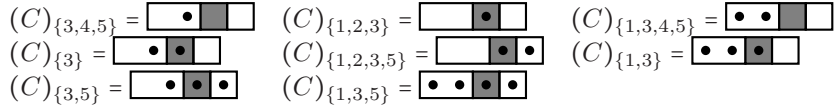

\begin{center}
\begin{tabular}{lll}
$(C)_{\{3,4,5\}} =\includesash{-1.5pt}{rdbw}$ & $(C)_{\{1,2,3\}} = \includesash{-1.5pt}{rbdw}$ & $(C)_{\{1,3,4,5\}} = \includesash{-1.5pt}{rddbw}$\\ 
$(C)_{\{3\}} = \includesash{-1.5pt}{rdbdw}$ & $(C)_{\{1,2,3,5\}} = \includesash{-1.5pt}{rbdwd}$ & $(C)_{\{1,3\}} = \includesash{-1.5pt}{rddbdw}$\\
$(C)_{\{3,5\}} = \includesash{-1.5pt}{rdbdwd}$ & $(C)_{\{1,3,5\}} = \includesash{-1.5pt}{rddbdwd}$ & \\
\end{tabular}
\end{center}
\caption{The allowable sets and allowable dottings of a sash $C$}
\label{allowable sets and dottings}
\end{figure}

We define a map $\tau$ from pairs $(C,T)$ where $C$ is a sash and $T$ is an allowable set with respect to $C$, to permutations.  
The output is a permutation where the elements of $T$ appear before the elements of $T^C$, and as we will verify in Proposition~\ref{tau}, the map $\sigma$ takes $\tau(C,T)$ to $C$.
The map $\tau$ does not necessarily output a Pell permutation.

\begin{definition}  
Let $T$ be an allowable set for a sash $C \in \Sigma_{n-1}$. 
First draw a vertical line.  
We eventually build a permutation by placing all of the elements of $T$ on the left of the vertical line, and all of the elements on $T^C$ on the right of the vertical line, and then removing the line.  
If $1 \in T$ then place a $1$ on the left of the line; otherwise, place the $1$ on the right.  
The guiding principle in defining this map is to place each number $i$ as far right as possible while still making it possible for $\sigma(\tau(C,T))$ to be $C$ and for all entries of $T$ to appear before all entries of $T^C$.
Read the sash from left to right from position 1 to position $n-1$.

Suppose $C$ has a black square in the $i^{th}$ position.
If $i+1 \in T$, then place $i+1$ immediately to the left of the vertical line.
If $i+1 \in T^C$, then place $i+1$ on the far right of the permutation.

Suppose $C$ has a white square in the $i^{th}$ position.
If $i,i+1 \in T$ or if $i,i+1 \in T^C$, then place $i+1$ immediately to the left of $i$.
If $i+1 \in T$ and $i \in T^C$, then place $i+1$ immediately to the left of the vertical line.
The case where $i \in T$ and $i+1 \in T^C$ is ruled out because if there were a dot on the $i^{th}$ position of $C$, which is a white square, then $i \in T^C$ and $i+1 \in T$.

Suppose $C$ has a rectangle in the $i^{th}$ and $(i+1)^{st}$ positions.
All of the possible dottings of the $i^{th}$ and $(i+1)^{st}$ positions of $C$ are shown in Figure~\ref{recdots}.
If $i \in T$, then $i+2 \in T$. 
Place $i+2$ immediately to the left of $i$.
There are two possibilities for placing $i+1$.
If $i+1 \in T$, then place $i+1$ immediately to the left of the vertical line, and if $i+1 \in T^C$, then place $i+1$ at the far right of the permutation.
If $i \in T^C$, then $i+1 \in T^C$.  
Place $i+1$ at the far right of the permutation.
There are two possibilities for placing $i+2$.
If $i+2 \in T$, then place $i+2$ immediately to the left of the vertical line, and if $i+2 \in T^C$, then place $i+2$ immediately to the left of $i$.

Now $\tau(C,T)$ is the permutation that results from ignoring the vertical line.
\end{definition}

\begin{figure}
\begin{center}
\begin{minipage}{.3\textwidth}
	\begin{center}
	\begin{tabular}{c}
	\begin{picture}(40,35)
		\put(0,10){\includegraphics{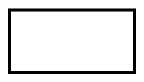}}
		\put(-3,0){\tiny $i$}
		\put(10.5,0){\tiny $i+1$}
		\put(35,0){\tiny $i+2$}
	\end{picture}\\
	$i,i+1,i+2 \in T$\\
	or $i,i+1,i+2 \in T^C$ 
	\end{tabular}
	\end{center}
\end{minipage}
\begin{minipage}{.3\textwidth}
	\begin{center}
	\begin{tabular}{c}
	\begin{picture}(40,35)
		\put(0,10){\includegraphics{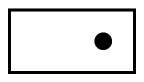}}
		\put(-3,0){\tiny $i$}
		\put(10.5,0){\tiny $i+1$}
		\put(35,0){\tiny $i+2$}
	\end{picture}\\
	$i+2 \in T$\\
	and $i,i+1 \in T^C$ 
	\end{tabular}
	\end{center}
\end{minipage}
\begin{minipage}{.3\textwidth}
	\begin{center}
	\begin{tabular}{c}
	\begin{picture}(40,35)
		\put(0,10){\includegraphics{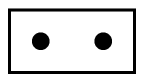}}
		\put(-3,0){\tiny $i$}
		\put(10.5,0){\tiny $i+1$}
		\put(35,0){\tiny $i+2$}
	\end{picture}\\
	$i,i+2 \in T$\\
	and $i+1 \in T^C$ 
	\end{tabular}
	\end{center}
\end{minipage}
\end{center}
\caption{Possible rectangle dottings of the $i^{th}$ and $(i+1)^{st}$ positions of $C$}
\label{recdots}
\end{figure}

\begin{example}
For $C =  \includesash{-1.5pt}{brwwbw}$ and $T= \{ 1, 4, 5, 6, 8 \}$, the following procedure computes $\tau(C,T) = 16548237$.
\begin{center}
\begin{tabular}{llccc}
$1 \in T$& $\rightarrow$ & 1 &\vdots& \\[2pt]
\includesash{-1.5pt}{b} & $\rightarrow$ & 1 &\vdots& 2 \\[2pt]
\includesash{-1.5pt}{r} & $\rightarrow$ & 14 &\vdots& 23 \\[2pt]
\includesash{-1.5pt}{w} & $\rightarrow$ & 154 &\vdots& 23 \\[2pt]
\includesash{-1.5pt}{w} & $\rightarrow$ & 1654 &\vdots& 23 \\[2pt]
\includesash{-1.5pt}{b}& $\rightarrow$ & 1654 &\vdots& 237 \\[2pt]
\includesash{-1.5pt}{w} & $\rightarrow$ & 16548 &\vdots& 237 \\[2pt]
\end{tabular}
\end{center}
\end{example}

\begin{proposition}  
\label{tau}
If $T$ is an allowable set for $C \in \Sigma_{n-1}$, then $\sigma ( \tau(C,T) ) = C$
\end{proposition}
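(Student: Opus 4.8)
The plan is to prove the identity position by position: for each $i\in[n-1]$ I would check that $\sigma(\tau(C,T))$ has the same tile as $C$ in position $i$. By the definition of $\sigma$, this reduces to reading off, from the permutation $\tau(C,T)$, the relative order of $i$ and $i+1$, together with — when $i+1$ lies to the right of $i$ — the position of $i+2$ relative to $i$ (which is what distinguishes a black square in position $i$ from the left half of a rectangle in positions $i$ and $i+1$); when position $i$ is meant to be the right half of a rectangle I would instead examine positions $i-1$, $i$, $i+1$. (Note that if $i+1$ lies to the right of $i$ then position $i$ cannot be the right half of a rectangle, so these cases do not overlap.)

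First I would record three structural features of the construction of $\tau(C,T)$. (1) The values are inserted in increasing order — $1$ is placed first, and $i+1$ is placed while position $i$ of $C$ is being read — so at the moment $i+1$ is inserted all of $1,\dots,i$ are already present; hence ``immediately to the left of the vertical line'' means ``at the current right end of the block of $T$-elements.'' (2) Consequently every element of $T$ stays to the left of the line and every element of $T^C$ to its right, so in $\tau(C,T)$ all entries of $T$ precede all entries of $T^C$. (3) For a number $m$, whether $m\in T$ is governed by the nearest dotted square to its left and to its right in the dotting $(C)_T$; combined with the alternation rule and the two prohibitions in the definition of an allowable dotting, this limits which combinations of (tile in position $i$; membership of $i$, $i+1$, $i+2$ in $T$ or $T^C$) can actually occur. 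This is the only place where the hypothesis that $T$ is an \emph{allowable} set, rather than an arbitrary subset, enters.

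With these in hand, the argument becomes a finite case analysis organized by the tile of $C$ in position $i$, matching the branches of the definition of $\tau$. If $C$ has a black square in position $i$, then by (3) either that square is undotted, so $i$ and $i+1$ lie on the same side of the line, or it is dotted, so $i\in T$ and $i+1\in T^C$; in each case the rules for $\tau$ place $i+1$ to the right of $i$, and then — reading position $i+1$ — place $i+2$ again to the right of $i$, so $\sigma$ records a black square. If $C$ has a white square in position $i$, the admissible memberships of $i-1$, $i$, $i+1$ are as listed in the definition of $\tau$; I would check that $i+1$ is placed to the left of $i$ and that we do not have $i$ to the right of $i-1$ with $i+1$ to the left of $i-1$, so that $\sigma$ records a white square rather than a right half of a rectangle. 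If $C$ has a rectangle in positions $i$ and $i+1$, the possible dottings are exactly the three shown in Figure~\ref{recdots}; in each I would check that $\tau$ places $i+2$ immediately to the left of $i$ and places $i+1$ to the right of $i$, so that the rectangle exception in $\sigma$ fires and records a rectangle in positions $i$ and $i+1$, after which the right half in position $i+1$ is automatic.

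The step I expect to be the main obstacle is keeping $\sigma$'s rectangle exception under control in both directions. When $C$ genuinely has a rectangle, one must verify that $\tau$ places $i+2$ \emph{strictly} to the left of $i$, not merely to the left of $i+1$, since only then does $\sigma$ produce a rectangle rather than an adjacent black and white square; conversely, when $C$ has a black square in position $i$ that is later followed by a white square in position $i+1$, one must verify that $\tau$ places $i+2$ to the right of $i$, so that $\sigma$ does not create a spurious rectangle. Both checks rely on the ``place each number as far to the right as possible'' design of $\tau$ together with the allowable-dotting constraints of observation~(3), and handling the right halves of rectangles — where the tile in position $i$ depends on what happened at position $i-1$ — requires the same bookkeeping carried one position further.
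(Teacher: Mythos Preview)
Your proposal is correct and follows essentially the same approach as the paper: a position-by-position case analysis on the tile of $C$ at position $i$ (black square, white square, rectangle), using the allowable-dotting constraints to rule out impossible membership patterns for $i-1,i,i+1,i+2$ in $T$ and $T^C$, and then verifying that the insertion rules of $\tau$ place these values in the relative order required by $\sigma$. The paper's proof is terser---it simply states the excluded cases and the resulting relative positions---while you spell out the structural observations (1)--(3) and flag the two-sided control of the rectangle exception, but the logical skeleton is the same.
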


\begin{proof}
Let $T$ be an allowable set for $C$.
Suppose $C$ has a black square in the $i^{th}$ position.  
Since $T$ is allowable, we cannot have $i \in T^C$ and $i+1 \in T$.  
If there is a white square in the $(i+1)^{st}$ position then we cannot have $i, i+1 \in T^C$ and $i+2 \in T$.  
For every situation $\tau(C,T)$ maps $i+1$ to the right of $i$ and does not map $i+2$ to the left of $i$, thus $\sigma ( \tau(C,T) )$ also has a black square in the $i^{th}$ position.

Suppose $C$ has a white square in the $i^{th}$ position.  
Since $T$ is allowable, we cannot have $i \in T$ and $i+1 \in T^C$.  
If there is a black square in the $(i-1)^{st}$ position then we cannot have $i-1, i \in T^C$ and $i+1 \in T$.  
For every situation $\tau(C,T)$ maps $i+1$ to the left of $i$ and, if $i-1$ is to the left of $i$, does not map $i+1$ to the left of $i-1$, thus $\sigma ( \tau(C,T) )$ also has a white square in the $i^{th}$ position.

Suppose $C$ has a rectangle in the $i^{th}$ and $(i+1)^{st}$ positions.  
Considering every possible dotting arrangement as shown in Figure~\ref{recdots}, we see $\tau(C,T)$ maps $i+1$ to the right of $i$ and $i+2$ to the left of $i$, thus $\sigma ( \tau(C,T) )$ also has a rectangle in the $i^{th}$ and $(i+1)^{st}$ positions.
\end{proof}

\begin{proposition} 
$T$ is an allowable set for a sash $C \in \Sigma_{n-1}$, $T=[n]$, or $T=\emptyset$ if and only if $T$ is good as described in Section~\ref{intro}.  
That is, the entries of $T$ are the first elements of a permutation $z' \in S_n$ where $\sigma(z') = C$.
\end{proposition}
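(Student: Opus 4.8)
The plan is to prove the two implications separately, after first translating ``good'' into a statement about $\sigma$. Since $\eta(C)\in P_n$ is the representative (minimal element) of its own congruence class, a permutation $z'\in S_n$ has $\pi_\downarrow(z')=\eta(C)$ exactly when $z'$ is equivalent to $\eta(C)$, which by Proposition~\ref{sigma_equiv} happens exactly when $\sigma(z')=\sigma(\eta(C))=C$. So ``$T$ is good with respect to $\eta(C)$'' means precisely that some $z'\in S_n$ has $\sigma(z')=C$ and $T=\{z'_1,\dots,z'_{|T|}\}$, which is the reformulation in the statement.

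For the implication that an allowable set (or $[n]$ or $\emptyset$) is good: if $T$ is an allowable set for $C$, then Proposition~\ref{tau} gives $\sigma(\tau(C,T))=C$, and the construction of $\tau$ places every element of $T$ to the left of the vertical line, so the first $|T|$ entries of $\tau(C,T)$ are exactly the elements of $T$; hence $T$ is good. For $T=[n]$ or $T=\emptyset$, take $z'=\eta(C)$, which realizes $\sigma(z')=C$ with its first $n$ (resp.\ first $0$) entries forming $[n]$ (resp.\ $\emptyset$) trivially.

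For the converse, suppose $T$ is good, witnessed by $z'$ with $\sigma(z')=C$ and $\{z'_1,\dots,z'_p\}=T$, $p=|T|$. If $p=0$ or $p=n$ we get $T=\emptyset$ or $T=[n]$, so assume $0<p<n$. I would then take $d$ to be the dotting of $C$ that dots position $i$ precisely when exactly one of $i$ and $i+1$ lies in $T$ (the dotting later called $(C)_T$), and show both that $d$ is allowable and that its associated allowable set is $T$. The key structural fact is that, since $T$ is the set of the first $p$ entries of $z'$, it is closed downward under ``occurs before in $z'$'': if $a$ occurs before $b$ in $z'$ and $b\in T$, then $a\in T$. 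Combining this with the definition of $\sigma$, one checks that every dotted position of $d$ that is a black square or the left half of a rectangle satisfies $i\in T$ and $i+1\in T^C$, every dotted position that is a white square or the right half of a rectangle satisfies $i\in T^C$ and $i+1\in T$, and $T$-membership is constant along any run of undotted positions. These give the alternation condition, and since $0<p<n$ forces some $i\in[n-1]$ with $i$ and $i+1$ on opposite sides of $T$, they give the existence of at least one dot. For the condition forbidding the two local patterns: a dot on the left half of a rectangle forces (applying the down-set property to the three values the rectangle records in $z'$) a dot on its right half, so the pattern with only the left half dotted cannot occur; and an undotted black square in position $i$ followed by a dotted white square in position $i+1$ would force $i,i+1\in T^C$ and $i+2\in T$, putting $i+2$ before $i$ in $z'$ and contradicting that position $i$ of $\sigma(z')$ is a black square. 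Finally, for a value $v$ in the $v$-th slot, the constancy of $T$-membership between $v$ and the nearest dot on each side, together with the type description above, shows that $v$ lies in the set read off from $d$ if and only if $v\in T$; hence $d$ is an allowable dotting whose allowable set is $T$, so $T$ is an allowable set for $C$.

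The step I expect to be the main obstacle is verifying the last allowability condition: one has to be confident that the two forbidden local patterns are exactly those no realizing permutation can produce, and the down-set property of $T$ in $z'$ must be used carefully to exclude a rectangle dotted on its left half alone and a dotted white square directly preceded by an undotted black square. The remaining verifications are a routine finite case analysis driven by the rule defining $\sigma$.
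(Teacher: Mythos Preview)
Your proposal is correct and follows essentially the same approach as the paper's proof: both directions are handled identically, using $\tau(C,T)$ (via Proposition~\ref{tau}) for the forward implication and, for the converse, defining the dotting $d$ by placing a dot at position $i$ exactly when one of $i,i+1$ lies in $T$, then verifying the three allowability conditions using the fact that $T$ is an initial segment of $z'$. Your explicit framing of this last fact as a ``down-set'' closure property, and your final check that the allowable set read off from $d$ recovers $T$, are slightly more explicit than the paper (which leaves the latter implicit), but the argument is the same.
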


\begin{proof}
Suppose $T=[n]$ or $T=\emptyset$, and let $z' = \eta(C)$.  
The entries of $T$ are the first elements of $z'$ such that $\sigma(z') = C$.
Suppose $T$ is an allowable set for a sash $C$, and let $z' = \tau(C,T) \in S_n$.  
The entries of $T$ are the first elements of $z'$ such that $\sigma(z') = C$.

To prove the reverse implication, suppose $z' \in S_n$ such that $\sigma(z') = C$, and let $T_j$ be the set containing the first $j$ entries of $z'$.  
If $j=0$ or $j=n$, then the proposition is true.
Assume that $0<j<n$.
Let $d$ be a dotting of $C$ such that there is a dot in the $i^{th}$ position of $C$ either if $i \in T_j$ and $i+1 \in T^C_j$ or if $i \in T^C_j$ and $i+1 \in T_j$.
We want to show that $T_j$ is an allowable set for $C$; that is, we want to verify that $d$ is an allowable dotting for $C$.  

Since $0<j<n$, there exists some $i$ such that $i \in T_j$ and $i+1 \notin T_j$, so $d$ has at least one dot.

If $i \in T_j$ and $i+1 \in T^C_j$ then $d$ has a dot in the $i^{th}$ position.
We know that entries in $T^C_j$ come after entries of $T_j$ in $z'$, so $i+1$ is to the right of $i$ in $z'$.
Thus the $i^{th}$ dot is on a black square or the left half of a rectangle of $C$.
Similarly, if $i \in T^C_j$ and $i+1 \in T_j$, then the $i^{th}$ dot of $d$ is on a white square or the right half of a rectangle of $C$.
Thus the first dot of $d$ can be on any object, and dotted positions alternate between a black square (or the left half of a rectangle) and a white square (or the right half of a rectangle).

Suppose that there is a rectangle in the $i^{th}$ and $(i+1)^{st}$ positions of $C$.
If $i \in T_j$ and $i+1 \in T^C_j$, then $i+2$ is not an element of $T^C_j$ because $i+2$ is to the left of $i$ in $z'$.
Thus $d$ has no instances of \includesash{-1.5pt}{dr}.

Suppose that there is a black square in the $i^{th}$ position of $C$ and a white square in the $(i+1)^{st}$ position of $C$.
If $i, i+1 \in T^C_j$, then $i+2$ is not an element of $T_j$ because $i+2$ is to the right of $i$ in $z'$.
Thus $d$ has no instances of \includesash{-1.5pt}{bwd}.

Therefore we have shown that $T_j$ is an allowable set for $C$.
\end{proof}

Let $z \in P_n$ and let $C \in \Sigma_{n-1}$ such that $\sigma(z) = C$.
Let $I_T$ and $J_T$ be as defined in Section~\ref{intro}.
From Theorem~\ref{coprod int} we have that the coproduct on sashes is given by:

\begin{equation}
\label{Sash_coprod1}
\Delta_S(C) = \sum_{\substack{T \text{ is allowable,} \\ T=\emptyset \text{, or } T=[n]}} \sigma(I_T)\otimes\sigma(J_T)
\end{equation}

Notice $I_\emptyset = \emptyset$, $J_\emptyset = z$, $I_{[n]} = z$, and $J_{[n]} = \emptyset$, so we have:

\begin{equation}
\label{Sash_coprod}
\Delta_{S}(C) = \emptyset \otimes C + C \otimes \emptyset + \sum_{T \text{ is allowable}} \sigma(I_T)\otimes\sigma(J_T) 
\end{equation}

From Proposition~\ref{coprod dualprod} and our discussion of the relationship between the coproduct and the dual product in Section~\ref{intro},  we see:

\begin{equation}
\label{Sash_coprod2}
\Delta_{S}(C) = \emptyset \otimes C + C \otimes \emptyset + \sum_{T \text{ is allowable}} \, \, \sum_{\substack{D \text{ and } E \text{ such that} \\ \gamma_T(D \otimes E)= C}} D \otimes E 
\end{equation}

\begin{proposition} 
\label{prop:dualprodsashes}  
Given an allowable set $T$ and a sash $C$ such that $(C)_T=d$, $D \otimes E$ is a term of $I_d \otimes J_d$ if and only if $\gamma_T(D \otimes E) =C$.
\end{proposition}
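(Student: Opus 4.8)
The plan is to reduce the statement, using the general machinery of Section~\ref{intro}, to the assertion that $\sigma(I_T) = I_d$ and $\sigma(J_T) = J_d$, and then to establish this by a direct analysis of the map $\gamma_T$. For the reduction, put $z = \eta(C)$, so that $\sigma(z) = C$ and $\pi_\downarrow(z) = z$, and let $I_T,J_T$ be the intervals of Section~\ref{intro} attached to $z$ and $T$. The proof of Theorem~\ref{Sash dualcoprod} shows $\gamma_T(D \otimes E) = \sigma\bigl((\eta(D))_T \cdot (\eta(E))_{T^C}\bigr)$, so by Proposition~\ref{sigma_equiv} we have $\gamma_T(D \otimes E) = C$ if and only if $(\eta(D))_T \cdot (\eta(E))_{T^C}$ is equivalent to $z$, that is, if and only if $\eta(D) \otimes \eta(E) \in \terms(z,T)$. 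By Proposition~\ref{coprod dualprod} (together with Theorem~\ref{coprod int}), $\terms(z,T)$ is exactly the set of terms $x \otimes y$ of $I_T \otimes J_T$ occurring in $\Delta_{\Av}(z)$; since a term appears at most once for a fixed $T$, this set is $\{x \otimes y : x \in I_T,\ y \in J_T\}$. Finally $\sigma$ restricts to an order isomorphism $P_p \to \Sigma_{p-1}$ with inverse $\eta$ (and likewise in grade $q$), so $\eta(D) \in I_T \iff D \in \sigma(I_T)$ and $\eta(E) \in J_T \iff E \in \sigma(J_T)$. Combining these equivalences,
\[\gamma_T(D \otimes E) = C \iff D \in \sigma(I_T)\ \text{and}\ E \in \sigma(J_T),\]
so the proposition reduces to the two identities $\sigma(I_T) = I_d$ and $\sigma(J_T) = J_d$ of intervals of sashes.

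To obtain these I would prove the combinatorial lemma that, for $D \in \Sigma_{p-1}$ and $E \in \Sigma_{q-1}$, one has $\gamma_T(D \otimes E) = C$ if and only if $D$ is of the form $\hat{A}$ and $E$ is of the form $\hat{B}$, where $\hat{A},\hat{B}$ are built from the allowable dotting $d = (C)_T$ as in the discussion preceding the four minimality/maximality lemmas. Granting this and combining it with the displayed equivalence, the product set $\sigma(I_T) \times \sigma(J_T)$ equals $\{D : D\ \text{of form}\ \hat{A}\} \times \{E : E\ \text{of form}\ \hat{B}\}$; all four of these sets are nonempty (for instance $\underline{A}$ is of the form $\hat{A}$ and $\underline{B}$ of the form $\hat{B}$, as recorded in the minimality lemmas), so $\sigma(I_T) = \{D : D\ \text{of form}\ \hat{A}\}$ and $\sigma(J_T) = \{E : E\ \text{of form}\ \hat{B}\}$. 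But $\sigma(I_T)$ is an interval of sashes, being the order-isomorphic image of the interval $I_T$ in $P_p$, while Proposition~\ref{Aint} together with the lemmas that $\underline{A}$ is minimal and $\overline{A}$ is maximal among sashes of the form $\hat{A}$ show that $\{D : D\ \text{of form}\ \hat{A}\}$ has minimum $\underline{A}$ and maximum $\overline{A}$; a set that is an interval and has minimum $\underline{A}$ and maximum $\overline{A}$ must equal $[\underline{A},\overline{A}] = I_d$. Likewise $\sigma(J_T) = [\underline{B},\overline{B}] = J_d$ via Proposition~\ref{Bint} and the corresponding lemmas. Hence $D \otimes E$ is a term of $I_d \otimes J_d$ if and only if $D \in I_d$ and $E \in J_d$, if and only if $\gamma_T(D \otimes E) = C$, which is the claim.

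The main obstacle is the combinatorial lemma above. I would prove it by the position-by-position comparison already used in the proof of Theorem~\ref{Sash dualcoprod}: the dot-free sub-sashes $c_1,\dots,c_{j+1}$ of $C$ cut out by $d$ are precisely the maximal stretches on which the arrows of the $\gamma_T$-construction remain inside $D$ (for one parity of the indices) or inside $E$ (for the other), while the joint positions --- those where an arrow passes from $D$ to $E$ or from $E$ to $D$, or where conditions (1)--(2) of Definition~\ref{gamma} force a rectangle --- correspond exactly to the positions occupied on $\hat{A}$ and $\hat{B}$ by black-plus, white-plus, and mystery squares. One then argues by cases on whether the $i$-th position of $\gamma_T(D \otimes E)$ is a black square, a white square, the left half of a rectangle, or the right half of a rectangle (mirroring Cases~1--4 in the proof of Theorem~\ref{Sash dualcoprod}), verifying in each case that it agrees with the $i$-th position of $C$ exactly when the relevant stretch of $D$ realizes the corresponding stretch of $\hat{A}$ under one of the allowable substitutions in the definition of ``$D$ is of the form $\hat{A}$'' (and symmetrically for $E$ and $\hat{B}$). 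The delicate points will be the interplay between conditions (1)--(2) of Definition~\ref{gamma} and the two rectangle-creating substitutions in the definition of ``form $\hat{A}$'', the degenerate cases where $c_i = \nobox$ (adjacent dots, or two dots on a single rectangle, which force a white-plus square), and the bookkeeping of left and right halves of rectangles flagged in the Note at the end of Definition~\ref{gamma} and mirrored by the parenthetical adjustments in the definitions of $C_i$, $A$, $B$, $\hat{A}$, and $\overline{A}$.
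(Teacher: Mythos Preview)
Your approach is correct but takes a genuinely different route from the paper. The paper proves the proposition directly by a two-sided case analysis: for the forward direction it assumes $D\in[\underline{A},\overline{A}]$ and $E\in[\underline{B},\overline{B}]$ and checks, in eight cases according to which of $h,h+1,h+2$ lie in $T$ or $T^C$, that the $h$-th position of $\gamma_T(D\otimes E)$ agrees with that of $C$; for the reverse direction it assumes $\gamma_T(D\otimes E)=C$ and checks (in a parallel case analysis) that $D$ is of the form $\hat{A}$ and $E$ of the form $\hat{B}$, then invokes Propositions~\ref{Aint} and~\ref{Bint}. Your strategy instead leverages the abstract lattice-theoretic machinery: you first use Theorem~\ref{coprod int} and Proposition~\ref{coprod dualprod} to identify $\{(D,E):\gamma_T(D\otimes E)=C\}$ with $\sigma(I_T)\times\sigma(J_T)$, a product of intervals in the sash lattices, and then reduce everything to the symmetric combinatorial lemma $\gamma_T(D\otimes E)=C\iff D$ of form $\hat A$ and $E$ of form $\hat B$. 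This is a nice conceptual reorganization---the condition ``of form $\hat A$'' is purely local, so both directions of your lemma are amenable to the same position-by-position check, and you get for free that $\{D:\text{form }\hat A\}$ is an interval (which the paper does not quite prove; Proposition~\ref{Aint} only gives one inclusion). The price is that the underlying casework is essentially the same volume as the paper's, and the paper's argument is self-contained whereas yours routes through the proof of Theorem~\ref{coprod int}. Note also that the case structure you describe (four cases by tile type at position $i$, as in Theorem~\ref{Sash dualcoprod}) is coarser than what the paper actually uses here (eight cases by the pattern of $T$-membership of $h,h+1,h+2$); either works, but you should expect further sub-casing once you unwind the rectangle-forcing conditions~(1)--(2) of Definition~\ref{gamma}.
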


\begin{proof}
Recall the notation $d = c_1 \bullet_1 c_2 \bullet_2 \cdots c_j \bullet_j c_{j+1}$.  
Assume that $j$ is even and that $\bullet_1$ is on a black square or the left side of a rectangle, thus $1 \in T$.  
The cases where $j$ is odd or where $\bullet_1$ is on a white square or the right side of a rectangle are identical, other than some adjustments to indices.
Let $T_{2i-1}$ be the $i^{th}$ set of consecutive integers in $T$ and let $T_{2i}$ be the $i^{th}$ set of consecutive integers in $T^{C}$.  
Thus $T = T_1 \cup T_3 \cup \dots \cup T_{j+1}$ and $T^C = T_2 \cup T_4 \cup \dots \cup T_j$, where $1 \in T_1 \subseteq T$.
Recall $A = c_1$ \includesash{-1.5pt}{q} $c_3$ \includesash{-1.5pt}{q} $\cdots$ \includesash{-1.5pt}{q} $c_{j+1}$ and $B = c_2$ \includesash{-1.5pt}{q} $c_4$ \includesash{-1.5pt}{q} $\cdots$ \includesash{-1.5pt}{q} $c_j$.  
Notice that $c_i = \sigma(\eta(C)|_{T_i})$.  

If $h,h+1 \in T$, then for some index $2i-1$ we have $h,h+1 \in T_{2i-1}$.  
Thus, the $h^{th}$ position of $C$ is in $c_{2i-1}$ and we refer to that position on $A$ as the position \emph{corresponding} to $h$.
Similarly, if $h,h+1 \in T_{2i} \subseteq T^C$, then the $h^{th}$ position of $C$ is in $c_{2i}$ and we refer to that position on $B$ as the position \emph{corresponding} to $h$.

Suppose that $D \otimes E$ is a term of $I_d \otimes J_d$, that is $D \in [\underline{A}, \overline{A}]$ and $E \in [\underline{B}, \overline{B}]$.

As we compute $\gamma_T(D \otimes E)$, we begin by labeling $D$ with the elements of $T$ and labeling $E$ with the elements of $T^C$.  
Notice that the $h^{th}$ arrow meets one of the following conditions:
\begin{enumerate}
\item If $h,h+1 \in T$, then the $h^{th}$ arrow is in the position of $D$ corresponding to $h$.
\item If $h,h+1 \in T^C$, then the $h^{th}$ arrow is in the position of $E$ corresponding to $h$.
\item If $h \in T$ and $h+1 \in T^C$, then the $h^{th}$ arrow is from $D$ to $E$. 
\item If $h \in T^C$ and $h+1 \in T$, then the $h^{th}$ arrow is from $E$ to $D$. 
\end{enumerate}

\textbf{Case 1:} $h,h+1, h+2 \in T$.\\
Whatever is in the $h^{th}$ position of $C$ is also in the position of $D$ corresponding to $h$.  
The map $\gamma_T(D \otimes E)$ places whatever is in the position of $D$ corresponding to $h$ in the $h^{th}$ position of the output.

\textbf{Case 2:} $h,h+1, h+2 \in T^C$.\\
Whatever is in the $h^{th}$ position of $C$ is also in the position of $E$ corresponding to $h$.  
The map $\gamma_T(D \otimes E)$ places whatever is in the position of $E$ corresponding to $h$ in the $h^{th}$ position of the output.

\textbf{Case 3:} $h,h+1 \in T$ and $ h+2 \in T^C$.\\
If $C$ has a rectangle in the $h^{th}$ and $(h+1)^{st}$ positions, then the dotting $d$ would have a dot in the $(h+1)^{st}$ position, which is the right half of a rectangle.
The allowable set associated with such a dotting has $h,h+1 \in T^C$ and $h+2\in T$ which is a contradiction, so $C$ does not have a rectangle in the $h^{th}$ and $(h+1)^{st}$ positions.

If whatever is in the $h^{th}$ position of $C$ is also in the position of $D$ corresponding to $h$, then $\gamma_T(D \otimes E)$ places whatever is in the position of $D$ corresponding to $h$ in the $h^{th}$ position of the output.

If $C$ has a black square in the $h^{th}$ position, then $D$ may have the left half of a rectangle in the position corresponding to $h$.
The map $\gamma_T(D \otimes E)$ places a black square in the $h^{th}$ position of the output.

\textbf{Case 4:} $h,h+1 \in T^C$ and $ h+2 \in T$.\\
If $C$ has a black square in the $h^{th}$ position, then the dotting $d$ would have an instance of \includesash{-1.5pt}{bwd}, which is a contradiction.
Thus, $C$ does not have a black square in the $h^{th}$ position.

If $C$ has a white square or the right half of a rectangle in the $h^{th}$ position, then $E$ has the same object in the position corresponding to $h$.
The map $\gamma_T(D \otimes E)$ places either a white square or the right half of a rectangle respectively in the $h^{th}$ position of the output and a white square in the $(h+1)^{st}$ position.

If $C$ has the left half of a rectangle in the $h^{th}$ position, then $E$ either has a black square or left half of a rectangle in the position corresponding to $h$.
The map $\gamma_T(D \otimes E)$ places a rectangle in the $h^{th}$ and $(h+1)^{st}$ positions of the output.

\textbf{Case 5:} $h \in T$ and $h+1, h+2 \in T^C$.\\
If $C$ has a rectangle in the $h^{th}$ and $(h+1)^{st}$ positions, then the dotting $d$ would have an instance of \includesash{-1.5pt}{dr}, which is a contradiction.
Thus, $C$ does not have a rectangle in the $h^{th}$ and $(h+1)^{st}$ positions and $C$ does have a black square in the $h^{th}$ position.
The map $\gamma_T(D \otimes E)$ places a black square in the $h^{th}$ position of the output.

\textbf{Case 6:} $h, h+2 \in T$ and $h+1 \in T^C$.\\
If $C$ has a black square in the $h^{th}$ position and a white square in the $(h+1)^{st}$ position, then $\underline{A}$ has a black square in the position between the labels $h$ and $h+2$, and $\overline{A}$ has either a black square or the left half of a rectangle in the position between the labels $h$ and $h+2$.
Thus, $D$ has either a black square or the left half of a rectangle in the position between the labels $h$ and $h+2$.
The map $\gamma_T(D \otimes E)$ places a black square in the $h^{th}$ position and a white square in the $(h+1)^{st}$ position of the output.

If $C$ has a rectangle in the $h^{th}$ and $(h+1)^{st}$ positions, then $\underline{A}$ has a white square in the position between the labels $h$ and $h+2$, and $\overline{A}$ has either a white square or the right half of a rectangle in the position between the labels $h$ and $h+2$.
Thus, $D$ has either a white square or the left half of a rectangle in the position between the labels $h$ and $h+2$.
The map $\gamma_T(D \otimes E)$ places a rectangle in the $h^{th}$ and $(h+1)^{st}$ positions of the output.

\textbf{Case 7:} $h \in T^C$ and $h+1, h+2 \in T$.\\
$C$ either has a black square or the left half of a rectangle in the $h^{th}$ position, and whatever is in the $(h+1)^{st}$ position of $C$ is also in the position of $D$ corresponding to $h+1$.
We see from cases 4 and 6 that $\gamma_T(D \otimes E)$ places either a black square or the left half of a rectangle, respectively, in the $h^{th}$ position of the output.
Also, $\gamma_T(D \otimes E)$ places whatever is in the position of $D$ corresponding to $h+1$ in the $(h+1)^{st}$ position of the output.

\textbf{Case 8:} $h, h+2 \in T^C$ and $h+1 \in T$.\\
We see from cases 4 and 6 that whatever is in the $h^{th}$ position of $C$ is also in the $h^{th}$ position of $\gamma_T(D \otimes E)$, and we see from cases 5 and 6 that whatever is in the $(h+1)^{st}$ position of $C$ is also in the $(h+1)^{st}$ position of $\gamma_T(D \otimes E)$.

Therefore we have shown that $\gamma_T(D \otimes E)=C$.

Now let us suppose $\gamma_T(D \otimes E)=C$, and we will show that $D \otimes E$ is a term of $I_d \otimes J_d$.
It is enough to show that $D$ is of the form $\hat{A}$ and that $E$ is of the form $\hat{B}$ because of Proposition~\ref{Aint} and Proposition~\ref{Bint}.
We refer to the position of $D$ or $E$ that is labeled with the $h^{th}$ arrow as the position of $D$ or $E$ corresponding to $h$.

\textbf{Case 1:} $h,h+1, h+2 \in T$.\\
The object in the position of $D$ corresponding to $h$ is the same as the object in the $h^{th}$ position of $C$, which is also the same as the object in the position of $\hat{A}$ corresponding to $h$.

\textbf{Case 2:} $h,h+1, h+2 \in T^C$.\\
The object in the position of $E$ corresponding to $h$ is the same as the object in the $h^{th}$ position of $C$, which is also the same as the object in the position of $\hat{B}$ corresponding to $h$.

\textbf{Case 3:} $h,h+1 \in T$ and $ h+2 \in T^C$.\\
As we showed above, $C$ does not have a rectangle in the $h^{th}$ and $(h+1)^{st}$ positions.  
If $D$ has the left half of a rectangle in the position corresponding to $h$, then $C$ has a black square in the $h^{th}$ position.
Thus, the sash $\hat{A}$ has a black square in the position corresponding to $h$ and a mystery square in the following position.
If $D$ has any other object in the position corresponding to $h$, then $C$ has the same object in the $h^{th}$ position and the sash $\hat{A}$ has the same object as $D$ in the position corresponding to $h$.

\textbf{Case 4:} $h,h+1 \in T^C$ and $ h+2 \in T$.\\
As we showed above, $C$ does not have a black square in the $h^{th}$ position.
If $E$ has a white square or the right half of a rectangle in the position corresponding to $h$, then $C$ has the same object as $E$ in the $h^{th}$ position and a white square in the $(h+1)^{st}$ position.
Thus, $\hat{B}$ has the same object as $E$ in the position corresponding to $h$ followed by a mystery square.
If $E$ has a black square or the left half of a rectangle in the position corresponding to $h$, then $C$ has a rectangle in the $h^{th}$ and $(h+1)^{st}$ positions.
Thus, $\hat{B}$ has a black square in the position corresponding to $h$ followed by a mystery square.

\textbf{Case 5:} $h \in T$ and $h+1, h+2 \in T^C$.\\
If there is a black square in the position of $E$ corresponding to $h+1$, then the $(h+1)^{st}$ position of $C$ is either a black square or the left half of a rectangle.
If $h+3 \in T$, then $C$ has a rectangle in the $(h+1)^{st}$ and $(h+2)^{nd}$ positions, and the sash $\hat{B}$ has a black square in the position corresponding to $h+1$ followed by a mystery square.
If $h+3 \notin T$, then $C$ has a black square in the $(h+1)^{st}$ position, and the sash $\hat{B}$ has a black square in the position corresponding to $h+1$.

If there is the left half of a rectangle in the position of $E$ corresponding to $h+1$, then the $(h+1)^{st}$ position of $C$ is the left half of a rectangle.
If $h+3 \in T$, then the sash $\hat{B}$ has a black square in the position corresponding to $h+1$ followed by a mystery square.
If $h+3 \notin T$, then the sash $\hat{B}$ has a the left half of a rectangle in the position corresponding to $h+1$.

If there is either a white square or the right half of a rectangle in the position of $E$ corresponding to $h+1$, then the $(h+1)^{st}$ position of $C$ is a white square.
The sash $\hat{B}$ has a white square in the position corresponding to $h+1$ preceded by a mystery square. 

\textbf{Case 6:} $h, h+2 \in T$ and $h+1 \in T^C$.\\
If $D$ has a black square in the position between the labels $h$ and $h+2$, then $C$ has a black square in the $h^{th}$ position and a white square in the $(h+1)^{st}$ position.
The sash $\hat{A}$ has a black-plus square in the position between the labels $h$ and $h+2$.

If $D$ has the left half of a rectangle in the position between the labels $h$ and $h+2$, then $C$ has a black square in the $h^{th}$ position and a white square in the $(h+1)^{st}$ position.
If $h+3 \in T$, then the sash $\hat{A}$ has a black-plus square in the position between the labels $h$ and $h+2$ followed by a white square.
If $h+3 \notin T$, then the sash $\hat{A}$ has a black-plus square in the position between the labels $h$ and $h+2$ followed by either a white square, a white-plus square, or a mystery square.

If $D$ has a white square in the position between the labels $h$ and $h+2$, then $C$ has a rectangle in the $h^{th}$ and $(h+1)^{st}$ positions.
The sash $\hat{A}$ has a white-plus square in the position between the labels $h$ and $h+2$.

If $D$ has the right half of a rectangle in the position between the labels $h$ and $h+2$, then $C$ has a rectangle in the $h^{th}$ and $(h+1)^{st}$ positions.
The sash $\hat{A}$ has a white-plus square in the position between the labels $h$ and $h+2$, preceded by either a black square, a black-plus square, or a mystery square..

\textbf{Case 7:} $h \in T^C$ and $h+1 \in T$.\\
This case has already been fully considered in cases 4 and 6.

We have shown, by checking every position, that $D$ is of the form $\hat{A}$ and that $E$ is of the form $\hat{B}$.  
Therefore, $D \in [\underline{A}, \overline{A}]$, $E \in [\underline{B}, \overline{B}]$, and $D \otimes E$ is a term of $I_d \otimes J_d$.
\end{proof}

Theorem~\ref{thm:coproduct} follows directly from Equation~\eqref{Sash_coprod2} and Proposition~\ref{prop:dualprodsashes}.

\section*{Acknowledgments}
This research is a compilation of a portion of the author's doctoral dissertation~\cite{Law13}.
The author would like to acknowledge Nathan Reading for giving generously of his time and sharing his experience while advising this work.
An extended abstract of this work has been published in the proceedings of the 26th International Conference on Formal Power Series and Algebraic Combinatorics (FPSAC 2016)~\cite{LawFPSAC14}.

\bibliography{sash}{}
\bibliographystyle{plain}

\end{document}